\def\lineThickness{.7}
\definecolor{lightblue}{rgb}{0.8,0.8,1.0}
\NewDocumentEnvironment{mymathenvironment}{m m O{}}{
    \begin{#1}[#3]\label{#1:#2}
}{
    \end{#1}
}
\NewDocumentEnvironment{thm}{m}{\begin{mymathenvironment}{Theorem}{#1}}{\end{mymathenvironment}}
\NewDocumentEnvironment{cor}{m}{\begin{mymathenvironment}{Corollary}{#1}}{\end{mymathenvironment}}
\NewDocumentEnvironment{conj}{m}{\begin{mymathenvironment}{Conjecture}{#1}}{\end{mymathenvironment}}
\NewDocumentEnvironment{definition}{m}{\begin{mymathenvironment}{Definition}{#1}}{\end{mymathenvironment}}
\NewDocumentEnvironment{mydef}{m}{\begin{mymathenvironment}{Definition}{#1}}{\end{mymathenvironment}}
\NewDocumentEnvironment{exa}{m}{\begin{mymathenvironment}{Example}{#1}}{\end{mymathenvironment}}
\NewDocumentEnvironment{exercise}{m}{\begin{mymathenvironment}{Exercise}{#1}}{\end{mymathenvironment}}
\NewDocumentEnvironment{lem}{m}{\begin{mymathenvironment}{Lemma}{#1}}{\end{mymathenvironment}}
\NewDocumentEnvironment{prob}{m}{\begin{mymathenvironment}{Problem}{#1}}{\end{mymathenvironment}}
\NewDocumentEnvironment{prop}{m}{\begin{mymathenvironment}{Proposition}{#1}}{\end{mymathenvironment}}
\NewDocumentEnvironment{rem}{m}{\begin{mymathenvironment}{Remark}{#1}}{\end{mymathenvironment}}
\newcommand{\myi}[1]{\item\label{Item:#1}}
\renewcommand{\qedsymbol}{$\blacksquare$}
\NewDocumentCommand\myfig{O{t} m m m}{
\begin{figure}[#1]
\centering
#4
\caption{#2}
\label{Figure:#3}
\end{figure}
}
\NewDocumentCommand\refx{mm}{#2~\ref{#2:#1}}
\NewDocumentCommand\refxi{mmm}{#3~\ref{#3:#1}~\refi{#1:#2}}
\NewDocumentCommand\refc{m}{\refx{#1}{Corollary}}
\NewDocumentCommand\refe{m}{\refx{#1}{Example}}
\NewDocumentCommand\reff{m}{\refx{#1}{Figure}}
\NewDocumentCommand\refi{m}{\eqref{Item:#1}}
\NewDocumentCommand\refl{m}{\refx{#1}{Lemma}}
\NewDocumentCommand\refp{m}{\refx{#1}{Proposition}}
\NewDocumentCommand\refpi{mm}{\refxi{#1}{#2}{Proposition}}
\NewDocumentCommand\refq{m}{\eqref{Equation:#1}}
\NewDocumentCommand\refs{m}{\refx{#1}{Section}}
\NewDocumentCommand\reft{m}{\refx{#1}{Theorem}}
\NewDocumentCommand\refti{mm}{\refxi{#1}{#2}{Theorem}}
\newcommand{\oeis}[1]{\href{http://oeis.org/#1}{#1}}
\newcommand{\abs}[1]{\left| #1 \right|}
\newcommand{\defin}[1]{\textcolor{blue}{\emph{#1}}}
\newcommand{\mathdefin}[1]{\textcolor{blue}{#1}}
\newcommand{\dn}{\downarrow}
\newcommand{\dns}{\downarrow}	
\newcommand{\dnw}{\Downarrow}	
\newcommand{\DyckPathAlgebra}{\mathbb{A}_q}
\newcommand{\setN}{\mathbb{N}}
\newcommand{\setNN}{\mathbb{N}^+}
\newcommand{\setQ}{\mathbb{Q}}
\newcommand{\setZ}{\mathbb{Z}}
\newcommand{\setC}{\mathbb{C}}
\newcommand{\xvec}{\mathbf{x}}
\newcommand{\yvec}{\mathbf{y}}
\newcommand{\xqvar}{}
\newcommand{\estep}{\texttt{e}}
\newcommand{\dstep}{\texttt{d}}
\newcommand{\nstep}{\texttt{n}}
\newcommand{\bs}{{\kern-.08em\texttt{.}\kern-.13em}} 
\newcommand{\phibar}{\phi'}
\newcommand{\edge}[2]{{#1#2}}
\newcommand{\dedge}[2]{{\overset{\to}{#1#2}}}
\newcommand{\rfun}[1]{{\rho_{#1}}}
\newcommand{\sfun}[2]{{s}}
\DeclareMathOperator{\rev}{rev}
\DeclareMathOperator{\trace}{trace}
\DeclareMathOperator{\area}{area}
\DeclareMathOperator{\bounce}{bounce}
\DeclareMathOperator{\dinv}{dinv}
\newcommand{\hrv}[2]{\hrvop(#1,#2)}
\DeclareMathOperator{\Flag}{Flag}
\DeclareMathOperator{\Frob}{Frob}
\DeclareMathOperator{\Hess}{Hess}
\DeclareMathOperator{\GL}{GL}
\DeclareMathOperator{\hrvop}{hrv} 	
\DeclareMathOperator{\wst}{wst}
\DeclareMathOperator{\length}{\ell}
\newcommand{\hrvp}{{\pi}} 
\newcommand{\hrvpp}{{\lambda}}
\newcommand{\symS}{\mathfrak{S}}
\newcommand{\field}{\setQ(q)}
\newcommand{\point}{\mathrm{pt}}
\newcommand{\elementaryE}{\mathrm{e}}
\newcommand{\completeH}{\mathrm{h}}
\newcommand{\powerSum}{\mathrm{p}}
\newcommand{\schurS}{\mathrm{s}}
\newcommand{\gessel}{\mathrm{F}}
\newcommand{\hallLittlewoodH}{\mathrm{H}}
\newcommand{\macdonaldH}{\tilde{\mathrm{H}}}
\newcommand{\LLT}{\mathrm{G}}
\newcommand{\LLTe}{\hat{\mathrm{G}}} 
\newcommand{\chrom}{\mathrm{X}}
\newcommand{\ORIENTS}{\mathcal{O}} 
\newcommand{\COL}{\mathcal{C}}     
\newcommand{\uig}{\Gamma}			
\newcommand{\DYCK}[1]{\mathcal{D}_{#1}}     
\newcommand{\SCH}[1]{\mathcal{S}_{#1}}     
\newcommand{\ASC}{\mathrm{ASC}}
\DeclareMathOperator{\asc}{asc}
\DeclareRobustCommand\beadMap[1]{\tikz[baseline]{%
\node[circle,draw,font=\small,%
inner sep=0pt,minimum size=1em,anchor=base]{\ensuremath{#1}};}}
\DeclareRobustCommand\diamondMap[1]{\tikz[baseline]{%
\node[star,star points=2,star point ratio=0.8,draw,fill=blue!15,font=\small,%
inner sep=0pt,minimum size=1em,anchor=base]{\ensuremath{#1}};}}
\DeclareRobustCommand\starMap[1]{\tikz[baseline]{%
\node[star,star points=6,star point ratio=0.8,draw,fill=green!15,font=\small,%
inner sep=0pt,minimum size=1em,anchor=base]{\ensuremath{#1}};}}
\title[A combinatorial \texorpdfstring{$e$}{e}-expansion of vertical-strip LLT polynomials]{%
A combinatorial expansion of vertical-strip LLT polynomials in the basis of elementary symmetric functions}
\author{Per Alexandersson}
\address{Dept. of Mathematics, Stockholm University, Sweden}
\email{per.w.alexandersson@gmail.com}
\author{Robin Sulzgruber}
\address{Dept. of Mathematics and Statistics, York University, Canada}
\email{robinsul@kth.se}
\keywords{LLT polynomials, e-positivity}
\subjclass[2010]{Primary~05E05; Secondary~05A19, 05E10.}
\date{September 7, 2020}
\tikzset{every picture/.append
  style={scale=1,
	baseline=(current bounding box.center),
	x=1em,
	y=1em,
	thinLine/.style={line width=\lineThickness pt},
	thickLine/.style={line width=2*\lineThickness pt,line join=round},
	snaking/.style={decorate,decoration={zigzag,segment length=1.0mm,amplitude=0.15mm}},
	fillGrey/.style={fill=black!30},
	entries/.style={xshift=-0.5em,yshift=-0.5em,font=\small},
	circled/.style={circle,draw,inner sep=0pt,minimum size=1em},
	mydot/.style={circle,fill=black,inner sep=0pt,minimum size=1mm}
	}
}
\tikzset{
	chromaticSmall/.pic = {
	code {
		\fill[color=gray!6] (-0.4,-1.2) rectangle (3.4,3.2);
		\draw[snaking] 
			(0,0)--(1,1)
			(2,2)--(3,3);
	}},
	chromSmall-ne/.pic = {
	code {
		\draw[thickLine] (1,1)--(1,2)--(2,2);
		\draw
			(1,1)node[mydot]{}
			(1,2)node[mydot]{}
			(2,2)node[mydot]{};
	}},
	chromSmall-en/.pic = {
	code {
		\draw[thickLine] (1,1)--(2,1)--(2,2);
		\draw
			(1,1)node[mydot]{}
			(2,1)node[mydot]{}
			(2,2)node[mydot]{};
	}},
	chromSmall-d/.pic = {
	code {
		\draw[thickLine] (1,1)--(2,2);
		\draw
			(1,1)node[mydot]{}
			(2,2)node[mydot]{};
	}}
	,
	chromatic/.pic = {
	code {
		\fill[color=gray!6] (-0.4,-1.2) rectangle (5.4,5.2);
		\draw[snaking] (1,2)--(3,4);
		\draw[thinLine] (3,0)--(5,2);
	}},
	chrom-swapShade/.pic = {
	code {
		\fill[color=green!20] (3,2) rectangle (3.8,4);
		\fill[color=green!20] (4.2,2) rectangle (5,4);
		\fill[color=blue!20] (1,1.2) rectangle (3,2);
		\fill[color=blue!20] (1,0) rectangle (3,0.8);
	}},
	chrom-nn/.pic = {
	code {
		\draw[dotted] (0,0)--(1,0);
		\draw[thickLine] (1,0)--(1,2);
		\draw
			(1,0)node[mydot]{}
			(1,1)node[mydot]{}
			(1,2)node[mydot]{};
	}},
	chrom-enn/.pic = {
	code {
		\draw[thickLine] (0,0)--(1,0)--(1,2);
		\draw
			(0,0)node[mydot]{}
			(1,0)node[mydot]{}
			(1,1)node[mydot]{}
			(1,2)node[mydot]{};
	}},
	chrom-nne/.pic = {
	code {
		\draw[thickLine] (0,0)--(0,2)--(1,2);
		\draw
			(0,0)node[mydot]{}
			(0,1)node[mydot]{}
			(0,2)node[mydot]{}
			(1,2)node[mydot]{};
	}},
	chrom-nen/.pic = {
	code {
		\draw[thickLine] (0,0)--(0,1)--(1,1)--(1,2);
		\draw
			(0,0)node[mydot]{}
			(0,1)node[mydot]{}
			(1,1)node[mydot]{}
			(1,2)node[mydot]{};
	}},
	chrom-nd/.pic = {
	code {
		\draw[thickLine] (0,0)--(0,1)--(1,2);
		\draw
			(0,0)node[mydot]{}
			(0,1)node[mydot]{}
			(1,2)node[mydot]{};
	}},
	chrom-dn/.pic = {
	code {
		\draw[thickLine] (0,0)--(1,1)--(1,2);
		\draw
			(0,0)node[mydot]{}
			(1,1)node[mydot]{}
			(1,2)node[mydot]{};
	}},
	chrom-nee/.pic = {
	code {
		\draw[thickLine] (3,4)--(3,5)--(5,5);
		\draw
			(3,4)node[mydot]{}
			(3,5)node[mydot]{}
			(4,5)node[mydot]{}
			(5,5)node[mydot]{};
	}},
	chrom-de/.pic = {
	code {
		\draw[thickLine] (3,4)--(4,5)--(5,5);
		\draw
			(3,4)node[mydot]{}
			(4,5)node[mydot]{}
			(5,5)node[mydot]{};
	}},
	chrom-ed/.pic = {
	code {
		\draw[thickLine] (3,4)--(4,4)--(5,5);
		\draw
			(3,4)node[mydot]{}
			(4,4)node[mydot]{}
			(5,5)node[mydot]{};
	}},
	chrom-ene/.pic = {
	code {
		\draw[thickLine] (3,4)--(4,4)--(4,5)--(5,5);
		\draw
			(3,4)node[mydot]{}
			(4,4)node[mydot]{}
			(4,5)node[mydot]{}
			(5,5)node[mydot]{};
	}},
	chrom-een/.pic = {
	code {
		\draw[thickLine] (3,4)--(5,4)--(5,5);
		\draw
			(3,4)node[mydot]{}
			(4,4)node[mydot]{}
			(5,4)node[mydot]{}
			(5,5)node[mydot]{};
	}},
	chrom-bounceLL/.pic = {
	code {
		\draw[red,thickLine,->] (4,5)--(4,1)--(0,1);
	}},
	chrom-bounceLS/.pic = {
	code {
		\draw[red,thickLine,->] (4,5)--(4,1)--(1,1);
	}},
	chrom-bounceSL/.pic = {
	code {
		\draw[red,thickLine,->] (4,4)--(4,1)--(0,1);
	}},
	chrom-bounceSS/.pic = {
	code {
		\draw[red,thickLine,->] (4,4)--(4,1)--(1,1);
	}}
}
\begin{document}

\begin{abstract}
We give a new characterization of the vertical-strip LLT polynomials $\LLT_P(\xvec;q)$ as the unique family of symmetric functions that satisfy certain combinatorial relations.
This characterization is then used to prove an explicit combinatorial expansion of vertical-strip LLT polynomials in terms of elementary symmetric functions.
Such formulas were conjectured independently by A.~Garsia et al.~and the first named author, 
and are governed by the combinatorics of orientations of unit-interval graphs.
The obtained expansion is manifestly positive if $q$ is replaced by $q+1$, thus recovering a recent result of M.~D'Adderio.
Our results are based on linear relations among LLT polynomials that arise in the work of D'Adderio, and of E.~Carlsson and A.~Mellit.
To some extent these relations are given new bijective proofs using colorings of unit-interval graphs.
As a bonus we obtain a new characterization of chromatic quasisymmetric functions of unit-interval graphs.
\end{abstract}

\maketitle

\setcounter{tocdepth}{2}
\tableofcontents

\section{Overview of main results}

LLT polynomials form a large class of symmetric functions that can be viewed as $q$-deformations of products of (skew) Schur functions.
They appear in many different contexts in representation theory and algebraic combinatorics.
In this paper we are concerned with a particular subclass of LLT polynomials, namely the vertical-strip LLT polynomials.
They contain information about the equivariant cohomology ring of 
regular semisimple Hessenberg varieties, and are integral to the study 
of diagonal harmonics and the proof of the Shuffle~Theorem.

Vertical-strip LLT polynomials can be indexed by certain lattice paths called Schr{\"o}der paths.
We view the vertical-strip LLT polynomial $\LLT_P(\xvec;q)$ associate to a Schr{\"o}der path $P$ of size $n$ as the generating function of certain colorings of a unit-interval graph $\Gamma_P$ on $n$ vertices.
The main result of this paper is an explicit expansion of $\LLT_P(\xvec;q)$ in the basis of 
elementary symmetric functions as a weighted sum over orientations of $\Gamma_P$.
This expansion is given by
\begin{eq}{mainResult}
\LLT_P(\xvec;q)
=
\sum_{\kappa \in \COL(P)} q^{\asc(\kappa)} x_{\kappa(1)} x_{\kappa(2)} \dotsm x_{\kappa(n)}
= \sum_{\theta \in \ORIENTS(P)} (q-1)^{\asc(\theta)}
\elementaryE_{\hrvpp(\theta)}(\xvec),
\end{eq}
where $\COL(P)$ is a set of colorings, $\ORIENTS(P)$ is a set of orientations,
and $\asc$ denotes certain combinatorial statistics on these objects.
The expression $\hrvpp(\theta)$ is an integer partition determined by the orientation $\theta$.
In particular, the above identity proves that $\LLT_P(\xvec;q+1)$ is $\elementaryE$-positive.
This resolves several open conjecture stated 
in \cite{AlexanderssonPanova2016,Alexandersson2019llt,GarsiaHaglundQiuRomero2019}.
Algebraically the $\elementaryE$-positivity phenomenon implies that the symmetric function in question is --- up to a twist by the sign representation --- the Frobenius character of a permutation representation of the symmetric group in which
each point stabilizer is a Young subgroup.
Our approach to vertical-strip LLT polynomials highlights their similarities with chromatic quasisymmetric functions.
The expansion in \refq{mainResult} is of particular interest because 
it serves as an analog of the Shareshian--Wachs conjecture 
regarding the $\elementaryE$-positivity of chromatic quasisymmetric functions.

\medskip

We now briefly outline the contents of this paper.
\refs{background} contains background on LLT polynomials, 
and all definitions necessary to state our main results.
In \reft{recursion} and \reft{LLT} we provide sets of relations 
which uniquely determine the vertical-strip LLT polynomials.
\reft{LLTe} states that the right-hand side in \refq{mainResult} satisfies the 
same set of relations, which implies equality in \refq{mainResult}.
\refs{bounce} treats linear relations among LLT polynomials and contains the proof of \reft{recursion}.
Moreover we give a new characterisation of the (smaller) class of unicellular LLT polynomials in \reft{dyckRecursion}.
\refs{colorings} contains bijective results on colorings and the proof of \reft{LLT}.
\refs{orientations} deals with the combinatorics of orientations, culminating in a proof of \reft{LLTe}.

In \refs{applications} we give an overview of related areas and open problems.
We only mention a few here. 
In \refc{schurExpansion} we state a new signed Schur expansion of vertical-strip LLT polynomials.
We obtain new expressions for quantities that appear in diagonal harmonics.
We consider $\nabla \elementaryE_n$ and $\nabla \omega \powerSum_n$,
which appear in the Shuffle~Theorem and the Square~Path~Theorem, respectively, and both of which can be expressed using vertical-strip LLT polynomials.
As a consequence they are both $\elementaryE$-positive after the substitution $q \to q+1$,
see \refc{shuffleCor} and \reft{squarePathsThmLLT}.
Finally we discuss chromatic quasisymmetric functions and the 
representation theory of regular semisimple Hessenberg varieties.
In particular \refc{chromaticRecursion} provides a new characterisation 
of chromatic quasisymmetric functions of unit-interval graphs.

\section{Background and main results}
\label{Section:background}

We first give additional history and references which lead up to this paper.
Then we go into more detail and give all necessary 
definitions needed in order to state the main results.

\subsection{Brief history of LLT polynomials}

Below is a very brief history of LLT polynomials --- there are
several more references which are not included with strong results.

\begin{itemize}
 \item[(1997)]  Motivated by the study of certain Fock space representations,
 and plethysm coefficients, A.~Lascoux, B.~Leclerc and J.~Y.~Thibon 
 introduce the LLT polynomials in \cite{Lascoux97ribbontableaux} 
 under the name \defin{ribbon Schur functions}.
 They are defined as a sum over semi-standard border-strip tableaux
 of straight shape.
 \item[(2000)] Three years later B.~Leclerc and J.~Y.~Thibon \cite{LeclercThibon2000}
 show that the LLT polynomials are Schur positive by means of representation theory.
 
 \item[(2000+)] M.~Bylund and M.~Haiman  use the so called \emph{Littlewood map} (see history in \cite[p.~92]{qtCatalanBook})
 which is a bijection between semi-standard border-strip tableaux
 and $k$-tuples of semi-standard Young tableaux of straight shape.
 They extend their model to include tuples of skew shapes,
 and they conjecture that LLT polynomials in this family are also Schur positive. 
 Note that the proof of Schur positivity from 2000 does not extend to the Bylund--Haiman model.
 We remark that a similar model was introduced independently 
 in \cite{SchillingShimozonoWhite2003}, but 
 they use a more complicated statistic for the $q$-weight.

 \item[(2005)] J.~Haglund, M.~Haiman and M.~Loehr \cite{HaglundHaimanLoehr2005}
 give a combinatorial formula for the \defin{modified Macdonald polynomials}.
 They show that these Macdonald polynomials can be expressed as positive linear combinations
 of LLT polynomials indexed by $k$-tuples of ribbon shapes (these are skew shapes).
 Hence a deeper understanding of LLT polynomials has consequences for the modified Macdonald polynomials.
 
 \item[(2006)] I.~Grojnowski and M.~Haiman post a preprint \cite{GrojnowskiHaiman2006},
 giving an argument for Schur positivity of LLT polynomials by using Kazhdan--Luztig theory
 and  geometric representation theory.
 
 \item[(2007+)] J.~Haglund~\cite[Ch.~6]{qtCatalanBook}
 gives an overview of the appearance of LLT polynomials in 
 the study of \defin{diagonal harmonics}.
 In particular, the \defin{path symmetric functions} ---
 which are LLT polynomials indexed by $k$-tuples of vertical strips ---
 appear in the \emph{Shuffle Conjecture}~\cite{HaglundHaimanLoehrRemmelUlyanov2005} and later in the 
 \emph{Compositional Shuffle Conjecture}~\cite{HaglundMorseZabrocki2012},
 \emph{Square Path Conjecture}~\cite{Sergel2017} and 
 \emph{Delta Conjecture}~\cite{HaglundRemmelWilson2018} .
 
 \item[(2016)] In his work on chromatic quasisymmetric 
 functions M.~Guay-Paquet~\cite{GuayPaquet2016x} uses a Hopf~algebra approach 
 to show that unicellular LLT polynomials are graded Frobenius series derived from the equivariant 
 cohomology rings of regular semisimple Hessenberg varieties.  This yields a second 
 proof that unicellular LLT polynomials are Schur positive, 
 but no combinatorial formula for the coefficients.
 
 \item[(2017)]  
 E.~Carlsson and A.~Mellit~\cite{CarlssonMellit2017}
 use the so called zeta map on the path symmetric functions to obtain a more convenient model for vertical-strip LLT polynomials.
 They introduce the \defin{Dyck path algebra} and prove the Compositional 
 Shuffle Conjecture --- henceforth Shuffle Theorem.
 
 E.~Carlsson and A.~Mellit briefly use a plethystic identity 
 originating in \cite{HaglundHaimanLoehr2005}. 
 In the language of Haglund et al.\ it essentially states that 
 there is a simple plethystic relationship between general 
 fillings and non-attacking fillings.
 In terms of vertex colorings of graphs this relates a sum over arbitrary colorings to a sum over proper colorings. 
 Consequently there is a plethystic relationship 
 between certain (unicellular) LLT polynomials and the chromatic quasisymmetric functions 
 of J.~Shareshian and M.~Wachs~\cite{ShareshianWachs2012,ShareshianWachs2016}. 
  
 \item[(2018)] The Carlsson--Mellit model is 
 used by G.~Panova and the first named author~\cite{AlexanderssonPanova2016}
 to emphasize the connection with the chromatic quasisymmetric functions.
 A conjecture regarding $\elementaryE$-positivity of certain LLT polynomials is stated in \cite[Conj.~4.5]{AlexanderssonPanova2016}.
 This conjecture serves as an analog of the $\elementaryE$-positivity conjecture of 
 Shareshian--Wachs, and thus the Stanley--Stembridge 
 conjecture for chromatic symmetric functions~\cite{Stanley1995,StanleyStembridge1993}.

 \item[(2019)] The first named author posts a preprint~\cite{Alexandersson2019llt} 
 that contains the explicit formula \refq{mainResult} for vertical-strip LLT polynomials as a conjecture.
 Shortly after, A.~Garsia, J.~Haglund, D.~Qiu and M.~Romero~\cite{GarsiaHaglundQiuRomero2019}
 independently publish an equivalent conjecture in the language of diagonal harmonics.

 \item[(2019)] M.~D'Adderio~\cite{DAdderio2020} proves $\elementaryE$-positivity 
 by using the Dyck path algebra and recursions developed by Carlsson--Mellit. 
 However, this does not prove the conjectured formula \refq{mainResult}.
\end{itemize}

\subsection{Symmetric functions}
Let $\mathdefin{\Lambda}$ denote the algebra of symmetric functions over $\field$.
For an introduction to symmetric functions the reader is referred to \cite{Macdonald1996,StanleyEC2}.
Elements of $\Lambda$ may be viewed as formal power series over $\field$ in infinitely many 
variables $\mathdefin{\xvec}\coloneqq (x_1,x_2,\dotsc)$ that have finite degree and are invariant 
under permutation of the variables.
Alternatively we may think of elements of $\Lambda$ as $\field$-linear 
combinations of the \defin{elementary symmetric functions} 
$\mathdefin{\elementaryE_{\lambda}}=\elementaryE_{\lambda_1}\dotsb\elementaryE_{\lambda_r}$ 
where $\lambda$ ranges over all integer partitions.

\subsection{Schr{\"o}der paths}
Let $\mathdefin{\texttt{n}}\coloneqq (0,1)$ be a \defin{north step}, 
$\mathdefin{\texttt{e}}\coloneqq (1,0)$ be an \defin{east step}, 
and $\mathdefin{\texttt{d}}\coloneqq (1,1)$ be a \defin{diagonal step}.
A \defin{Schr{\"o}der path} $P$ of \defin{size} $n$ 
is a lattice path from $(0,0)$ to $(n,n)$ using steps 
from $\{\texttt{n},\texttt{e},\texttt{d}\}$ that never goes below the main diagonal, and has no diagonal step on the main diagonal.
That is, every east step and every diagonal step of $P$ is preceded by more north steps than east steps.
Let $\mathdefin{\SCH{n}}$ denote the set of Schr{\"o}der paths of size $n$, 
and let $\mathdefin{\SCH{}}$ be the set of all Schr{\"o}der paths.
We describe Schr{\"o}der paths as words in $\{\nstep,\dstep,\estep\}^*$,
see \reff{schroederPaths} for examples.
A Schr{\"o}der path without diagonal steps is called a \defin{Dyck path}.

\myfig[ht]{
The set $\SCH{3}=\{\nstep\nstep\nstep\estep\estep\estep,\nstep\nstep\estep\nstep\estep\estep,\dotsc,\nstep\dstep\dstep\estep\}$ consisting of the eleven Schr{\"o}der paths of size $3$.
The first row consists of all Dyck paths.
The number of Schr{\"o}der paths of size $n$ is given by the sequence \oeis{A001003},
and starts with $1$, $3$, $11$, $45$, $197,\dotsc$.
}{schroederPaths}
{
\includegraphics[scale=1]{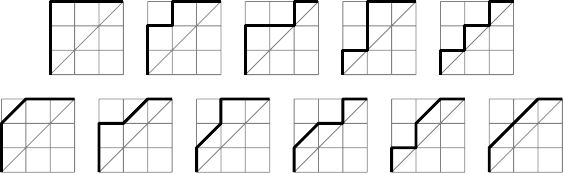}
}

\subsection{Bounce paths}

An important tool in our analysis is a type of bounce path 
that is similar to bounce paths that appear 
for example in \cite{GarsiaHaglund2000,AndrewsKrattenthalerOrsinaPapi2002}.

Given a Schr{\"o}der path $P\in\SCH{}$ and a point $(u_1,u_0)\in\setZ^2$ that lies on $P$ define the \defin{(partial reverse) bounce path} of $P$ at $(u_1,u_0)$ as follows:
Starting at $(u_1,u_0)$ move south until you reach the point $(u_1,u_1)$ on the main diagonal.
Now move west until you reach a 
point $(u_2,u_1)$ 
on $P$.
If this point lies between two diagonal steps of $P$ then continue south until the point $(u_2,u_2)$ on the diagonal.
Then move west until you reach a
point $(u_3,u_2)$ 
on $P$.
Continue in this fashion until the bounce path ends at a point incident to a north step or east step of $P$.
An example of a bounce path is shown in \reff{bounce_path_example}.
The points $(u_1,u_1),\dotsc,(u_k,u_k)$ where the bounce path touches the main diagonal are called \defin{bounce points}.
The coordinates of the peaks $(u_1,u_0),(u_2,u_1),\dotsc,(u_{k+1},u_k)$ of the bounce path are recorded in the \defin{bounce partition} $(u_0,u_1,\dotsc,u_{k+1})$ of $P$ at $(u_1,u_0)$.
There is a unique decomposition
\begin{eq*}
P=Us_1 \bs s_2Vs_3 \bs s_4 W
\end{eq*}
where $s_1,s_2,s_3,s_4\in\{\nstep,\dstep,\estep\}$ and $U,V,W\in\{\nstep,\dstep,\estep\}^{*}$ such that $Us_1$ is a path from $(0,0)$ to the endpoint $(u_{k+1},u_k)$ of the bounce path, and $s_4W$ is a path from the starting point $(u_1,u_0)$ of the bounce path to $(n,n)$.
We call this the \defin{bounce decomposition} of $P$ at $(u_1,u_0)$.
Note that we use a small dot to indicate the starting point and endpoint of the bounce path.
In this paper our focus is mainly on the special case where 
$s_1s_2 \in\{\nstep\nstep,\dstep\nstep\}$, $V\in\{\nstep,\dstep\}^*$ and $s_3s_4=\dstep\estep$.
The pieces that such bounce paths are made of are found in \reff{bounce_path}.

\myfig[ht]
{The bounce paths of two Schr{\"o}der paths $P$ (left) and $Q$ (right) at the point $(3,6)$.
The bounce decompositions are given by $P=U\nstep\bs\nstep V\dstep\bs\estep W$ where $U=\emptyset, V=\dstep\dstep\nstep$, and $W=\estep\estep$, and $Q=U'\nstep\bs\dstep V'\dstep\bs\estep W'$ where $U'=\nstep\dstep, V'=\nstep$, and $W'=\estep\estep$ respectively.
The left bounce path has two bounce points $(3,3)$ and $(1,1)$, and bounce partition $(6,3,1,0)$.}
{bounce_path_example}
{
\begin{tikzpicture}
\begin{scope}
\draw[gray,step=1em](0,0)grid(6,6)(0,0)--(6,6);
\draw[thickLine](0,0)--(0,2)--(2,4)--(2,5)--(3,6)--(6,6);
\draw[red,thickLine,->](3,6)--(3,3)--(1,3)--(1,1)--(0,1);
\end{scope}
\begin{scope}[xshift=10em]
\draw[gray,step=1em](0,0)grid(6,6)(0,0)--(6,6);
\draw[thickLine](0,0)--(0,1)--(1,2)--(1,3)--(2,4)--(2,5)--(3,6)--(6,6);
\draw[red,thickLine,->](3,6)--(3,3)--(1,3);
\end{scope}
\end{tikzpicture}
}

\myfig[ht]
{The start (left), middle (middle), and end (right) of the bounce paths we are mainly interested in.}{bounce_path}{
\begin{tikzpicture}
\begin{scope}
\begin{scope}
\draw[step=1em,gray](0,0)grid(2,3);
\draw[red,thickLine,->](1,3)--(1,1);
\draw[thickLine](0,2)--(1,3)--(2,3);
\draw
(0,2)node[mydot]{}
(1,3)node[mydot]{}
(2,3)node[mydot]{};
\end{scope}
%
\end{scope}
\begin{scope}[xshift=6em,yshift=-.5em]
\begin{scope}
\draw[step=1em,gray](0,0)grid(4,4)
(1,0)--(4,3);
\draw[red,thickLine,->](3,4)--(3,2)--(1,2);
\end{scope}
\begin{scope}[xshift=5em,yshift=0em]
\draw[step=1em,gray](0,0)grid(3,4);
\draw[red,thickLine,->](3,3)--(1,3)--(1,1);
\draw[thickLine](0,2)--(2,4);
\draw
(0,2)node[mydot]{}
(1,3)node[mydot]{}
(2,4)node[mydot]{};
\end{scope}
\end{scope}
\begin{scope}[xshift=18em,yshift=.5em]
\begin{scope}
\draw[step=1em,gray](0,0)grid(2,2);
\draw[thickLine](0,0)--(0,2);
\draw[red,thickLine,->](2,1)--(0,1);
\draw
(0,0)node[mydot]{}
(0,1)node[mydot]{}
(0,2)node[mydot]{};
\end{scope}
\begin{scope}[xshift=3em,yshift=0em]
\draw[step=1em,gray](0,0)grid(2,2);
\draw[thickLine](0,0)--(1,1)--(1,2);
\draw[red,thickLine,->](2,1)--(1,1);
\draw
(0,0)node[mydot]{}
(1,1)node[mydot]{}
(1,2)node[mydot]{};
\end{scope}
%
\end{scope}
\end{tikzpicture}
}

\subsection{The main recursion}

An essential step in M.~D'Adderio's proof of the $\elementaryE$-positivity of 
vertical-strip LLT polynomials is an inductive argument that relies on certain linear relations among 
them.
The starting point of our work is to simplify these relations, and to make them more explicit.
This leads to our first main result, namely, that the following initial conditions 
and relations determine a unique symmetric-function valued statistic on Schr{\"o}der paths.

\begin{thm}{recursion}
Let $A$ be a $K$-algebra that contains $\Lambda$.
Then there exists a unique function $F:\SCH{}\to A$, $P\mapsto F_P\xqvar$ that satisfies the following four conditions:
\begin{enumerate}[(i)]
\myi{recursion:e}
For all $k\in\setN$ the initial condition
$F_{\nstep\dstep^k\estep}\xqvar
=
\elementaryE_{k+1}
$
is satisfied.
\myi{recursion:mult}
The function $F$ is multiplicative, that is,
$F_{PQ}\xqvar
=
F_{P}\xqvar
F_{Q}\xqvar
$
for all $P,Q\in\SCH{}$.

\myi{recursion:unicellular}
For all $U,V\in\{\nstep,\dstep,\estep\}^*$ such that $U\dstep V\in\SCH{}$ we have
$
F_{U\nstep\estep V}\xqvar - F_{U\estep\nstep V}\xqvar
=
(q-1)F_{U\dstep V}\xqvar
$.

\myi{recursion:bounce}
Let $P\in\SCH{}$ be a Schr{\"o}der path, and let $(x,z)\in\setZ^2$ be a point on $P$ with $x+1<z$ such that the bounce path of $P$ at $(x,z)$ has \emph{only one} bounce point, and such that the bounce decomposition of $P$ at $(x,z)$ is given by 
$P=Us \bs tV\dstep \bs \estep W$
for some $V\in\{\nstep,\dstep\}^*$ and some $st\in\{\nstep\nstep,\dstep\nstep
\}$.
Then
\begin{eq*}
F_{P}\xqvar
=
\begin{cases}
qF_{U\nstep\nstep V\estep\dstep W}\xqvar
&\quad\text{if }st=\nstep\nstep,
\\
F_{U\nstep\dstep V\estep\dstep W}\xqvar
&\quad\text{if }st=\dstep\nstep
.
\end{cases}
\end{eq*}
\end{enumerate}
\end{thm}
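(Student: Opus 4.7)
The plan is to establish uniqueness by strong induction on the pair $(n, a)$ in lexicographic order, where $n$ is the size of $P \in \SCH{}$ and $a$ is the area enclosed between $P$ and the main diagonal; existence will be supplied later by exhibiting the vertical-strip LLT polynomials as a witness. The goal is to show that for every $P$, one of the four conditions expresses $F_P$ as a $\field$-linear combination of values $F_Q$ for paths $Q$ of strictly smaller complexity, or matches the initial case \refi{recursion:e}.

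The easy structural cases go first. If $P$ touches the main diagonal in its interior, then $P = P_1 P_2$ non-trivially and multiplicativity \refi{recursion:mult} reduces $F_P$ to $F_{P_1} F_{P_2}$, with both factors of strictly smaller size. Connected paths of the form $\nstep\dstep^k\estep$ are fixed by \refi{recursion:e}. For any other connected $P$ that contains a factor $\nstep\estep$, connectedness ensures that the intermediate lattice point is strictly above the diagonal, so $U\dstep V$ is itself a Schröder path and \refi{recursion:unicellular} rewrites $F_P$ as $F_{U\estep\nstep V} + (q-1)F_{U\dstep V}$. Here $U\estep\nstep V$ has the same size as $P$ but strictly smaller area (a valley has been replaced by a peak) and $U\dstep V$ has strictly smaller size, so the inductive hypothesis applies to both terms.

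The main obstacle, and the combinatorial heart of the proof, is the remaining case: $P$ is connected, not of the form $\nstep\dstep^k\estep$, and contains no $\nstep\estep$ factor. Here one must exhibit a point $(x, z)$ on $P$ satisfying all of the hypotheses of \refi{recursion:bounce} --- namely $x+1 < z$, a bounce path with a single bounce point, and a decomposition $P = Us \bs tV\dstep \bs \estep W$ with $V \in \{\nstep, \dstep\}^*$ and $st \in \{\nstep\nstep, \dstep\nstep\}$. Granting this, \refi{recursion:bounce} swaps the $\dstep\estep$ at the end of the middle block for $\estep\dstep$, producing a path of the same size as $P$ but strictly smaller area, closing the induction. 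Since $P$ contains no $\nstep\estep$, every $\nstep$ in $P$ is followed by $\nstep$ or $\dstep$, and the hypothesis $P \neq \nstep\dstep^k\estep$ then forces the interior of $P$ to contain a $\dstep\estep$ corner. A natural candidate for $(x,z)$ is the peak of such a corner, chosen far enough to the left that the bounce path from $(x,z)$ reaches the diagonal only once before hitting an $\nstep$-step followed by $\nstep$ or $\dstep$. Verifying that this candidate indeed meets all the conditions of \refi{recursion:bounce} --- in particular the shape constraint $V\in\{\nstep,\dstep\}^*$ and the strict inequality $x+1<z$ --- requires a careful case analysis of the possible configurations of $P$ along and above the bounce trajectory, and this is where the main combinatorial effort lies.
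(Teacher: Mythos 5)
Your reduction strategy breaks down in exactly the case you identify as the ``combinatorial heart'': for a connected $P$ with no $\nstep\estep$ factor and $P\neq\nstep\dstep^k\estep$, it is in general \emph{not} possible to find a point $(x,z)$ meeting all the hypotheses of condition~\refi{recursion:bounce}. The bounce path is completely determined by $P$ and the chosen peak, so there is nothing to ``choose far enough to the left'', and the only admissible starting points are peaks of $\dstep\estep$ corners with $x+1<z$. Take $P=\nstep\nstep\dstep\dstep\dstep\estep\dstep\estep\in\SCH{6}$: it is connected, contains no $\nstep\estep$, and is not of the form $\nstep\dstep^k\estep$, and its only $\dstep\estep$ peaks are $(3,5)$ and $(5,6)$. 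The peak $(5,6)$ fails $x+1<z$. At $(3,5)$ the bounce path runs $(3,5)\to(3,3)\to(1,3)$, and since $(1,3)$ lies between two diagonal steps it continues to $(1,1)\to(0,1)$; hence it has two bounce points and \refi{recursion:bounce} gives no relation. Conditions \refi{recursion:e} and \refi{recursion:mult} do not apply, and \refi{recursion:unicellular} can only be used in the size-increasing direction (splitting a diagonal step), which your lexicographic induction on $(n,\area)$ forbids. So your induction has no valid step at this $P$, and the deferred ``careful case analysis'' cannot succeed as stated because the asserted candidate point need not exist. (A small additional slip: in your unicellular case the relevant requirement is that the lattice point \emph{preceding} the $\nstep\estep$ factor lies strictly above the diagonal, which is what connectedness actually gives; and the peak is replaced by a valley, not the other way around.)

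This is precisely why the paper does not prove uniqueness by a single application of \refi{recursion:bounce}. It first establishes \refp{equivalentRelations} and, crucially, \reft{bounce_reduction}, which upgrade the single-bounce-point relations in \refi{recursion:bounce} (together with the unicellular relation) to bounce relations with \emph{arbitrarily many} bounce points and with the additional ending pattern $st=\nstep\dstep$; the proof of \reft{bounce_reduction} is an induction on the number of bounce points in which the unicellular relation is applied ``backwards'' to split a diagonal step at a bounce height, the one-bounce-point relation is applied, and the induction hypothesis is used on the shortened bounce path. Only with these strengthened relations in hand does the uniqueness proof go through, via D'Adderio's triple induction on the size, the number of east steps, and the position $x+z$ of the first east step (your measure $(n,\area)$ could plausibly be substituted at that stage). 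The missing ingredient in your proposal is therefore not a finer case analysis of where to place $(x,z)$, but an argument deriving multi-bounce-point relations from the postulated single-bounce-point ones.
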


Parts of this recursion are not new and have shown up in various forms and shapes in previous research.
Due to the plethystic relationship between
unicellular LLT polynomials and chromatic quasisymmetric function,
some of them have analogs on the chromatic quasisymmetric
function side.

\begin{rem}{unicellular}
We call the relation in \refti{recursion}{unicellular} the \defin{unicellular relation}.
It can be illustrated by
\begin{eq}{unicellular_diagram}
\begin{tikzpicture}[scale=0.5]
\draw pic {chromaticSmall};
\draw pic {chromSmall-ne};
\node at (3,-1.1) {$U{\nstep\estep}V$};
\end{tikzpicture}
\quad
-
\quad
\begin{tikzpicture}[scale=0.5]
\draw pic {chromaticSmall};
\draw pic {chromSmall-en};
\node at (3,-1.1) {$U{\estep\nstep}V$};
\end{tikzpicture}
\quad
=
\quad
(q-1)\times
\begin{tikzpicture}[scale=0.5]
\draw pic {chromaticSmall};
\draw pic {chromSmall-d};
\node at (3,-1.1) {$U{\dstep}V$};
\end{tikzpicture}
\end{eq}
and has appeared for example in 
\cite{AlexanderssonPanova2016,Alexandersson2019llt,DAdderio2020}.
It can be used to express the function $F_P$ indexed by a Schr{\"o}der path as a 
linear combination of such functions indexed by Dyck paths.
Equivalently it can be used to express vertical-strip LLT polynomials 
as linear combinations of unicellular LLT polynomials.
Moreover it is reflected in the definition of the operator $\varphi$ in terms of the operators $d_{+}$ and $d_{-}$ in the Dyck path algebra of E.~Carlsson and A.~Mellit~\cite{CarlssonMellit2017}. 
\end{rem}

\begin{rem}{bounce_relations}
We call relations of the type of \refti{recursion}{bounce} \defin{bounce relations}.
The two bounce relations in \refi{recursion:bounce}
can be illustrated by
\begin{align}
\begin{tikzpicture}[scale=0.5]
\draw pic {chromatic};
\draw pic {chrom-bounceLS};
\draw pic {chrom-nn};
\draw pic {chrom-de};
\node at (5,-1.1) {$U{\nstep\bs\nstep}V{\dstep\bs\estep}W$};
\end{tikzpicture}
\quad&=\quad
q\times
\begin{tikzpicture}[scale=0.5]
\draw pic {chromatic};
\draw pic {chrom-bounceSS};
\draw pic {chrom-nn};
\draw pic {chrom-ed};
\node at (5,-1.1) {$U{\nstep\bs\nstep}V{\estep\bs\dstep}W$};
\end{tikzpicture}
& \label{Equation:LLT_caseI_diagram}
\\
\begin{tikzpicture}[scale=0.5]
\draw pic {chromatic};
\draw pic {chrom-bounceLS};
\draw pic {chrom-dn};
\draw pic {chrom-de};
\node at (5,-1.1) {$U{\dstep\bs\nstep}V{\dstep\bs\estep}W$};
\end{tikzpicture}
\quad&=\quad
\begin{tikzpicture}[scale=0.5]
\draw pic {chromatic};
\draw pic {chrom-bounceSL};
\draw pic {chrom-nd};
\draw pic {chrom-ed};
\node at (5,-1.1) {$U{\nstep\bs\dstep}V{\estep\bs\dstep}W$};
\end{tikzpicture} \label{Equation:LLT_caseII_diagram}
\end{align}
Both of these relations are implicit in the work of M.~D'Adderio \cite[Lem.~5.2]{DAdderio2020} who derives them from commutation relations between operators in the Dyck path algebra.
The relation in~\refq{LLT_caseI_diagram} is equivalent to \refq{dyckBounceI} 
further down. To the best of the authors' knowledge, 
\refq{dyckBounceI} was first described in the context of chromatic quasisymmetric 
functions by M.~Guay-Paquet~\cite[Prop.~3.1]{GuayPaquet2013} 
where it is called the \defin{modular relation}.
It was independently found on the LLT side under the 
name \defin{local linear relation} by S.~J.~Lee \cite[Thm.~3.4]{Lee2018} who 
worked in the setting of \defin{abelian} Dyck paths.
Subsequently it appears in~\cite{Alexandersson2019llt,Miller2019}.
Moreover in \cite[Thm.~3.1]{HuhNamYoo2020} Lee's linear relation on LLT polynomials is translated to chromatic symmetric functions.
\end{rem}

\reft{recursion} is formulated in such a way that it only assumes the weakest set 
bounce relations that are necessary to imply uniqueness.
In \refs{bounce} we will see that these relations together with the 
unicellular relation are strong enough to imply more general bounce 
relations with arbitrarily many bounce points and bounce paths that may end in different patterns.
Moreover \reft{recursion} remains valid if we allow $V\in\{\nstep,\dstep,\estep\}^*$.

We prove the uniqueness statement in \reft{recursion} in \refs{bounce} along with other results of a similar flavour.
The existence statement is taken care of in Theorems~\ref{Theorem:LLT} and~\ref{Theorem:LLTe}.
There we show 
by combinatorial means that there exist \emph{two} statistics on Schr{\"o}der 
paths that satisfy the conditions of \reft{recursion}.
We then conclude that they must be equal.
It turns out that the unique function $F$ in \reft{recursion} assigns to 
each Schr{\"o}der path the corresponding vertical-strip LLT polynomial.

\subsection{Unit-interval graphs}
A graph $\Gamma=(V,E)$ with vertex set $V=[n]$ is a \defin{unit-interval graph} 
if $\edge{u}{w}\in E$ implies $\edge{u}{v},\edge{v}{w}\in E$ for all $u,v,w\in[n]$ with $u<v<w$.
Unit-interval graphs are the incomparability graphs of unit-interval orders.
They are in bijection with Dyck paths and of 
great interest in the context of chromatic (quasi)symmetric functions.
The seminal work on this topic is due to J.~Shareshian and M.~Wachs~\cite{ShareshianWachs2012,ShareshianWachs2016}.

A \defin{decorated} unit-interval graph $\Gamma=(V,E,S)$ is a pair of 
a unit-interval graph $\Gamma=(V,E)$ and a subset $S\subseteq E$ of \defin{strict edges}.
To a Schr{\"o}der path $P$ in $\SCH{n}$ we associate a decorated unit-interval graph $\mathdefin{\Gamma_P}$ with vertex set $[n]$ as follows:
For $u,v\in[n]$ with $u<v$ there is a non-strict edge $\edge{u}{v}$ in $\Gamma_P$ if and only if there is a cell in column $u$ and row $v$ below the path $P$.
Moreover if $(u,v)$ is the endpoint of a diagonal step of $P$ then $\edge{u}{v}$ is a strict edge in $\Gamma_P$.
See \reff{schroder_path_and_graph} for an example of this correspondence.

\myfig[h]
{(a) The Schr{\"o}der path $P=\texttt{nndnnenedeee}$.
(b) The unit-interval graph $\Gamma_P$.
(c) A sligthly simplified diagram (Dyck diagram) describing $P$.
We have that $\area(P)=12$, and it is the number 
of white squares in this diagram.
(d) The vertical strips (French notation!) corresponding to $P$. 
Thus, $\nu' = (2/1, 3/2,2/1,2,3/1)$, and $\LLT_P = \LLT_\nu$,
using the Bylund--Haiman convention for 
indexing LLT polynomials with tuples of skew shapes.
The labeling of the boxes indicate the correspondence 
with the vertices in $\Gamma_P$.
}{schroder_path_and_graph}{
\begin{tikzpicture}
\begin{scope}
\draw[step=1em,gray](0,0)grid(7,7);
\draw[thickLine](0,0)--(0,2)--(1,3)--(1,5)--(2,5)--(2,6)--(3,6)--(4,7)--(7,7);
\draw
(0,0)node[mydot]{}
(0,2)node[mydot]{}
(1,3)node[mydot]{}
(1,5)node[mydot]{}
(2,5)node[mydot]{}
(2,6)node[mydot]{}
(3,6)node[mydot]{}
(4,7)node[mydot]{}
(7,7)node[mydot]{};
\draw(1,1)node[entries]{$1$};
\draw(2,2)node[entries]{$2$};
\draw(3,3)node[entries]{$3$};
\draw(4,4)node[entries]{$4$};
\draw(5,5)node[entries]{$5$};
\draw(6,6)node[entries]{$6$};
\draw(7,7)node[entries]{$7$};
\draw
(1,2)node[entries]{$\scriptstyle{12}$}
(2,3)node[entries]{$\scriptstyle{23}$}
(2,4)node[entries]{$\scriptstyle{24}$}
(2,5)node[entries]{$\scriptstyle{25}$}
(3,4)node[entries]{$\scriptstyle{34}$}
(3,5)node[entries]{$\scriptstyle{35}$}
(3,6)node[entries]{$\scriptstyle{36}$}
(4,5)node[entries]{$\scriptstyle{45}$}
(4,6)node[entries]{$\scriptstyle{46}$}
(5,6)node[entries]{$\scriptstyle{56}$}
(5,7)node[entries]{$\scriptstyle{57}$}
(6,7)node[entries]{$\scriptstyle{67}$};
\end{scope}
\begin{scope}[xshift=8em]
\draw
(4,1)node[circled](v1){1}
(7,2)node[circled](v2){2}
(7,5)node[circled](v3){3}
(5,7)node[circled](v4){4}
(3,7)node[circled](v5){5}
(1,5)node[circled](v6){6}
(1,2)node[circled](v7){7};
\draw[thickLine] (v1)--(v2)--(v3)--(v4)--(v5)--(v6)--(v7);
\draw[thickLine] (v5)--(v2)--(v4)--(v6)--(v3)--(v5)--(v7);
\draw[thickLine,red,->] (v1)--(v3);
\draw[thickLine,red,->] (v4)--(v7);
\end{scope}
\end{tikzpicture}
\quad
\ytableausetup{boxsize=1.0em,aligntableaux=center}
\begin{ytableau}
*(lightgray)  &*(lightgray)&*(lightgray)&*(lightgray) \to&&& *(yellow) 7 \\
*(lightgray)  &*(lightgray) &&&& *(yellow) 6 \\
*(lightgray)  &&&& *(yellow) 5 \\
*(lightgray)  &&& *(yellow) 4 \\
*(lightgray) \to  &  & *(yellow) 3 \\
  & *(yellow)  2 \\
*(yellow)  1
\end{ytableau}
\quad 
\includegraphics[page=1,scale=0.8,valign=c]{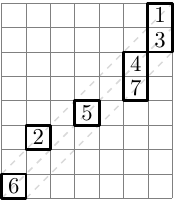}
}

\subsection{Colorings}

Let $P\in\SCH{n}$ be a Schr{\"o}der path.
A \defin{coloring} of $\Gamma_P$ is a map $\kappa : [n] \to \setNN$ such that $\kappa(u) < \kappa(v)$ for all $u,v\in[n]$ such that $u<v$ and $\edge{u}{v}$ is a strict edge.
An \defin{ascent} of a coloring is a a non-strict edge $\edge{u}{v}$ of $\Gamma_P$
such that $u<v$ and $\kappa(u) < \kappa(v)$.
We let $\mathdefin{\asc(\kappa)}$ denote the number of ascents of $\kappa$.

We remark that our terminology stems from the world of chromatic quasisymmetric functions.
Colorings are also closely related to parking functions, and the ascent statistic is called $\area'$ in that setting, see~\cite[Ch.~5]{qtCatalanBook}.
We define the \defin{area} of $P$, denoted $\mathdefin{\area(P)}\coloneqq \abs{E\setminus S}$, as the number of non-strict edges in $\uig_P$.

\begin{exa}{lltColoringExample}
Recall the path $P=\texttt{nndnnenedeee}$ from \reff{schroder_path_and_graph}.
The following diagram illustrates a coloring of $P$ where we have labeled 
$i$ in $\Gamma_P$ with $\kappa(i)$:
\begin{eq*}
\ytableausetup{boxsize=1.0em}
\begin{ytableau}
*(lightgray)  &*(lightgray)&*(lightgray)&*(lightgray) \to & & \circ & *(yellow) 2 \\
*(lightgray)  &*(lightgray) &&&& *(yellow) 1 \\
*(lightgray)  & \circ && \circ & *(yellow) 3 \\
*(lightgray)  &&& *(yellow) 1 \\
*(lightgray) \to  & \circ & *(yellow) 5 \\
  & *(yellow)  2 \\
*(yellow)  4
\end{ytableau}.
\end{eq*}
The four edges contributing to $\asc$ have been marked with $\circ$,
and this coloring contributes with $q^4 x_1^2 x_2^2 x_3 x_4 x_5$
to the sum in \refq{lltDefinition}.
\end{exa}

\subsection{Vertical-strip LLT polynomials}

For $P \in \SCH{n}$
the \defin{vertical-strip LLT polynomial} $\LLT_P(\xvec;q)$ is defined as
\begin{eq}{lltDefinition}
 \mathdefin{\LLT_P(\xvec;q)} \coloneqq 
 \sum_{\kappa} q^{\asc(\kappa)} x_{\kappa(1)} x_{\kappa(2)} \dotsm x_{\kappa(n)},
\end{eq}
where the sum ranges over all colorings of $\Gamma_P$.
One can show that $\LLT_P(\xvec;q)$ is a symmetric function and
a straightforward proof of this fact is given in \cite{AlexanderssonPanova2016},
which is an adaptation of the proof given in the appendix of \cite{HaglundHaimanLoehr2005}.
Alternatively this also follows from the $\elementaryE$-expansion provided in this paper.
Every LLT polynomial indexed by a $k$-tuple of vertical-strip skew shapes in the 
Bylund--Haiman model can be realized as $\LLT_P(\xvec;q)$ for some $P\in\SCH{n}$,
see \cite{CarlssonMellit2017,AlexanderssonPanova2016}.
The family of vertical-strip LLT polynomials indexed by Dyck paths are referred to as \defin{unicellular LLT polynomials}.

The vertical-strip LLT polynomials satisfy the recursion in \reft{recursion}.

\begin{thm}{LLT}
Let $F:\SCH{}\to\Lambda$ be defined by $F_P\xqvar = \LLT_P\xqvar$.
Then $F$ satisfies the conditions in \refti{recursion}{e}--\refi{recursion:unicellular}.
Moreover let $P\in\SCH{}$ be a Schr{\"o}der path, and let $(x,z)\in\setZ^2$ be a point on $P$ with $x+1<z$ such that the bounce decomposition of $P$ at $(x,z)$ is given by 
$P=Us \bs tV\dstep \bs \estep W$
for some $st\in\{\nstep\nstep,\dstep\nstep,\nstep\dstep\}$.
Then
\begin{eq*}
F_{P}\xqvar
=
\begin{cases}
qF_{U\nstep\nstep V\estep\dstep W}\xqvar
&\quad\text{if }st=\nstep\nstep,
\\
F_{U\nstep\dstep V\estep\dstep W}\xqvar
&\quad\text{if }st=\dstep\nstep
, \\
(q-1)F_{U\nstep\dstep V\estep\dstep W}\xqvar
+
qF_{U\dstep\nstep V\estep\dstep W}\xqvar
&\quad\text{if }st=\nstep\dstep
.
\end{cases}
\end{eq*}
\end{thm}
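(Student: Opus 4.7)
The plan is to verify each required condition directly from the combinatorial definition~\refq{lltDefinition} of $\LLT_P\xqvar$ as a weighted sum over colorings of $\Gamma_P$. Conditions \refi{recursion:e}--\refi{recursion:unicellular} are handled by elementary enumeration, while the three-case bounce relation is the main technical effort and will require explicit bijections on colorings.

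For~\refi{recursion:e}, the graph $\Gamma_{\nstep\dstep^k\estep}$ has vertex set $[k{+}1]$ with the $k$ strict edges $\edge{i}{i+1}$ arising from the diagonal steps and no further edges, so every coloring is a strictly increasing sequence with zero ascents, summing to $\elementaryE_{k+1}(\xvec)$. For~\refi{recursion:mult}, the concatenated path $PQ$ touches the main diagonal at $(n,n)$, so no grid box below $PQ$ lies in columns $[1,n]$ and rows $[n{+}1,n{+}m]$; hence $\Gamma_{PQ}$ is the disjoint union of $\Gamma_P$ with a shifted copy of $\Gamma_Q$, and the generating function factors. For~\refi{recursion:unicellular}, the three graphs $\Gamma_{U\nstep\estep V}$, $\Gamma_{U\estep\nstep V}$ and $\Gamma_{U\dstep V}$ agree on all edges except a single corner edge at the swap position, which is respectively non-strict, absent, and strict. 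Partitioning colorings by whether $\kappa$ assigns a smaller, equal, or larger value to the bottom versus the top of this corner yields the identity $\LLT_{U\nstep\estep V} - \LLT_{U\estep\nstep V} = (q-1)\LLT_{U\dstep V}$ after a short bookkeeping.

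The bounce relations will require more delicate bijective arguments. The geometric setup---one bounce point together with $V\in\{\nstep,\dstep\}^*$---forces the vertices of the bounce region to form a highly constrained subgraph of $\Gamma_P$ whose edge pattern is rigidly controlled by $V$ and by the points of incidence with the bounce path. For the cases $st=\nstep\nstep$ and $st=\dstep\nstep$, I plan to construct an explicit map on colorings that rearranges a small number of color values in the bounce region, track which edges acquire or lose an ascent, and verify that the net change of $\asc$ combines to give the prescribed power of $q$. The third case $st=\nstep\dstep$ carries a two-term right-hand side with the characteristic $(q-1)$ factor: I expect to split the colorings of $\Gamma_P$ into two classes based on a color comparison (reminiscent of the trichotomy in the unicellular argument), each in weight-tracking bijection with colorings of $\Gamma_{P'}$ or $\Gamma_{P''}$ respectively.

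The hard part of the argument will be the construction of these bijections. The modification $P \mapsto P'$ changes the path at two places simultaneously---near the start of the bounce region and at its end where $\dstep\estep \to \estep\dstep$---and the corresponding edge change in $\Gamma_P$ is not confined to these two corners: it also propagates along the ``connector'' edges joining the bounce endpoints $(u_1,u_0)$ and $(u_2,u_1)$ to the vertices inside $V$. A careful case analysis, organized by the relative order of the colors on either side of the bounce and supported by a weight-tracking argument across the vertices of $V$ (for which the hypothesis $V\in\{\nstep,\dstep\}^*$ is essential, as it rules out stray east steps in the middle), should collapse all these simultaneous edge changes into a single uniform shift of the ascent statistic.
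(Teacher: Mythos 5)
Your plan for the first three conditions coincides with what the paper does in \refp{coloringProperties} (strictly increasing colorings for $\nstep\dstep^k\estep$, disjoint unions for multiplicativity, and the trichotomy on the corner edge for the unicellular relation), and that part is sound. The gap is in the bounce relations. \reft{LLT} asserts them for an \emph{arbitrary} bounce decomposition $P=Us\bs tV\dstep\bs\estep W$: there is no hypothesis that the bounce path has a single bounce point, and no hypothesis $V\in\{\nstep,\dstep\}^*$. Your argument is explicitly built on both restrictions (you describe the setup as ``one bounce point together with $V\in\{\nstep,\dstep\}^*$'' and call the latter hypothesis ``essential''), so even if your bijections were carried out in full they would only establish the special case that appears in \refti{recursion}{bounce}, not the statement of \reft{LLT}.

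This is not a presentational quibble: the paper itself does \emph{not} prove the general relations bijectively. It proves bijectively only conditions (i)--(iii) and the two single-bounce-point relations $st=\nstep\nstep$ and $st=\dstep\nstep$ (\reft{colorings:bounceI} and \reft{colorings:bounceII}, both resting on the swap map of \refl{swapMap}, which just exchanges the colors of $x$ and $x+1$), and then obtains the third relation and the many-bounce-point relations by algebra: \refp{equivalentRelations} derives the $st=\nstep\dstep$ case from the other two via the unicellular relation, and \reft{bounce_reduction} removes bounce points one at a time by an induction whose step again uses the unicellular relation. The authors state explicitly that this reduction is currently their only way to prove bounce relations with many bounce points, so the direct coloring bijection you hope will ``collapse'' the edge changes propagating along $V$ is precisely the step that is not available; your proposal announces the maps (``I plan to construct'', ``I expect to split'') but never defines them or checks the ascent bookkeeping, even in the single-bounce-point case. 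To complete the proof you must either supply such a bijection in the full generality of \reft{LLT} (many bounce points, east steps permitted in $V$), which would be new, or add the reduction machinery of \refp{equivalentRelations} and \reft{bounce_reduction} on top of the single-bounce-point bijections.
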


A similar result was shown by M.~D'Adderio in \cite{DAdderio2020} 
in the language of the Dyck path algebra.
D'Adderio in turn built on the results that E.~Carlsson and A.~Mellit 
derived in their proof of the Shuffle Theorem \cite{CarlssonMellit2017}.
In \refs{colorings} we give an alternative proof of the fact that the vertical-strip LLT polynomials satisfy the conditions of \reft{recursion}.
This proof makes use of the combinatorics of colorings of unit-interval graphs and bijective arguments 
rather than relations between operators in the Dyck path algebra.
Note that \reft{LLT} contains more general relations than \reft{recursion}.
In particular it contains a third type of bounce relation which is illustrated by
\begin{eq}{LLT_caseIII_diagram}
\begin{tikzpicture}[scale=0.5]
\draw pic {chromatic};
\draw pic {chrom-bounceLL};
\draw pic {chrom-nd};
\draw pic {chrom-de};
\node at (5,-1.1) {$U{\nstep\bs\dstep}V{\dstep\bs\estep}W$};
\end{tikzpicture}
\quad
&=
\quad
(q-1)\times
\begin{tikzpicture}[scale=0.5]
\draw pic {chromatic};
\draw pic {chrom-bounceSL};
\draw pic {chrom-nd};
\draw pic {chrom-ed};
\node at (5,-1.1) {$U{\nstep\bs\dstep}V{\estep\bs\dstep}W$};
\end{tikzpicture}
\quad
+
\quad
q \times
\begin{tikzpicture}[scale=0.5]
\draw pic {chromatic};
\draw pic {chrom-bounceSS};
\draw pic {chrom-dn};
\draw pic {chrom-ed};
\node at (5,-1.1) {$U{\dstep\bs\nstep}V{\estep\bs\dstep}W$};
\end{tikzpicture}
.
\end{eq}
A proof of \reft{LLT} is achieved by combining the results of \refs{colorings} with \refp{equivalentRelations}, which deduces the third bounce relation from the first two, and with \reft{bounce_reduction}, which yields bounce relations with arbitrarily many bounce points.

\reft{LLT} clearly implies the existence statement in \reft{recursion}.
Moreover Theorems~\ref{Theorem:recursion} and~\ref{Theorem:LLT} 
provide a new characterization of vertical-strip LLT polynomials.
\begin{cor}{LLT}
The vertical-strip LLT-polynomials $\LLT_P\xqvar$ are uniquely determined 
by the initial condition and relations in \reft{recursion}.
\end{cor}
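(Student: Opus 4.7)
The plan is to obtain \refc{LLT} as an essentially immediate consequence of the preceding two theorems, so very little new work is required; the corollary is really just the formal combination of a uniqueness statement with an existence statement.

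First I would invoke \reft{recursion} applied with $A=\Lambda$. This provides a unique function $F\colon\SCH{}\to\Lambda$, $P\mapsto F_P\xqvar$ satisfying the four conditions \refi{recursion:e}--\refi{recursion:bounce}: the initial condition on paths of the form $\nstep\dstep^{k}\estep$, multiplicativity under concatenation, the unicellular relation, and the two basic single-bounce-point bounce relations ending in $\nstep\nstep$ or $\dstep\nstep$. Uniqueness is the content of \reft{recursion}, so this $F$ is characterized by those relations alone.

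Next I would apply \reft{LLT} with the specific function $G\colon\SCH{}\to\Lambda$ defined by $G_P\xqvar\coloneqq\LLT_P\xqvar$. That theorem asserts in particular that $G$ satisfies \refi{recursion:e}, \refi{recursion:mult}, \refi{recursion:unicellular}, and the two bounce cases $st\in\{\nstep\nstep,\dstep\nstep\}$ of \refi{recursion:bounce} (plus the additional $st=\nstep\dstep$ case, which we do not need here). In particular, $G$ satisfies \emph{all} the hypotheses of \reft{recursion}. By the uniqueness clause of \reft{recursion} we must therefore have $F=G$, i.e.\ the unique function determined by the relations is precisely $P\mapsto \LLT_P\xqvar$.

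There is no real obstacle: the only substantive content is verifying that the two theorems being cited really do fit together, which amounts to checking that the four conditions quoted inside \reft{LLT} are exactly the conditions \refi{recursion:e}--\refi{recursion:bounce} of \reft{recursion}. This is a matter of inspection, since \reft{LLT} is explicitly phrased so that its hypotheses contain those of \reft{recursion} (with the $st=\nstep\dstep$ bounce case offered as a bonus). Once this is noted, the corollary follows in one line.
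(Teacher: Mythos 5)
Your proposal is correct and matches the paper's reasoning: the corollary is obtained exactly by combining the uniqueness clause of \reft{recursion} with the fact, supplied by \reft{LLT}, that $P\mapsto\LLT_P\xqvar$ satisfies conditions \refi{recursion:e}--\refi{recursion:bounce} (the bounce relations in \reft{LLT} being more general, hence containing those of \refti{recursion}{bounce} as a special case). Nothing further is needed.
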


In this paper we make use of this characterization to prove an 
explicit positive expansion of $\LLT_P(\xvec,q+1)$ in terms of elementary symmetric functions.

\subsection{Orientations}

Let $\Gamma=(V,E)$ be a graph.
An \defin{orientation} of $\Gamma$ is a function $\theta:E\to V^2$ that assigns 
to each edge $uv\in E$ a directed edge $\dedge{u}{v}$ or $\dedge{v}{u}$.
Alternatively, we may view
$\theta=\{\theta(e):e\in E\}\subseteq V^2$
as a set that contains a unique directed edge for each edge in $E$.
We adopt the convention of viewing orientations as sets, 
which is more convenient if we want to add or remove edges.
Let $\Gamma'=(V,E')$ be a subgraph of $\Gamma$, that is, $E' \subseteq E$.
An orientation $\theta$ of $\Gamma$ yields an orientation $\theta'$ of $\Gamma'$ defined by
\begin{eq*}
\theta'
=
\{\dedge{u}{v}\in\theta:\edge{u}{v}\in E'\}
.
\end{eq*}
We call $\theta'$ the \defin{restriction} of $\theta$ to $E'$.
If $V=[n]$ then $\Gamma$ is equipped with the \defin{natural orientation} that assigns to each edge $uv$ the directed edge $\dedge{u}{v}$ where $u<v$.

Let $P\in\SCH{n}$ and let $\uig_P =(V,E,S)$ be the corresponding decorated unit-interval graph.
We let $\mathdefin{\ORIENTS(P)}$ denote the set of orientations $\theta$
of $\uig_P$ such that the restriction of $\theta$ to $S$ is the natural orientation.
Given $\theta \in \ORIENTS(P)$, an edge $\dedge{u}{v}\in\theta$ 
is an \defin{ascending edge} in $\theta$
if $u<v$ and $\edge{u}{v}\notin S$.
We let $\mathdefin{\asc(\theta)}$ denote the number of ascending edges in $\theta$.

\begin{exa}{decorated_orientation}
We use the diagram notation as in \reff{schroder_path_and_graph},
to illustrate an orientation $\theta$.
The ascending edges in $\theta$ are marked with $\to$ in the diagram,
and $\asc(\theta) = 5$.
\begin{eq*}
\ytableausetup{boxsize=1.0em}
\begin{ytableau}
*(lightgray)  &*(lightgray)&*(lightgray)&*(lightgray) \to&&& *(yellow) 7 \\
*(lightgray)  &*(lightgray) &&\to&\to& *(yellow) 6 \\
*(lightgray)  &\to&\to&& *(yellow) 5 \\
*(lightgray)  &&\to& *(yellow) 4 \\
*(lightgray) \to  &  & *(yellow) 3 \\
  & *(yellow)  2 \\
*(yellow)  1
\end{ytableau}
,\quad
\theta = \{ 
\dedge{2}{1},
\dedge{1}{3}, 
\dedge{3}{2},
\dedge{4}{2},
\dedge{2}{5},
\dedge{3}{4},
\dedge{3}{5},
\dedge{6}{3},
\dedge{5}{4},
\dedge{4}{6},
\dedge{4}{7},
\dedge{5}{6},
\dedge{7}{5},
\dedge{7}{6}
\}
\end{eq*}
\end{exa}

\subsection{The highest reachable vertex}
Let $\theta\in\ORIENTS(P)$.
For a vertex $u$ of $\uig_P$ the \defin{highest reachable vertex}, denoted $\mathdefin{\hrv{\theta}{u}}$,
is defined as the maximal $v$ such that there is a directed path from $u$ to $v$ in $\theta$ 
\emph{using only strict and ascending edges}.
By definition $\hrv{\theta}{u}\geq u$ for all $u$.
The orientation $\theta$ defines a set partition $\mathdefin{\hrvp(\theta)}$ of the vertices of $\Gamma_P$,
where two vertices belong to the same block if and only if they have the same highest reachable vertex.
Finally, let $\mathdefin{\hrvpp(\theta)}$ denote the integer 
partition given by the sizes of the blocks in $\hrvp(\theta)$.

\begin{exa}{LLTe}[{Taken from \cite{Alexandersson2019llt}}]
Below, we illustrate an orientation $\theta \in O(P)$,
where $P = \mathtt{nnnddeneee}$.
Strict edges and edges contributing to $\asc(\theta)$ are marked with $\to$.
\[
\ytableausetup{boxsize=1em,centertableaux}
\begin{ytableau}
*(lightgray)   & *(lightgray)   & *(lightgray)   &      & \to   & *(yellow) 6 \\
*(lightgray)   & *(lightgray)  \rightarrow &     &     & *(yellow) 5\\
*(lightgray)   \rightarrow &  \to &  \to   & *(yellow) 4\\
  \to  & \to    & *(yellow) 3\\
    & *(yellow) 2\\
*(yellow) 1
\end{ytableau}
\]
We have that $\hrv{\theta}{2}=\hrv{\theta}{5}=\hrv{\theta}{6}=6$ and $\hrv{\theta}{1}=\hrv{\theta}{3}=\hrv{\theta}{4}=4$.
Thus $\hrvp(\theta) = \{652,431\}$ and the orientation $\theta$ contributes 
with $q^{5}\elementaryE_{33}(\xvec)$ in \refq{conjFormula}.
\end{exa}

\subsection{An explicit \texorpdfstring{$e$}{e}-expansion}

Given a Schr{\"o}der path $P \in \SCH{n}$ define the symmetric function \defin{$\LLTe_{P}(\xvec;q)$} by
\begin{eq}{conjFormula}
\mathdefin{\LLTe_{P}(\xvec;q+1)} \coloneqq \sum_{\theta \in \ORIENTS(P)} q^{\asc(\theta)}
\elementaryE_{\hrvpp(\theta)}(\xvec).
\end{eq}
The second main result of this paper states that the symmetric 
functions defined in \refq{conjFormula} satisfy the initial conditions and relations in \reft{recursion}.

\begin{thm}{LLTe}
Let $F:\SCH{}\to\Lambda$ be defined by $F_P\xqvar=\LLTe_P\xqvar$.
Then $F$ satisfies the conditions in \reft{recursion}.
\end{thm}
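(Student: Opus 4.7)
The plan is to verify each of the four conditions of Theorem~\ref{Theorem:recursion} for $F_P = \LLTe_P$ in turn, using bijective/combinatorial arguments on orientations. Throughout, it is convenient to substitute $q \mapsto q+1$ and work with $G_P(q) := \LLTe_P(\mathbf{x};q+1) = \sum_{\theta \in \ORIENTS(P)} q^{\asc(\theta)} \elementaryE_{\hrvpp(\theta)}(\mathbf{x})$, under which the relation $(q-1)F_P$ in (iii) becomes $q \cdot G_P$, and the coefficients in (iv) transform accordingly.

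The initial condition (i) is immediate. For $P = \nstep\dstep^k\estep$, the graph $\Gamma_P$ has vertex set $[k+1]$ and its only edges are the $k$ strict edges $\edge{i}{i+1}$ produced by the diagonal steps; no further cells lie below the path. Hence $\ORIENTS(P)$ consists of the single natural orientation, in which every vertex has highest reachable vertex $k+1$, so $\LLTe_P(\mathbf{x};q+1) = \elementaryE_{k+1}$. Condition (ii) follows from the observation that $\Gamma_{PQ}$ is the disjoint union of $\Gamma_P$ and a shifted copy of $\Gamma_Q$, so $\ORIENTS(PQ) \cong \ORIENTS(P) \times \ORIENTS(Q)$ with $\asc$ additive and $\hrvpp$ the concatenation of partitions, and one invokes $\elementaryE_{\lambda \cup \mu} = \elementaryE_\lambda \elementaryE_\mu$.

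For the unicellular relation (iii), the graphs $\Gamma_{U\nstep\estep V}$, $\Gamma_{U\estep\nstep V}$ and $\Gamma_{U\dstep V}$ differ only in a single ``corner'' edge $e$: it is present and non-strict in the first graph, absent from the second, and present and strict in the third. Partitioning $\ORIENTS(U\nstep\estep V)$ according to the orientation of $e$, I would match orientations in which $e$ is oriented as an ascending edge with orientations of $\Gamma_{U\dstep V}$ (picking up one factor of $q$ in the ascent count, which is precisely what is required), and match orientations in which $e$ is oriented against the natural direction with orientations of $\Gamma_{U\estep\nstep V}$ (cancelling the second sum). The nontrivial content is that in both correspondences $\hrvpp(\theta)$ is preserved, which one checks by inspecting how $e$ interacts with paths of strict and ascending edges.

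The bounce relations (iv) form the main obstacle. For each of the two cases, I would build a bijection between $\ORIENTS(P)$ and $\ORIENTS$ of the rearranged path in which the initial pair $s_1s_2 \in \{\nstep\nstep,\dstep\nstep\}$ and the terminal pair $\dstep\estep$ of the bounce have been swapped. The one-bounce hypothesis forces the intermediate block $V \in \{\nstep,\dstep\}^*$ to be a strict staircase of north and diagonal steps, and this rigid structure is what makes the bijection tractable: the corresponding vertices form a chain of strict edges that must align under the swap, and the only local changes occur at the endpoints of the bounce. The factor $q$ in the case $st = \nstep\nstep$ would arise from the orientation of a single distinguished edge that becomes an ascending edge in the modified path. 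The hard part is controlling $\hrvpp(\theta)$: since the highest reachable vertex is a \emph{global} statistic in the directed graph of strict and ascending edges, local edge swaps can potentially alter reachability classes far away. I expect to prove that, under the bijection, the strict staircase in $V$ together with the way the $\dstep\estep$ at the bounce endpoint is relocated guarantees that the partition $\hrvp(\theta)$ of vertices into reachability classes is preserved block-for-block, so that $\hrvpp(\theta)$ matches on the two sides.
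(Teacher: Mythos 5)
Your handling of conditions (i)--(iii) is correct and is essentially the paper's \refp{O:e-mult-diag}; in particular the preservation of highest reachable vertices in the unicellular relation is indeed immediate, since the corner edge is either a directed edge that is neither strict nor ascending (and may be deleted harmlessly) or an ascending edge that becomes strict (leaving the set of usable edges unchanged). The gap is in the bounce relations \refti{recursion}{bounce}, which are the real content of \reft{LLTe}, and there your sketch does not go through as stated. First, in the case $st=\nstep\nstep$ there is no bijection $\ORIENTS(P)\to\ORIENTS(Q)$ to be found: passing from $P=U\nstep\nstep V\dstep\estep W$ to $Q=U\nstep\nstep V\estep\dstep W$ turns the non-strict edge $\edge{y}{z}$ into a strict one and deletes the strict edge $\edge{x}{z}$, so $\area(P)=\area(Q)+1$ and $\abs{\ORIENTS(P)}=2\,\abs{\ORIENTS(Q)}$. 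After the shift the relation reads $\LLTe_P(\xvec;q+1)=(q+1)\LLTe_Q(\xvec;q+1)$, so what is needed is a $\hrvpp$-preserving \emph{two-to-one} map whose two preimages of $\theta'$ have ascent numbers $\asc(\theta')$ and $\asc(\theta')+1$; a bijection producing ``a factor $q$ from one distinguished ascending edge'' cannot produce the factor $q+1$. This is exactly how \reft{orientations:bounceI} is organized, and your proposal does not confront it.

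Second, and more seriously, the expectation that a local swap at the two ends of the bounce preserves the partition $\hrvp(\theta)$ block-for-block is false, and this is precisely where the difficulty lies. Deleting the strict edge $\dedge{x}{z}$ can strictly lower $\hrv{\theta}{x}$ (when $x$ reaches its highest vertex only through $z$), while making $\edge{y}{z}$ strict can strictly raise $\hrv{\theta}{y}$ (when $\dedge{z}{y}\in\theta$ and $\hrv{\theta}{y}<\hrv{\theta}{z}$); no single uniform local rule repairs both effects. The paper therefore stratifies $\ORIENTS(P)$ by the weak standardization of $\big(\hrv{\theta}{x},\hrv{\theta}{y},\hrv{\theta}{z}\big)$ and pastes together three different maps $\phi,\phi',\psi$ (respectively $\phi,\psi$ for $st=\dstep\nstep$) on the different strata, as recorded in Tables~\ref{Table:orientations:bounceI} and~\ref{Table:orientations:bounceII}; the map $\psi$ is decidedly non-local --- it exchanges the entire rows and columns of $x$ and $y$ --- and it is only $\hrvp$-\emph{switching} (it permutes the blocks by the transposition $(x,y)$), which suffices for $\hrvpp$ but contradicts any block-for-block preservation statement. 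None of this case analysis, nor the verification that each map is well defined and lands surjectively in the corresponding stratum of $\ORIENTS(Q)$, appears in your proposal, and that is the bulk of the paper's proof. A smaller point: the hypothesis $V\in\{\nstep,\dstep\}^*$ is part of \refti{recursion}{bounce}, not a consequence of the single-bounce assumption (the example in the appendix has $V=\nstep\estep\nstep$ with one bounce point), and the paper's proofs of the bounce relations never use it; what the argument actually exploits is that the steps at the bounce endpoint together with the $\dstep\estep$ at $(x,z)$ give the rows and columns of $x$ and $y$ identical neighbourhoods, which is what makes the row/column swap $\psi$ available.
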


Combining Theorems~\ref{Theorem:recursion}, \ref{Theorem:LLT} and~\ref{Theorem:LLTe} we obtain the first explicit expansion of the vertical-strip LLT polynomials in terms of elementary symmetric functions.

\begin{cor}{expansion}
For all $P\in\SCH{}$ we have $\LLT_P\xqvar=\LLTe_P\xqvar$.
In particular $\LLT_P(\xvec;q+1)$ expands positively into elementary symmetric functions.
\end{cor}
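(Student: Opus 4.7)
The plan is to combine the three theorems preceding the corollary as a uniqueness argument. First I would invoke \reft{recursion} applied to the algebra $A=\Lambda$, which guarantees that there is at most one function $F:\SCH{}\to\Lambda$ satisfying the initial condition \refi{recursion:e}, the multiplicativity \refi{recursion:mult}, the unicellular relation \refi{recursion:unicellular}, and the two bounce relations in \refi{recursion:bounce}.

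Next I would exhibit two candidates for this function. By \reft{LLT}, the map $P\mapsto \LLT_P\xqvar$ satisfies conditions \refi{recursion:e}--\refi{recursion:unicellular} together with the bounce identities (in fact a stronger set including the $\nstep\dstep$ case). By \reft{LLTe}, the map $P\mapsto \LLTe_P\xqvar$ also satisfies all four conditions. The uniqueness clause of \reft{recursion} then forces
\begin{equation*}
\LLT_P\xqvar=\LLTe_P\xqvar\qquad\text{for every }P\in\SCH{}.
\end{equation*}

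Finally, for the $\elementaryE$-positivity after the substitution $q\mapsto q+1$, I would simply read off the defining formula \refq{conjFormula}: the right-hand side
\begin{equation*}
\LLTe_P(\xvec;q+1)=\sum_{\theta\in\ORIENTS(P)} q^{\asc(\theta)}\,\elementaryE_{\hrvpp(\theta)}(\xvec)
\end{equation*}
is a nonnegative-integer-coefficient combination of elementary symmetric functions in $\Lambda[q]$. Combined with the equality just established, this yields the claimed manifestly positive expansion of $\LLT_P(\xvec;q+1)$ in the $\elementaryE$-basis.

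There is essentially no obstacle at this stage, since all three ingredients are already proved (the existence halves are \reft{LLT} and \reft{LLTe}, proved combinatorially in Sections~\ref{Section:colorings} and~\ref{Section:orientations}, while the uniqueness half of \reft{recursion} is established in \refs{bounce}). The only thing to verify is that \reft{LLT} indeed implies conditions \refi{recursion:e}--\refi{recursion:bounce}, but this is immediate: its bounce cases strictly contain those of \reft{recursion}, its unicellular relation is identical, its multiplicativity is standard for LLT polynomials (the graph $\Gamma_{PQ}$ is a disjoint union of $\Gamma_P$ and $\Gamma_Q$ shifted, so colorings and ascents factor), and the initial condition $F_{\nstep\dstep^k\estep}\xqvar=\elementaryE_{k+1}$ is a direct computation from \refq{lltDefinition} since $\Gamma_{\nstep\dstep^k\estep}$ is a complete graph on $k+1$ vertices with every edge strict, forcing every coloring to be strictly increasing and contributing $\asc=0$.
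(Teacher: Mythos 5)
Your argument is correct and coincides with the paper's proof of this corollary: the uniqueness clause of \reft{recursion} (with $A=\Lambda$) together with the existence of the two candidates furnished by \reft{LLT} and \reft{LLTe} forces $\LLT_P\xqvar=\LLTe_P\xqvar$, and the positivity of $\LLT_P(\xvec;q+1)$ is then read off from the defining formula \refq{conjFormula}. One incidental slip in your (redundant) re-verification of the initial condition: $\Gamma_{\nstep\dstep^k\estep}$ is a path graph on $k+1$ vertices with all edges strict, not a complete graph, though the conclusion $F_{\nstep\dstep^k\estep}=\elementaryE_{k+1}$ is unaffected and is in any case already part of \reft{LLT}.
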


The positivity result in \refc{expansion} was obtained by M.~D'Adderio in~\cite{DAdderio2020}.
The formula in \refq{conjFormula} was conjectured 
in \cite{GarsiaHaglundQiuRomero2019} and independently by the 
first named author in \cite{Alexandersson2019llt}.
Combinatorial positive expansions into elementary symmetric functions were obtained in a few special cases in \cite{AlexanderssonPanova2016,Alexandersson2019llt}, albeit with 
a different statistic in place of $\hrvpp(\theta)$.

It is worth noting that the right-hand side of \refq{conjFormula} is manifestly symmetric.
In contrast, previously known formulas for LLT polynomials in terms of power-sum 
symmetric functions or Schur functions (with signs) rely on the expansion 
into fundamental quasisymmetric functions.
To derive these formulas one needs to use the fact that LLT polynomials 
are symmetric in the first place, and not just quasisymmetric.

\begin{rem}{AbreuNigro}
A stronger version of \refq{conjFormula} has been proved recently in 
\cite{AbreuNigro2020bx,AbreuNigro2020ax}. The authors mainly consider
the unicellular case, and they use the modular law and its dual in their proof.
\end{rem}

\section{Bounce relations}
\label{Section:bounce}

In this section we prove the uniqueness statement in \reft{recursion}.
In a first step we show that the bounce relations in \refti{recursion}{bounce} are equivalent to, and hence imply, other types of bounce relations.
This is the content of \refp{equivalentRelations}.
We then prove the presence of bounce relations with arbitrarily many bounce points in \reft{bounce_reduction}.
With these more general relations at our disposal the proof of the uniqueness statement can be carried out.
We conclude the section with a characterization of unicellular LLT polynomials that uses bounce relations for Dyck paths in \reft{dyckRecursion}. 

Recall that we use dots in the bounce decomposition to highlight the starting point and endpoint of the bounce path in consideration.
Note that the location of these dots does not change the word encoding a path.
Moreover, slightly different choices of starting points may result (essentially) in the same bounce path.
For example, 
\[
U\nstep\estep\bs\nstep V\estep \nstep\bs\estep W
\text{ and }
U\nstep\estep\bs\nstep V\estep \bs \nstep\estep W
\]
describe the same Schr{\"o}der path, and the indicated bounce paths 
have all bounce points in common.

\begin{prop}{equivalentRelations}
Let $A$ be a $\field$-algebra, and let $F:\SCH{}\to A$, $P\mapsto F_P\xqvar$ be a function 
that satisfies the unicellular relation, that is,
\begin{enumerate}[(i)]
\setcounter{enumi}{2}
\myi{equivalentRelations:unicellular}
For all $U,V\in\{\nstep,\dstep,\estep\}^*$ such that $U\dstep V\in\SCH{}$ we have
$F_{U\nstep\estep V}\xqvar -  F_{U\estep\nstep V}\xqvar = (q-1)F_{U\dstep V}\xqvar$.
\end{enumerate}

Then the following additional sets of relations are equivalent:
\begin{enumerate}[(i)]
\setcounter{enumi}{3}
\myi{equivalentRelations:A}
\textbf{Schr{\"o}der Relations A.}
Let $P\in\SCH{}$ be a Schr{\"o}der path, and let $(x,z)\in\setZ^2$ be a point on $P$ with $x+1<z$ such that the bounce path of $P$ at $(x,z)$ has only one bounce point, and such that the bounce decomposition of $P$ at $(x,z)$ is given by 
$P=Us\bs tV\dstep\bs \estep W$
for some $st\in\{\nstep\nstep,\dstep\nstep
\}$.
Then
\begin{numcases}
{F_{P}\xqvar=}
q F_{U\nstep\bs\nstep V \estep\bs\dstep W}\xqvar
&\quad\text{if }st=\nstep\nstep
,  
\label{Equation:BounceIa}
\\
F_{U\nstep\bs\dstep V \estep\bs\dstep W}\xqvar
&\quad\text{if }st=\dstep\nstep.
\label{Equation:BounceIIa}
\end{numcases}

\myi{equivalentRelations:B}
\textbf{Schr{\"o}der Relations B.}
Let $P\in\SCH{}$ be a Schr{\"o}der path, and let $(x,z)\in\setZ^2$ be a point on $P$ with $x+1<z$ such that the bounce path of $P$ at $(x,z)$ has only one bounce point, and such that the bounce decomposition of $P$ at $(x,z)$ is given by $P=Us\bs tV\dstep\bs\estep W$ for some $st\in\{\nstep\nstep,\nstep\dstep\}$.
Then
\begin{numcases}
{F_{P}\xqvar=}
q F_{U\nstep\bs\nstep V \estep\bs\dstep W}\xqvar
&\quad\text{if }st=\nstep\nstep
,  \label{Equation:BounceIb} \\
(q-1)F_{U\nstep\bs\dstep V \estep\bs\dstep W}\xqvar +
q F_{U\dstep\bs\nstep V \estep\bs\dstep W}\xqvar
&\quad\text{if }st=\nstep\dstep
.
\label{Equation:BounceIIb}
\end{numcases}

\myi{equivalentRelations:dyck}
\textbf{Dyck Relations.}
Let $P\in\SCH{}$ be a Schr{\"o}der path, and let $(x,z)\in\setZ^2$ be a point on $P$ with $x+1<z$ such that the bounce path of $P$ at $(x,z)$ has only one bounce point.
If the bounce decomposition of $P$ at $(x,z)$ is given by $P=U\nstep\bs\nstep V\nstep\estep\bs\estep W$ then
\begin{eq}{dyckBounceI}
F_{U\nstep\bs\nstep V\nstep \estep\bs\estep W}\xqvar
&= (q+1)F_{U\nstep\bs\nstep V\estep\bs\nstep\estep W}\xqvar
- q F_{U\nstep\bs\nstep V\estep\bs\estep\nstep W}\xqvar.
\end{eq}
If the bounce decomposition of $P$ at $(x,z)$ is given by $P=U\nstep\estep\bs\nstep V\nstep\estep\bs\estep W$ then
\begin{eq}{dyckBounceII}
&F_{U{\estep\nstep\bs\nstep}V{\estep\nstep\bs\estep}W}\xqvar
+
F_{U{\nstep\estep\bs\nstep}V{\nstep\estep\bs\estep}W}\xqvar
+
F_{U{\nstep\bs\nstep\estep}V{\estep\bs\estep\nstep}W}\xqvar = \\
&F_{U{\estep\nstep\bs\nstep}V{\nstep\estep\bs\estep}W}\xqvar
+
F_{U{\nstep\estep\bs\nstep}V{\estep\bs\estep\nstep}W}\xqvar
+
F_{U{\nstep\bs\nstep\estep}V{\estep\bs\nstep\estep}W}\xqvar.
\end{eq}
\end{enumerate}
\end{prop}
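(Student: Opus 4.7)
The plan is to use the unicellular relation from \refi{equivalentRelations:unicellular} as a conversion tool between a single diagonal step $\dstep$ and the difference of $\nstep\estep$ and $\estep\nstep$ patterns, and thereby establish the equivalences by a cycle of implications
\refi{equivalentRelations:A} $\Leftrightarrow$ \refi{equivalentRelations:B} and \refi{equivalentRelations:A} $\Leftrightarrow$ \refi{equivalentRelations:dyck}.

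I first observe that \refq{BounceIa} and \refq{BounceIb} are literally the same relation, so the only content in \refi{equivalentRelations:A} $\Leftrightarrow$ \refi{equivalentRelations:B} is the equivalence of \refq{BounceIIa} and \refq{BounceIIb}. The key idea is to apply unicellular to the $\dstep$ appearing in the bounce-starting pattern. Starting from \refq{BounceIIa}, I apply the unicellular relation to the diagonal step immediately following $U$ in the path $U\nstep\dstep V\dstep\estep W$, writing $(q-1)F_{U\nstep\dstep V\dstep\estep W}$ as $F_{U\nstep\nstep\estep V\dstep\estep W}-F_{U\nstep\estep\nstep V\dstep\estep W}$. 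The first term is simplified by an $\nstep\nstep$-bounce instance of \refq{BounceIa}, while the second is handled by the $\dstep\nstep$-bounce instance \refq{BounceIIa} after reabsorbing the $\estep\nstep$ fragment with a second unicellular step. After collecting terms one recovers \refq{BounceIIb}; the reverse implication is symmetric.

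For the implication \refi{equivalentRelations:A} $\Rightarrow$ \refi{equivalentRelations:dyck}, I would eliminate the diagonal steps in \refq{BounceIa} by substituting
\[
(q-1)F_{U\nstep\nstep V\dstep\estep W} = F_{U\nstep\nstep V\nstep\estep\estep W} - F_{U\nstep\nstep V\estep\nstep\estep W}
\]
and
\[
(q-1)F_{U\nstep\nstep V\estep\dstep W} = F_{U\nstep\nstep V\estep\nstep\estep W} - F_{U\nstep\nstep V\estep\estep\nstep W};
\]
a straightforward rearrangement produces precisely \refq{dyckBounceI}. The more intricate identity \refq{dyckBounceII} is derived analogously from \refq{BounceIIa} by eliminating \emph{both} of its diagonal steps via unicellular; this step involves more terms but no new ideas.

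The main obstacle is the reverse implication \refi{equivalentRelations:dyck} $\Rightarrow$ \refi{equivalentRelations:A}. Here the strategy is to exploit the invertibility of the unicellular relation: iteratively replacing each $\dstep$ in a Schr{\"o}der path with $\tfrac{1}{q-1}(\nstep\estep-\estep\nstep)$ expresses every $F_Q$ as a $\field$-linear combination of $F_{Q'}$ for Dyck paths $Q'$. Applying this reduction term-by-term to the desired identities \refq{BounceIa} and \refq{BounceIIa} converts each into an identity on Dyck paths, which must then be deduced from \refq{dyckBounceI} and \refq{dyckBounceII}. The delicate point is that a middle piece $V$ containing $k$ diagonal steps expands into $2^{k}$ Dyck-path terms, so one cannot simply cite one Dyck relation once; I would therefore proceed by induction on the number of diagonals in $V$, using \refq{dyckBounceII} to peel off interior diagonals one at a time and \refq{dyckBounceI} as the base case when $V$ is diagonal-free.
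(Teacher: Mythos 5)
Your forward computations are essentially correct and are, in fact, the paper's own manipulations read in the opposite direction: deriving \refq{dyckBounceI} from \refq{BounceIa} by clearing the two diagonals with the unicellular relation is exactly the paper's first step reversed, and your derivation of \refq{dyckBounceII} from \refq{BounceIIa} (clear both diagonals on each side with unicellular, cancel the common term $F_{U\nstep\estep\nstep V\estep\nstep\estep W}$) is correct and arguably cleaner than the paper's chain. Your direct route \refi{equivalentRelations:A}$\Leftrightarrow$\refi{equivalentRelations:B} can also be completed, but your description glosses over real steps: the phrase ``reabsorbing the $\estep\nstep$ fragment'' has to mean splitting $F_{U\nstep\estep\nstep V\dstep\estep W}=F_{U\estep\nstep\nstep V\dstep\estep W}+(q-1)F_{U\dstep\nstep V\dstep\estep W}$ (the other unicellular application is circular), then applying \refq{BounceIa} to the two $\nstep\nstep$-instances and \refq{BounceIIa} to the last term, and one still needs two further unicellular steps to turn $F_{U\nstep\nstep\estep V\estep\dstep W}-F_{U\estep\nstep\nstep V\estep\dstep W}$ into $(q-1)\bigl(F_{U\nstep\dstep V\estep\dstep W}+F_{U\dstep\nstep V\estep\dstep W}\bigr)$; the paper instead routes both \refi{equivalentRelations:A} and \refi{equivalentRelations:B} through \refi{equivalentRelations:dyck}, deducing \refq{BounceIIb}$\Leftrightarrow$\refq{dyckBounceII} with the help of \refq{BounceIb} and \refq{dyckBounceI}.

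The genuine gap is your treatment of \refi{equivalentRelations:dyck}$\Rightarrow$\refi{equivalentRelations:A}, which you yourself single out as the main obstacle. It rests on a misreading: the Dyck Relations in the proposition are stated for an \emph{arbitrary} Schr\"oder path $P$ whose bounce decomposition at $(x,z)$ has the indicated local pattern, so $U$, $V$, $W$ may contain diagonal steps. Hence there is no $2^{k}$ blow-up to control and no induction on the diagonals of $V$: since every step in your derivation of \refq{dyckBounceI} and \refq{dyckBounceII} from Relations A is an application of the hypothesized unicellular identity, a linear rearrangement, or division by a power of $q-1$ (a unit in $K=\setQ(q)$), the whole chain reverses verbatim, which is exactly how the paper closes the argument (``all steps in this deduction are invertible''). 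Moreover, even under your reading the proposed mechanism does not do what you claim: an interior diagonal of $V$ (or of $U$, $W$) is eliminated by applying the unicellular relation identically to both sides of the target identity, so \refq{dyckBounceII} plays no role in ``peeling'' it, and the base case with $V$ diagonal-free still needs \refq{dyckBounceII}, not \refq{dyckBounceI}, to recover \refq{BounceIIa}. Finally, each time you invoke a relation on a modified path you must check that its bounce decomposition at $(x,z)$ retains the required form and single bounce point (true here because all modifications occur at heights different from $x$, but never verified in your sketch). As written, the reverse implication is not established.
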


\begin{rem}{cross-product}
Curiously \refq{dyckBounceII} is reminiscent of a cross-product and can be expressed as the following vanishing determinant, where multiplication is now replaced by concatenation:
\begin{eq*}
\begin{vmatrix}
U & U & U \\
\estep\nstep\nstep V & \nstep\estep\nstep V & \nstep\nstep\estep V \\
\estep\estep\nstep W & \estep\nstep\estep W & \nstep\estep\estep W
\end{vmatrix}
=0.
\end{eq*}
Expanding this determinant according to Sarrus' rule gives 
exactly the terms in \refq{dyckBounceII} with correct sign.
\end{rem}

\begin{proof}[Proof of \refp{equivalentRelations}]
We first show that \refq{BounceIa} and \refq{BounceIb} are equivalent to \refq{dyckBounceI}.
We start with \refq{dyckBounceI},
\begin{eq*}
F_{U\nstep\bs\nstep V\nstep \estep\bs\estep W}
&=
(q+1)F_{U\nstep\bs\nstep V\estep\bs\nstep\estep W}
- q F_{U\nstep\bs\nstep V\estep\bs\estep\nstep W}
.
\end{eq*}
Bringing one term to the other side we obtain
\begin{eq*}
F_{U\nstep\bs\nstep V\nstep\estep\bs\estep W} - F_{U\nstep\bs\nstep V\estep\nstep\bs\estep W}
&=
q(F_{U\nstep\bs\nstep V\estep\nstep\bs\estep W}-F_{U\nstep\bs\nstep V\estep\bs\estep\nstep W}) 
.
\end{eq*}
Applying the unicellular relation~\refi{equivalentRelations:unicellular} on both sides we obtain
\begin{eq*}
(q-1) F_{U\nstep\bs\nstep V\dstep\bs\estep W}
&=
q (q-1) F_{U\nstep\bs\nstep V\estep\bs\dstep W}
,
\end{eq*}
which after the cancellation of a common factor becomes \refq{BounceIa}.
Clearly each step is reversible and we obtain the desired equivalence.

\medskip 
Next we show that \refq{BounceIIa} is equivalent to \refq{dyckBounceII}. 
We start with \refq{dyckBounceII}, which is rearranged as
\begin{eq*}
\begin{aligned}
\begin{tikzpicture}[scale=0.5]
\draw pic {chromatic};
\draw pic {chrom-bounceLS};
\draw pic {chrom-nen};
\draw pic {chrom-nee};
\node at (5,-1.1) {$U{\nstep\estep\bs\nstep}V{\nstep\estep\bs\estep}W$};
\end{tikzpicture}
-
\begin{tikzpicture}[scale=0.5]
\draw pic {chromatic};
\draw pic {chrom-bounceSS};
\draw pic {chrom-nen};
\draw pic {chrom-een};
\node at (5,-1.1) {$U{\nstep\estep\bs\nstep}V{\estep\bs\estep\nstep}W$};
\end{tikzpicture}
&=
\begin{tikzpicture}[scale=0.5]
\draw pic {chromatic};
\draw pic {chrom-bounceSS};
\draw pic {chrom-enn};
\draw pic {chrom-ene};
\node at (5,-1.1) {$U{\estep\nstep\bs\nstep}V{\estep\nstep\bs\estep}W$};
\end{tikzpicture}
-
\begin{tikzpicture}[scale=0.5]
\draw pic {chromatic};
\draw pic {chrom-bounceLS};
\draw pic {chrom-enn};
\draw pic {chrom-nee};
\node at (5,-1.1) {$U{\estep\nstep\bs\nstep}V{\nstep\estep\bs\estep}W$};
\end{tikzpicture}+ \\
&
+
\begin{tikzpicture}[scale=0.5]
\draw pic {chromatic};
\draw pic {chrom-bounceSL};
\draw pic {chrom-nne};
\draw pic {chrom-een};
\node at (5,-1.1) {$U{\nstep\bs\nstep\estep}V{\estep\bs\estep\nstep}W$};
\end{tikzpicture}
-
\begin{tikzpicture}[scale=0.5]
\draw pic {chromatic};
\draw pic {chrom-bounceSL};
\draw pic {chrom-nne};
\draw pic {chrom-ene};
\node at (5,-1.1) {$U{\nstep\bs\nstep\estep}V{\estep\bs\nstep\estep}W$};
\end{tikzpicture}.
\end{aligned}
\end{eq*}
Applying the unicellular relation twice on the right-hand side we get
\begin{eq*}
\begin{aligned}
\begin{tikzpicture}[scale=0.5]
\draw pic {chromatic};
\draw pic {chrom-bounceLS};
\draw pic {chrom-nen};
\draw pic {chrom-nee};
\node at (5,-1.1) {$U{\nstep\estep\bs\nstep}V{\nstep\estep\bs\estep}W$};
\end{tikzpicture}
\!
-
\!
\begin{tikzpicture}[scale=0.5]
\draw pic {chromatic};
\draw pic {chrom-bounceSS};
\draw pic {chrom-nen};
\draw pic {chrom-een};
\node at (5,-1.1) {$U{\nstep\estep\bs\nstep}V{\estep\bs\estep\nstep}W$};
\end{tikzpicture}
&=
(q-1)
\times
\begin{tikzpicture}[scale=0.5]
\draw pic {chromatic};
\draw pic {chrom-bounceLS};
\draw pic {chrom-enn};
\draw pic {chrom-de};
\node at (5,-1.1) {$U{\estep\nstep\bs\nstep}V{\dstep\bs\estep}W$};
\end{tikzpicture}+
\\
&\phantom{q}+
(q-1)\times
\begin{tikzpicture}[scale=0.5]
\draw pic {chromatic};
\draw pic {chrom-bounceSL};
\draw pic {chrom-nne};
\draw pic {chrom-ed};
\node at (5,-1.1) {$U{\nstep\bs\nstep\estep}V{\estep\bs\dstep}W$};
\end{tikzpicture}
.
\end{aligned}
\end{eq*}
After additional applications of the unicellular relation on the left-hand side
we get (after dividing by $q-1$)
\begin{eq}{dyckReduction}
\begin{tikzpicture}[scale=0.5]
\draw pic {chromatic};
\draw pic {chrom-bounceLS};
\draw pic {chrom-nen};
\draw pic {chrom-de};
\node at (5,-1.1) {$U{\nstep\estep\bs\nstep}V{\dstep\bs\estep}W$};
\end{tikzpicture}
+
\begin{tikzpicture}[scale=0.5]
\draw pic {chromatic};
\draw pic {chrom-bounceSS};
\draw pic {chrom-nen};
\draw pic {chrom-ed};
\node at (5,-1.1) {$U{\nstep\estep\bs\nstep}V{\estep\bs\dstep}W$};
\end{tikzpicture}
=
\begin{tikzpicture}[scale=0.5]
\draw pic {chromatic};
\draw pic {chrom-bounceLS};
\draw pic {chrom-enn};
\draw pic {chrom-de};
\node at (5,-1.1) {$U{\estep\nstep\bs\nstep}V{\dstep\bs\estep}W$};
\end{tikzpicture}
+
\begin{tikzpicture}[scale=0.5]
\draw pic {chromatic};
\draw pic {chrom-bounceSL};
\draw pic {chrom-nne};
\draw pic {chrom-ed};
\node at (5,-1.1) {$U{\nstep\bs\nstep\estep}V{\estep\bs\dstep}W$};
\end{tikzpicture}
.
\end{eq}
Rearranging the terms again we arrive at
\begin{eq*}
\begin{tikzpicture}[scale=0.5]
\draw pic {chromatic};
\draw pic {chrom-bounceLS};
\draw pic {chrom-nen};
\draw pic {chrom-de};
\node at (5,-1.1) {$U{\nstep\estep\bs\nstep}V{\dstep\bs\estep}W$};
\end{tikzpicture}
-
\begin{tikzpicture}[scale=0.5]
\draw pic {chromatic};
\draw pic {chrom-bounceLS};
\draw pic {chrom-enn};
\draw pic {chrom-de};
\node at (5,-1.1) {$U{\estep\nstep\bs\nstep}V{\dstep\bs\estep}W$};
\end{tikzpicture}
=
\begin{tikzpicture}[scale=0.5]
\draw pic {chromatic};
\draw pic {chrom-bounceSL};
\draw pic {chrom-nne};
\draw pic {chrom-ed};
\node at (5,-1.1) {$U{\nstep\bs\nstep\estep}V{\estep\bs\dstep}W$};
\end{tikzpicture}
-
\begin{tikzpicture}[scale=0.5]
\draw pic {chromatic};
\draw pic {chrom-bounceSS};
\draw pic {chrom-nen};
\draw pic {chrom-ed};
\node at (5,-1.1) {$U{\nstep\estep\bs\nstep}V{\estep\bs\dstep}W$};
\end{tikzpicture}
,
\end{eq*}
which after a final application of the unicellular 
relation on both sides gives \refq{BounceIIa}.

\medskip 
To complete the proof we now show that \refq{BounceIIb} is equivalent to \refq{dyckBounceII} assuming that \refq{BounceIb} and \refq{dyckBounceI} are valid.
We start with \refq{BounceIIb} which becomes
\begin{eq*}
\begin{tikzpicture}[scale=0.5]
\draw pic {chromatic};
\draw pic {chrom-bounceLL};
\draw pic {chrom-nd};
\draw pic {chrom-de};
\node at (5,-1.1) {$U{\nstep\bs\dstep}V{\dstep\bs\estep}W$};
\end{tikzpicture}
+
\begin{tikzpicture}[scale=0.5]
\draw pic {chromatic};
\draw pic {chrom-bounceSL};
\draw pic {chrom-nd};
\draw pic {chrom-ed};
\node at (5,-1.1) {$U{\nstep\bs\dstep}V{\estep\bs\dstep}W$};
\end{tikzpicture}
=
q
\begin{tikzpicture}[scale=0.5]
\draw pic {chromatic};
\draw pic {chrom-bounceSL};
\draw pic {chrom-nd};
\draw pic {chrom-ed};
\node at (5,-1.1) {$U{\nstep\bs\dstep}V{\estep\bs\dstep}W$};
\end{tikzpicture}
+
q
\begin{tikzpicture}[scale=0.5]
\draw pic {chromatic};
\draw pic {chrom-bounceSS};
\draw pic {chrom-dn};
\draw pic {chrom-ed};
\node at (5,-1.1) {$U{\dstep\bs\nstep}V{\estep\bs\dstep}W$};
\end{tikzpicture}
\end{eq*}
after moving one term to the left-hand side.
We now use the unicellular relation to eliminate one diagonal step in each path.
In the left-hand side, we keep the first diagonal step and eliminate the second one.
In the right-hand side we keep the second diagonal step.
After cancelling a common factor of $(q-1)$, the left-hand side simplifies to
\begin{eq*}
(F_{U\nstep\bs \dstep V\nstep \estep\bs\estep W}
-
F_{U\nstep\bs \dstep V\estep\bs \nstep\estep W}
)+(
F_{U\nstep\bs \dstep V\estep\bs\nstep\estep W}
&- 
F_{U\nstep\bs \dstep V\estep\bs\estep\nstep W}
)
\\
&=
F_{U\nstep\bs \dstep V\nstep \estep\bs\estep W}
-
F_{U\nstep\bs \dstep V\estep\bs\estep\nstep W}
\end{eq*}
Similarly the right-hand side becomes
\begin{align*}
q(
F_{U\nstep\bs \nstep\estep V\estep\bs\dstep W}
-
F_{U\nstep\estep\bs\nstep V\estep\bs\dstep W}
)
+
q(
F_{U\nstep\estep\bs\nstep V\estep\bs\dstep W}
&-
F_{U\estep\nstep \bs \nstep V\estep\bs\dstep W}
)
\\
&=
q(F_{U\nstep\bs \nstep\estep V\estep\bs\dstep W}
-
F_{U\estep\nstep \bs \nstep V\estep\bs\dstep W})
.
\end{align*}
Now we use \refq{BounceIb} on both terms in the right-hand side, that is, 
$
q F_{U\nstep\bs \nstep\estep V\estep\bs\dstep W} = F_{U\nstep\bs \nstep\estep V\dstep\bs\estep W}
$
and 
$
q F_{U\estep\nstep\bs \nstep V\estep\bs\dstep W} = F_{U\estep\nstep\bs \nstep V\dstep\bs\estep W}
$.
Moving all terms to the left-hand side, we obtain the relation
\begin{eq*}
F_{U\estep\nstep \bs \nstep V\dstep\bs\estep W}
-
F_{U\nstep\bs \nstep\estep V\dstep\bs\estep W}
+
F_{U\nstep\bs \dstep V\nstep \estep\bs\estep W}
- 
F_{U\nstep\bs \dstep V\estep\bs\estep\nstep W}
=0.
\end{eq*}
Finally, we eliminate the remaining diagonal steps using the unicellular relation.
After cancellation of a common factor this gives the identity
\begin{align*}
0 &= \left(
F_{U\estep\nstep \bs \nstep V\nstep\estep\bs\estep W}
-
F_{U\estep\nstep \bs \nstep V\estep\bs\nstep\estep W}
\right) 
-
\left(
\underline{F_{U\nstep\bs\nstep\estep V\nstep\estep\bs\estep W}}
-
F_{U\nstep\bs \nstep\estep V\estep\bs\nstep\estep W}
\right) \\
&\phantom{=}+
\left(
\underline{F_{U\nstep\bs\nstep\estep V\nstep\estep\bs\estep W}}
-
F_{U\nstep\estep\bs \nstep V\nstep \estep\bs\estep W}
\right) 
- 
\left(
F_{U\nstep\bs \nstep\estep V\estep\bs\estep\nstep W}
-
F_{U\nstep\estep\bs\nstep V\estep\bs\estep\nstep W}
\right).
\end{align*}
After cancelling the underlined terms we arrive at \refq{dyckBounceII}.
Since all steps in this deduction are invertible we have the desired equivalence.
\end{proof}

Our next goal is to show that the relations in \reft{recursion} 
are strong enough to impose relations involving bounce paths with \emph{arbitrarily many} bounce points.

\begin{thm}{bounce_reduction}
Let $A$ be a $\field$-algebra, and let $F:\SCH{}\to A$, $P\mapsto F_P\xqvar$ be a function that 
satisfies Conditions~\refi{recursion:unicellular} and~\refi{recursion:bounce} in \reft{recursion}.
Let $P\in\SCH{}$ be a Schr{\"o}der path and let $(x,z)\in\setZ^2$ be 
a point on $P$ with $z>x+1$ such that the bounce decomposition of $P$ at $(x,z)$ is given by 
$P=Us\bs tV\dstep \bs \estep W$
for some $V\in\{\nstep,\dstep\}^*$ and some $st\in\{\nstep\nstep,\nstep\dstep,\dstep\nstep\}$.
Then
\begin{eq*}
F_{P}\xqvar
=
\begin{cases}
qF_{U\nstep\nstep V\estep\dstep W}\xqvar
&\quad\text{if }st=\nstep\nstep,
\\
F_{U\nstep\dstep V\estep\dstep W}\xqvar
&\quad\text{if }st=\dstep\nstep,
\\
(q-1)F_{U\nstep\dstep V\estep\dstep W}\xqvar
+
qF_{U\dstep\nstep V\estep\dstep W}\xqvar
&\quad\text{if }st=\nstep\dstep.
\end{cases}
\end{eq*}
\end{thm}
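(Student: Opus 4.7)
The plan is to proceed by induction on the number $k$ of bounce points of the bounce path of $P$ at $(x,z)$.

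For the base case $k=1$ the bounce path has a single bounce point, so the desired relation is already in our hands: when $st \in \{\nstep\nstep, \dstep\nstep\}$ it is exactly \refti{recursion}{bounce}, and when $st = \nstep\dstep$ it is the Schr\"oder Relations~B form from \refp{equivalentRelations}, which holds because \refi{recursion:unicellular} and \refi{recursion:bounce} together force Relations~B.

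For the inductive step $k \geq 2$ the idea is to eliminate one bounce continuation. Since $k \geq 2$, the bounce path has an intermediate peak $(u_2,u_1)$ closest to the starting point, at which $P$ exhibits the local pattern $\dstep\dstep$ inside the middle word $tV\dstep$. Apply \refi{recursion:unicellular} to the second of these two $\dstep$ steps to obtain
\[
(q-1)\,F_{P} \;=\; F_{P_{+}} \;-\; F_{P_{-}},
\]
where $P_{+}$ and $P_{-}$ are obtained from $P$ by replacing the chosen $\dstep$ with $\nstep\estep$ and $\estep\nstep$, respectively. In both $P_{+}$ and $P_{-}$ the critical $\dstep\dstep$ pattern has been broken, so the bounce path at $(x,z)$ now has only $k-1$ bounce points.

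The subtlety is that the middle word of $P_{\pm}$ contains an $\estep$, and hence $V$ no longer lies in $\{\nstep,\dstep\}^{*}$. I would therefore prove in parallel a strengthened form of the theorem, valid for $V \in \{\nstep,\dstep,\estep\}^{*}$, by a secondary induction on the number of $\estep$ steps inside the middle word and again using \refi{recursion:unicellular} to trade each such $\estep$ (together with an adjacent $\dstep$) for pieces whose middle word has strictly fewer $\estep$'s. Substituting the strengthened induction hypothesis for $F_{P_{+}}$ and $F_{P_{-}}$ into the displayed equation and simplifying---once more using \refi{recursion:unicellular} to absorb the residual $\nstep\estep$ and $\estep\nstep$ pairs---produces exactly the three-case identity of the theorem. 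The main obstacle will be organising this double induction lexicographically so that each invocation of the hypothesis is on a strictly smaller instance, and then verifying that the algebraic combinations in each of the three cases $st \in \{\nstep\nstep,\nstep\dstep,\dstep\nstep\}$ telescope to the single claimed relation rather than a broader family of relations among the $F_{P_{\pm}}$.
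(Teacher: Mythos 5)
Your opening coincides with the paper's argument: induction on the number of bounce points, the base case for $st=\nstep\dstep$ via the equivalence of Relations A and B in \refp{equivalentRelations} (with the restriction $V\in\{\nstep,\dstep\}^*$ carried through), and an application of the unicellular relation to the second diagonal step at the intermediate peak nearest the start. Writing $P=Us\bs tV'\dstep\dstep V''\dstep\bs\estep W$ with the two diagonals meeting at $(v,x)$, and setting $w=v+1$, $y=x+1$, this gives $(q-1)F_P=F_{P_+}-F_{P_-}$, exactly as in the paper. From here, however, your plan has genuine gaps. A small one: the bounce paths of $P_+$ and $P_-$ at $(x,z)$ do not have $k-1$ bounce points; they terminate after their first bounce, because the point where the westward ray meets the path ($(v,x)$ for $P_+$, $(w,x)$ for $P_-$) is no longer between two diagonal steps. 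The serious one concerns $P_-$: its bounce decomposition at $(x,z)$ ends in the pattern $\estep\nstep$, which is not among $\{\nstep\nstep,\dstep\nstep,\nstep\dstep\}$, so neither the theorem nor your strengthened version (which only relaxes the condition on $V$) can be substituted for $F_{P_-}$ there. Your instinct that an east step enters the middle word is correct, but only for $P_+$ (its middle word at $(x,z)$ is $\estep V''$). The paper resolves both terms by treating them asymmetrically: for $P_+$ it applies the single-bounce relation of \refti{recursion}{bounce} at $(x,z)$ first and then the induction hypothesis at $(v,y)$, where the decomposition ends in the original $st$ with middle word $V'\nstep\in\{\nstep,\dstep\}^*$; for $P_-$ it applies the induction hypothesis first, at the intermediate peak $(v,x)$ (ending $st$, middle word $V'$), and only afterwards the single-bounce relation at $(x,z)$, whose ending has by then become $\dstep\nstep$ with middle word $V''$. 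The two outputs differ by $\nstep\estep$ versus $\estep\nstep$ at one spot, and one final unicellular application yields the claimed identity.

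Moreover, the strengthening you propose is not established by your sketch. An east step in the middle word need not be adjacent to a diagonal step, so ``trading each $\estep$ together with an adjacent $\dstep$'' is not available in general; the unicellular relation read in the useful direction, $F_{U\nstep\estep V}=F_{U\estep\nstep V}+(q-1)F_{U\dstep V}$, lowers the east-step count in only one of the two resulting terms, so the secondary induction does not visibly terminate; and sliding an east step to the boundary of the middle word changes the bounce decomposition and its ending pattern, which pushes you into relations for endings such as $\estep\nstep$ that you have not derived (in effect the Dyck-type relations of \refp{equivalentRelations}, but now with many bounce points). So, as written, the inductive step rests on an unproved strengthened hypothesis and, even granting it, breaks on the $P_-$ branch; the missing idea is the paper's device of invoking the induction hypothesis at the intermediate bounce peak $(v,x)$ (respectively $(v,y)$) rather than only at $(x,z)$, which keeps every application within the stated hypotheses.
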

\begin{proof}
The claim is shown by induction on the number of bounce points of the bounce path of $P$ at $(x,z)$.
First assume that the bounce path of $P$ at $(x,z)$ has only one bounce point.
If $st\in\{\nstep\nstep,\dstep\nstep\}$ then there is nothing to show.
If $st=\nstep\dstep$ then the claim follows from \refp{equivalentRelations}.
To see this note that the proof of the equivalence of \refpi{equivalentRelations}{A} and~\refi{equivalentRelations:B} goes through in the same way if we impose the 
restriction $V\in\{\nstep,\dstep\}^*$ in both cases.
Thus the base case of the induction is taken care of.

Otherwise the bounce partition of $P$ at $(x,z)$ has length $r>3$ and is given by $(z,x,v,\dotsc)$.
Set $y=x+1$ and $w=v+1$.
In this case there are at least two bounce points and we may write
\begin{eq*}
P=Us\bs tV'\dstep\dstep V''\dstep\bs\estep W,
\end{eq*}
where $V',V''\in\{\nstep,\dstep\}^*$ such that $\dstep V''\dstep$ defines a path from $(v,x)$ to $(x,z)$.
Note that here we use the fact that $z>y$, which also implies $x>w$.
By the unicellular relation 
we have
\begin{eq}{LLTe_P}
F_{P}\xqvar
=
\frac{1}{q-1}
\Big(
F_{Us tV'\dstep\bs \nstep\estep V''\dstep\bs \estep W}\xqvar
-
F_{Us\bs tV'\dstep\bs \estep\nstep V''\dstep \estep W}\xqvar
\Big),
\end{eq}
which is illustrated by diagrams as
\begin{eq}{LLTe_P_diagram}
\includegraphics[page=1,scale=0.8,valign=c]{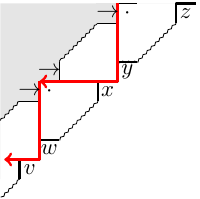}%
=
\frac{1}{q-1}
\includegraphics[page=2,scale=0.8,valign=c]{figuresBounceRelations.pdf}%
-
\frac{1}{q-1}
\includegraphics[page=3,scale=0.8,valign=c]{figuresBounceRelations.pdf}%
.
\end{eq}

To simplify notation suppose for the moment that $st=\dstep\nstep$.
By applying first \refti{recursion}{bounce} at the point $(x,z)$, 
and then the induction hypothesis at the point $(v,y)$ we obtain
\begin{eq}{LLTe_1}
F_{U\dstep\nstep V'\dstep\bs\nstep\estep V''\dstep\bs\estep W}\xqvar
&=
F_{U\dstep\bs\nstep V'\nstep\bs\dstep\estep V''\estep\bs\dstep W}\xqvar
=
F_{U\nstep\dstep V'\nstep\estep\dstep V''\estep\dstep W}\xqvar
,
\end{eq}
which corresponds to
\begin{eq}{LLTe_1_diagram}
\includegraphics[page=4,scale=0.8,valign=c]{figuresBounceRelations.pdf}%
=
\includegraphics[page=5,scale=0.8,valign=c]{figuresBounceRelations.pdf}%
=
\includegraphics[page=6,scale=0.8,valign=c]{figuresBounceRelations.pdf}%
.
\end{eq}
Note that the bounce path of $Us\bs tV'\nstep\dstep\estep V''\estep\bs\dstep W$ at $(v,y)$ 
has the same bounce points as the bounce path of $P$ at $(x,z)$ except for $(x,x)$.
Thus the induction hypothesis can be applied.

Similarly, by applying first induction hypothesis at the 
point $(v,x)$, and then \refti{recursion}{bounce} at the point $(x,z)$ the we obtain
\begin{eq}{LLTe_2}
F_{U\dstep\bs\nstep V'\dstep\bs\estep\nstep V''\dstep\estep W}\xqvar
&=
F_{U\nstep\dstep V'\estep\dstep\bs\nstep V''\dstep\bs\estep W}\xqvar
=
F_{U\nstep\dstep V'\estep\nstep\dstep V''\estep\dstep W}\xqvar
.
\end{eq}
This corresponds to
\begin{eq}{LLTe_2_diagram}
\includegraphics[page=7,scale=0.8,valign=c]{figuresBounceRelations.pdf}%
=
\includegraphics[page=8,scale=0.8,valign=c]{figuresBounceRelations.pdf}%
=
\includegraphics[page=9,scale=0.8,valign=c]{figuresBounceRelations.pdf}%
.
\end{eq}
Note that the bounce path of $Us\bs tV'\dstep\estep\nstep V''\estep\bs\dstep W$ 
at $(v,x)$ has the same bounce points as the bounce path of $P$ at $(x,z)$ except for $(x,x)$.
Thus the induction hypothesis can be applied.

Combining~\refq{LLTe_P}, \refq{LLTe_1} and~\refq{LLTe_2}, 
and using the unicellular relation 
one more time, we obtain
\begin{eq*}
F_{P}\xqvar
&=
\frac{1}{q-1}
\Big(
F_{U\nstep\dstep V'\nstep\estep\dstep V''\estep\dstep W}\xqvar
-
F_{U\nstep\dstep V'\estep\nstep\dstep V''\estep\dstep W}\xqvar
\Big)
\\&=
F_{U\nstep\bs\dstep V'\dstep\dstep V''\estep\bs\dstep W}\xqvar
\\&=
F_{U\nstep\bs\dstep V\estep\bs\dstep W}\xqvar
.
\end{eq*}
This proves the claim in the case $st=\dstep\nstep$.
The other two cases $st\in\{\nstep\nstep,\nstep\dstep\}$ follow in the exact same manner.
\end{proof}

The uniqueness in \reft{recursion} can now be shown using an intricate induction 
argument devised by M.~D'Adderio in the proof of \cite[Thm.~5.4]{DAdderio2020}.
Given a Schr{\"o}der path $P$ our strategy is to show that $F_P$ can 
be expressed in terms of elementary symmetric functions as well as 
functions $F_Q$ where $Q$ has size less than $P$, or $Q$ has fewer east 
steps than $P$, or $Q$ is obtained from $P$ by moving an east step of $P$ to the left.

\begin{proof}[Proof of uniqueness in \reft{recursion}.]
Let $P$ be a Schr{\"o}der path of size $n$ with $r$ east steps, and 
assume that $P=X\estep W$ where $X\in\{\nstep,\dstep\}^{*}$ defines 
a path from $(0,0)$ to $(x,z)$, and $W\in\{\nstep,\dstep,\estep\}$.
That is, $X$ is the initial segment of $P$ that leads up to the first east step of $P$.

We first use induction on the size $n$.
By \refti{recursion}{e} the only path of size $1$, namely $\nstep\estep$, is sent to $1$ by $F$.
Thus we may assume that $n>1$ and that $F_Q$ is 
uniquely determined for all $Q\in\SCH{m}$ for all $m<n$.

Secondly we use induction on $r$.
If $P$ has only one east step, then $P$ is of the 
form $P=\nstep\dstep^{n-1}\estep$ and $F_P=\elementaryE_{n}(\xvec)$ is 
determined by the initial condition in \refti{recursion}{e}.
Thus the base case of the second induction is taken care of.
We may assume that $P$ has at least two east steps, and that $F_Q$ 
is determined uniquely for all Schr{\"o}der paths $Q\in\SCH{n}$ with fewer than $r$ east steps.

Thirdly we use induction on $z$.
The base case for the third induction is $z=x+1$.
If $z=x+1$ then $X\estep$ and $W$ are Schr{\"o}der paths of size less than $n$.
Note that here we use the fact that $P$ contains two or more east steps. 
By \refti{recursion}{mult} and by induction on $n$ it follows that $F_P=F_{X\estep}F_W$ is determined uniquely.
Thus we may assume that $z>x+1$ and that $F_Q$ is determined uniquely 
for all $Q\in\SCH{n}$ with $r$ east steps that are of the form $Q=X'\estep W'$ 
where $X'\in\{\nstep,\dstep\}^*$ defines a path from $(0,0)$ to $(x',z')$ with $z'<z$.
Let $L\subseteq\SCH{n}$ denote the set of all such Schr{\"o}der paths.

Now if $X=Y\nstep$, that is $P=Y\nstep\estep W$, then $Y\dstep W$ 
is a Schr{\"o}der path of size $n$ with fewer east steps than $P$.
Note that here we use the fact that $z>x+1$.
Similarly $Y\estep\nstep W$ is a Schr{\"o}der path in $L$.
By \refti{recursion}{unicellular} and by induction on $r$ and $z$ it follows that $F_P=(q-1)F_{Y\dstep W}+F_{Y\estep\nstep W}$ is determined uniquely.

If on the other hand $X=Y\dstep$, that is $P=Y\dstep\estep W$, 
then let $P=Us\bs tV\dstep\bs\estep W$ be the bounce decomposition of $P$ at $(x,z)$.
Clearly $V\in\{\nstep,\dstep\}^*$ and $st\in\{\nstep\nstep,\nstep\dstep,\dstep\nstep\}$.
Thus by \reft{bounce_reduction} we may write $F_P$ as a linear 
combination of functions $F_Q$ for certain paths $Q\in L$.
Therefore $F_P$ is determined uniquely by induction on $z$.
This completes the proof.
\end{proof}

\begin{rem}{otherRecursion}
The conditions in \reft{recursion} were chosen because they are both natural and relatively easy to verify in the following sections.
However, there are various equivalent formulations that might be more practical in other contexts.
By \refp{equivalentRelations} one can replace the relations in \refti{recursion}{bounce} by one of the other sets of bounce relations given in \refpi{equivalentRelations}{B} or~\refi{equivalentRelations:dyck}.
Moreover, the multiplicativity condition \refti{recursion}{mult} can be dropped if one replaces \refti{recursion}{e} by the stronger initial condition that $F_P=e_{\lambda}$ for all partitions $\lambda$, where $P=\nstep\dstep^{\lambda_1-1}\estep\nstep\dstep^{\lambda_{2}-1}\estep\cdots\nstep\dstep^{\lambda_{\ell}-1}\estep$.
In this case a similar induction argument as in the proof above works.
We refer again to the proof of \cite[Thm.~5.4]{DAdderio2020}.
\end{rem}




Moreover we can prove a Dyck path analog of \reft{recursion}.
Let $\DYCK{}$ denote the set of all Dyck paths, that is, the set 
of Schr{\"o}der paths that contain no diagonal steps.
Note that using our conventions, if $P$ is a Dyck path and $(x,z)$ 
is a point on $P$ then the bounce path of $P$ at $(x,z)$ must have a single bounce point.

\begin{thm}{dyckRecursion}
The function that assigns to each Dyck path the corresponding unicellular LLT polynomial is the unique function $F:\DYCK{}\to\Lambda$, $P\mapsto F_P\xqvar$ that satisfies the following conditions:

\begin{enumerate}[(i)]
\myi{dyckRecursion:e}
For all $k\in\setN$ there holds the initial condition
\begin{eq*}
F_{\nstep(\nstep\estep
)^{k}\estep}(\xvec;q)
=
\sum_{\alpha \vDash k+1} (q-1)^{k-\length(\alpha)+1}\elementaryE_\alpha(\xvec)
.
\end{eq*}
\myi{dyckRecursion:mult}
The function $F$ is multiplicative, that is,
$F_{PQ}\xqvar
=
F_{P}\xqvar
F_{Q}\xqvar
$
for all $P,Q\in\DYCK{}$.
\setcounter{enumi}{5}
\myi{dyckRecursion:bounce}
Let $P\in\DYCK{}$ be a Dyck path and let $(x,z)\in\setZ^2$ be a point on $P$ with $x+1<z$.
If the bounce decomposition of $P$ at $(x,z)$ is given by $P=U\nstep\bs\nstep V\nstep\estep\bs\estep W$ then
\begin{eq*}
F_{U\nstep\bs\nstep V\nstep \estep\bs\estep W}\xqvar
&= (q+1)F_{U\nstep\bs\nstep V\estep\bs\nstep\estep W}\xqvar
- q F_{U\nstep\bs\nstep V\estep\bs\estep\nstep W}\xqvar.
\end{eq*}
If the bounce decomposition of $P$ at $(x,z)$ is given by $P=U\nstep\estep\bs\nstep V\nstep\estep\bs\estep W$ then
\begin{eq*}
&F_{U{\estep\nstep\bs\nstep}V{\estep\nstep\bs\estep}W}\xqvar
+
F_{U{\nstep\estep\bs\nstep}V{\nstep\estep\bs\estep}W}\xqvar
+
F_{U{\nstep\bs\nstep\estep}V{\estep\bs\estep\nstep}W}\xqvar = \\
&F_{U{\estep\nstep\bs\nstep}V{\nstep\estep\bs\estep}W}\xqvar
+
F_{U{\nstep\estep\bs\nstep}V{\estep\bs\estep\nstep}W}\xqvar
+
F_{U{\nstep\bs\nstep\estep}V{\estep\bs\nstep\estep}W}\xqvar.
\end{eq*}
\end{enumerate}
\end{thm}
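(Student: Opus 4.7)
The plan is to handle existence and uniqueness separately, mirroring the structure of the proof of \reft{recursion}.

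For existence I verify that $F_P = \LLT_P\xqvar$ satisfies the three conditions of \reft{dyckRecursion}. Multiplicativity (ii) follows from the fact that $\uig_{PQ}$ is the disjoint union of $\uig_P$ and a shifted copy of $\uig_Q$, so colorings and their ascents factor. The initial condition (i) amounts to computing the unicellular LLT polynomial of the path graph on $k+1$ vertices; its elementary expansion $\sum_{\alpha \vDash k+1}(q-1)^{k+1-\length(\alpha)}\elementaryE_\alpha$ can be verified by direct induction, or by appealing to the known expansion of the chromatic quasisymmetric function of a path graph together with the Carlsson--Mellit plethystic bridge. For the Dyck relations (vi) I appeal to \reft{LLT}, which provides the unicellular relation and the Schr{\"o}der bounce relations for $\LLT_P$, and then to the derivations inside the proof of \refp{equivalentRelations}, which pass from the latter to \refq{dyckBounceI} and \refq{dyckBounceII}.

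For uniqueness I adapt the induction used in the uniqueness proof of \reft{recursion}. The difficulty specific to this setting is that the unicellular relation is no longer available as a reduction tool, since it would introduce values $F_Q$ for $Q \notin \DYCK{}$. I propose to induct on the size $n$ of $P$ and, for fixed $n$, on the area $\area(P)$, defined as the number of cells strictly between $P$ and the main diagonal. If $P$ factors nontrivially as $P = P_1 P_2$ at an interior return to the diagonal, multiplicativity (ii) together with the inductive hypothesis on $n$ determine $F_P$. Otherwise $P$ is irreducible, and among irreducible Dyck paths of size $n$ the staircase $\nstep(\nstep\estep)^{n-1}\estep$ uniquely attains the minimum possible area $n-1$ and is handled by (i). For any other irreducible $P$, the goal is to locate a bounce configuration inside $P$ matching the left-hand side of \refq{dyckBounceI} or \refq{dyckBounceII}; solving the corresponding relation for $F_P$ expresses it as a $K$-linear combination of $F$-values on Dyck paths of strictly smaller area, closing the induction.

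The main obstacle is proving that every non-staircase irreducible Dyck path admits at least one applicable bounce configuration of the single-bounce-point form required by (vi), and that each term appearing on the right-hand side of the chosen relation has strictly smaller area than $P$. The key combinatorial observation is that such a $P$ must begin with $\nstep\nstep$ and end with $\estep\estep$, and therefore contains a pattern of the form $U\nstep\bs\nstep V\nstep\estep\bs\estep W$ or the analogous pattern underlying \refq{dyckBounceII}, with $V$ consisting only of north steps. A direct inspection of how a local swap of $\nstep\estep$ with $\estep\nstep$ affects the cells beneath $P$ then confirms that each right-hand path has area strictly less than $\area(P)$. Verifying the existence of the required configuration in full generality, and checking that every right-hand term is reached by the inductive hypothesis, is the crux of the argument; once it is in place, existence and uniqueness together identify $F$ with $\LLT_P\xqvar$ on all of $\DYCK{}$.
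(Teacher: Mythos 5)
Your existence argument is fine and is essentially the paper's: multiplicativity and the initial condition are checked directly (the paper gets (i) from \refc{expansion}), and the Dyck relations for $\LLT_P$ follow from \reft{LLT} together with \refp{equivalentRelations}. The genuine gap is in the uniqueness half, exactly at the step you flag as the crux, and it is not merely an unverified detail: as stated, the induction on $(n,\area(P))$ fails. First, the claim that every other term of the chosen relation has strictly smaller area is false for the six-term relation \refq{dyckBounceII}: writing $P=U\nstep\estep\bs\nstep V\nstep\estep\bs\estep W$, the term $F_{U\nstep\bs\nstep\estep V\estep\bs\nstep\estep W}$ on the other side has exactly the same area as $P$ (near the bounce endpoint the east step moves one step later, raising the path by one cell, while near the bounce start an east step moves one step earlier, lowering it by one cell), so solving for $F_P$ leaves one term not covered by the inductive hypothesis. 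Second, you cannot always avoid \refq{dyckBounceII} by using \refq{dyckBounceI}: for $P=\nstep\nstep\estep\nstep\nstep\estep\estep\estep$, which is irreducible and not the staircase, the only point $(x,z)$ with $z>x+1$ and local pattern $\nstep\estep\bs\estep$ is $(2,4)$, and there the bounce endpoint $(1,2)$ is flanked by $\estep\bs\nstep$, so only \refq{dyckBounceII} applies, and the equal-area path $\nstep\nstep\nstep\estep\estep\nstep\estep\estep$ (area $4$, same as $P$) occurs in it. In addition, candidate points whose bounce endpoint is preceded by $\estep\estep$ admit neither relation, so your assertion that a configuration "with $V$ consisting only of north steps" always exists needs a proof; it is plausible but not automatic. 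To repair the argument you would need both an existence lemma for applicable configurations and a strictly decreasing statistic finer than area (or a two-step reduction handling the equal-area partner), none of which is supplied.

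For comparison, the paper sidesteps all of this: it extends $F$ uniquely from $\DYCK{}$ to $\SCH{}$ by forcing the unicellular relation $(q-1)\bar F_{U\dstep V}=\bar F_{U\nstep\estep V}-\bar F_{U\estep\nstep V}$, observes that $\bar F$ is multiplicative and inherits the relations \refpi{equivalentRelations}{dyck} (eliminate the diagonal steps with the unicellular relation, apply the hypothesis on Dyck paths with the same bounce data, then reintroduce them), and then concludes uniqueness from \refp{equivalentRelations} and \reft{recursion}. Unless you close the two gaps above, I recommend switching to that Schr{\"o}der-extension route.
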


\begin{proof}
Suppose that $F:\DYCK{}\to\Lambda$ satisfies the conditions in the theorem.
First note that $F$ can be extended to a function $\bar{F}:\SCH{}\to\Lambda$ on Schr{\"o}der paths in a unique way such that the unicellular relation \refpi{equivalentRelations}{unicellular} holds.
Moreover $\bar{F}$ is multiplicative on Schr{\"o}der paths.

We next show that a function $\bar{F}$ satisfies the relations \refpi{equivalentRelations}{dyck}.
To see this let $P$ be a Schr{\"o}der path and $(x,z)\in\setZ^2$ be a point on $P$ with $x+1<z$, such that the bounce path of $P$ at $(x,z)$ has only one bounce point, and such that the bounce decomposition of $P$ at $(x,z)$ is of the form $P=U{\nstep\bs\nstep}V{\nstep\estep\bs\estep}W$ for some $U,V,W\in\{\nstep,\dstep,\estep\}^*$.
Using the unicellular relation (many times) we can write $\bar{F}_{P}$ as a linear combination of symmetric functions of $\bar{F}_Q=F_Q$, where $Q$ is a Dyck path of the form $Q=U'{\nstep\bs\nstep}V'{\nstep\estep\bs\estep}W'$.
In particular the bounce paths of $P$ and $Q$ at $(x,z)$ coincide.
We may now apply the linear relation from the assumption to the symmetric functions $F_Q$ and use the unicellular relation to reintroduce the diagonal steps.
In this way we have show that $\bar{F}$ satisfies \refq{dyckBounceI} and the same can be done for \refq{dyckBounceII}.

It now follows from \refp{equivalentRelations} and \reft{recursion} that the function $\bar{F}$, and in particular the function $F$ must be unique.
Moreover, by \reft{LLT} and \refp{equivalentRelations} the function that assigns to each Dyck path the corresponding unicellular LLT polynomial satisfies Conditions~\refi{dyckRecursion:mult} and \refi{dyckRecursion:bounce} above.
The fact that this function also satisfy the initial 
condition in~\refi{dyckRecursion:e} is an easy consequence of the $\elementaryE$-expansion in \refc{expansion}\footnote{The $\elementaryE$-expansion of the unicellular LLT polynomial of the path graph was also obtained in \cite[Prop.~5.18]{AlexanderssonPanova2016}}.
This completes the proof.
\end{proof}

\section{Bijections on colorings}
\label{Section:colorings}

In this section we prove \reft{LLT}.
We first verify 
\reft{recursion}~\refi{recursion:e}--\refi{recursion:unicellular}
 in \refp{coloringProperties}.
We then proceed to demonstrate the two bounce relations 
of \refti{recursion}{bounce} in Theorems~\ref{Theorem:colorings:bounceI} and~\ref{Theorem:colorings:bounceII}.
The method of proof is purely bijective and similar to that in \cite[Prop.~18]{Alexandersson2019llt}.
Indeed, \refi{recursion:unicellular} as well as the $st=\nstep\nstep$ case of \refi{recursion:bounce} were already proved therein.
To obtain the full claim of \reft{LLT} one can then use the results of \refs{bounce}, namely \refp{equivalentRelations} and \reft{bounce_reduction}.
We remark that it is also possible to give a bijective proof for the third type of bounce relation in \reft{LLT}, that is, the case $st=\nstep\dstep$, if there is only one bounce point.
However, \reft{bounce_reduction} is currently our only way to prove bounce relations with many bounce points.

In the proof of \reft{bounce_reduction},
we used diagrams to get a good overview of the recursions.
In this section we use such diagrams to denote 
weighted sums over vertex colorings of decorated unit-interval graphs. 
To simplify the notation, we shall only 
show the vertices and edges of the diagrams which matter.
The following example introduces the necessary conventions.

\begin{exa}{coloringSum}
The LLT polynomials $\LLT_P(\xvec;q)$ and 
$\LLT_Q(\xvec;q)$ for $P=\texttt{nndneee}$
and $Q=\texttt{nnddee}$ are given as sums over vertex colorings.
We let $A(\xvec;q)$ be the sum over colorings
of $\Gamma_P$, with the extra condition that $\kappa(2)\geq \kappa(4)$.
Expressed using diagrams, we have
\[
\LLT_P(\xvec;q) = 
\begin{ytableau}
*(lightgray) \; & \cdot & \; &  *(yellow) 4\\
*(lightgray) \to & \; &  *(yellow) 3\\
\; &  *(yellow) 2\\
*(yellow)  1
\end{ytableau}
\qquad 
\LLT_Q(\xvec;q) = 
\begin{ytableau}
*(lightgray) \; & *(lightgray) \to & \; &  *(yellow) 4\\
*(lightgray) \to & \; &  *(yellow) 3\\
\; &  *(yellow) 2\\
*(yellow)  1
\end{ytableau}
\qquad 
A(\xvec;q) = 
\begin{ytableau}
*(lightgray) \; & *(lightgray) \dnw & \; &  *(yellow) 4\\
*(lightgray) \to & \; &  *(yellow) 3\\
\; &  *(yellow) 2\\
*(yellow)  1
\end{ytableau}
.
\]
Note that we use $\mathdefin{\dnw}$ to impose a \emph{weak} inequality.
That is, if an edge $\edge{x}{y}$ is marked with $\dnw$,
then we require that $\kappa(x)\geq \kappa(y)$.
In the same manner, $\mathdefin{\dns}$ is used to 
indicate a strict inequality which is a non-ascent.
We utilize the convention that edges in gray boxes do not contribute 
to the ascent statistic (this is consistent with marking all strict edges gray). 
Moreover, edges of particular importance 
are marked with a center-dot; we shall possibly refine our argument 
depending on if the colorings under consideration have this edge as ascending or not.

Now, a simple bijective argument shows that
\[
 \LLT_P(\xvec;q) = q \times \LLT_Q(\xvec;q) + A(\xvec;q),
\]
since a coloring on the left-hand side appears as 
a coloring of exactly one of the diagrams in the righ hand side.
Note that it is only the edge from $2$ to $4$ that really matters.

Stated in full generality, for any edge $\dedge{w}{y}$ in any diagram,
we have the identity
\begin{eq}{colorCases}
\ytableausetup{boxsize=1.0em}
\begin{ytableau}
\cdot &  *(yellow) y\\
*(yellow)  w
\end{ytableau}
=
q \times 
\begin{ytableau}
*(lightgray) \to &  *(yellow) y\\
*(yellow)  w
\end{ytableau}
+
\begin{ytableau}
*(lightgray) \dnw &  *(yellow) y\\
*(yellow)  w
\end{ytableau}.
\end{eq}
This simply expresses the fact that a set of colorings $\kappa$
can be partitioned into two sets, the first set where $\kappa(w)>\kappa(y)$ and 
the second set where $\kappa(w) \leq \kappa(y)$.
\end{exa}

Note that most diagrams we use do not represent vertical-strip LLT polynomials,
but merely a sum $\sum_{\kappa} q^{\asc(\kappa)} x_{\kappa(1)} \dotsb x_{\kappa(n)}$,
where the sum ranges over colorings compatible with the restrictions 
imposed by the diagram.

\begin{prop}{coloringProperties}
Let $F:\SCH{}\to\Lambda$ be the function 
that assigns to each Schr{\"o}der path $P$ the symmetric 
function $F_P=\LLT_P(\xvec;q)$ defined in~\refq{lltDefinition}.
\begin{enumerate}[(i)]
\myi{coloring:init}
Then $F$ satisfies the initial condition $F_{\nstep\dstep^k\estep}=\elementaryE_{k+1}$ for all $k\in\setN$.

\myi{coloring:mult}
The function $F$ is multiplicative, that is, $F_{PQ} = F_P F_Q$ for all $P,Q\in\SCH{}$.

\myi{coloring:unicellular}
For all $U,V\in\{\nstep,\dstep,\estep\}^*$ such that $U\dstep V\in\SCH{}$ we have
$F_{U\nstep\estep V} - F_{U\estep\nstep V} = (q-1) F_{U\dstep V}$.
\end{enumerate}
\end{prop}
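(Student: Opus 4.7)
The plan is to verify each of the three parts by a direct analysis of the decorated unit-interval graph $\uig_P$ and the defining sum in \refq{lltDefinition}. For part \refi{coloring:init}, the first step is to determine $\uig_P$ for $P=\nstep\dstep^{k}\estep$. The path visits $(0,0),(0,1),(1,2),\dotsc,(k,k+1),(k+1,k+1)$, hugging the diagonal as tightly as possible, so the only cells touched are those containing the $k$ diagonal steps. Consequently $\uig_P$ is a path on $[k+1]$ whose only edges are the $k$ strict edges $\{i,i+1\}$ coming from those diagonal steps, with no non-strict edges at all. A coloring is then a strictly increasing map $\kappa:[k+1]\to\setNN$ with $\asc(\kappa)=0$, and the generating function collapses to $\elementaryE_{k+1}(\xvec)$.

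For part \refi{coloring:mult}, the concatenated path $PQ$ meets the diagonal at the junction $(n,n)$, so $P$ lives inside $\{(x,y):x\le n,\,y\le n\}$ and $Q$ inside $\{(x,y):x\ge n,\,y\ge n\}$. Any cell with column in $[1,n]$ and row in $[n+1,n+m]$ has interior strictly above $y=n$ and weakly to the left of $x=n$, hence lies in neither sub-triangle and is not below $PQ$. Therefore $\uig_{PQ}$ is the disjoint union of $\uig_P$ on $[n]$ and a shifted copy of $\uig_Q$ on $\{n+1,\dotsc,n+m\}$, with strict-edge subsets splitting in the same way. A coloring of $\uig_{PQ}$ is then a pair of independent colorings of the two factors, and both $\asc$ and the monomial weight factor multiplicatively, yielding $F_{PQ}=F_PF_Q$.

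For part \refi{coloring:unicellular}, let $(a,b)$ be the endpoint of $U$; the hypothesis $U\dstep V\in\SCH{}$ forces $b>a$, so the cell with top-right corner $(a+1,b+1)$ lies strictly above the diagonal. The three paths $U\nstep\estep V$, $U\estep\nstep V$, and $U\dstep V$ all pass through $(a,b)$ and $(a+1,b+1)$ and agree outside this single cell. The three decorated graphs therefore share every edge except $\{a+1,b+1\}$, which is non-strict in $\uig_{U\nstep\estep V}$, absent in $\uig_{U\estep\nstep V}$, and strict in $\uig_{U\dstep V}$. Partitioning the colorings of $\uig_{U\nstep\estep V}$ and $\uig_{U\estep\nstep V}$ according to whether $\kappa(a+1)<\kappa(b+1)$ as in \refq{colorCases}, the ascending colorings contribute $q\cdot F_{U\dstep V}$ to $F_{U\nstep\estep V}$ and $F_{U\dstep V}$ to $F_{U\estep\nstep V}$, while the non-ascending colorings contribute identically to both sums. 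Subtracting gives $F_{U\nstep\estep V}-F_{U\estep\nstep V}=(q-1)F_{U\dstep V}$.

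The main obstacle, in my view, is geometric rather than algebraic: one must carefully verify that in \refi{coloring:unicellular} the three swaps $\nstep\estep \leftrightarrow \estep\nstep \leftrightarrow \dstep$ leave all cells outside the relevant unit square unchanged, and in \refi{coloring:mult} that no cell below $PQ$ straddles the junction $(n,n)$. Once these geometric stability checks are secured, all three identities reduce to transparent bookkeeping on the generating function defining $\LLT_P\xqvar$.
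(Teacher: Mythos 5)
Your proposal is correct and follows essentially the same route as the paper: a direct computation of $\uig_{\nstep\dstep^k\estep}$ for \refi{coloring:init}, the disjoint-union decomposition of $\uig_{PQ}$ for \refi{coloring:mult}, and for \refi{coloring:unicellular} the observation that the three graphs differ only in the single edge at the corner cell, followed by the case split of \refq{colorCases} on whether that edge ascends. The only difference is cosmetic: you spell out the geometric stability checks that the paper leaves implicit in its diagram notation.
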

\begin{proof}
For \refi{coloring:init}, we have by definition that $F_{\nstep\dstep^k\estep}$ 
is a sum over colorings $\kappa:[k+1] \to \setNN$,
such that $\kappa(1)<\kappa(2)<\dotsb < \kappa(k+1)$.
Now it is clear that we indeed obtain $\elementaryE_{k+1}$.

Moreover, \refi{coloring:mult} is immediate from the definition,
as the graph $\Gamma_{PQ}$ is (up to a relabeling of the vertices) the disjoint union of the graphs
$\Gamma_P$ and $\Gamma_Q$. 
Colorings can thus be done on each component independently and the statement follows.

It remains to prove  \refi{coloring:unicellular}.
Using diagrams as in \refq{colorCases}, this identity can be expressed as follows:
\[
\ytableausetup{boxsize=1.0em}
\begin{ytableau}
\; &  *(yellow) y\\
*(yellow)  w
\end{ytableau}
-
\begin{ytableau}
*(lightgray) &  *(yellow) y \\
*(yellow) w
\end{ytableau}
=
(q-1)\;
\begin{ytableau}
*(lightgray) \to & *(yellow) y  \\
*(yellow) w
\end{ytableau}
\iff
\begin{ytableau}
\; &  *(yellow) y\\
*(yellow)  w
\end{ytableau}
+
\begin{ytableau}
*(lightgray) \to & *(yellow)  y \\
*(yellow)  w
\end{ytableau}
=
q\;
\begin{ytableau}
*(lightgray) \to & *(yellow) y  \\
*(yellow) w
\end{ytableau}
+
\begin{ytableau}
*(lightgray) &  *(yellow) y \\
*(yellow) w
\end{ytableau}
.
\]
By splitting the first and the last diagram into subcases 
depending on whether $\kappa(w)<\kappa(y)$ or $\kappa(w) \geq \kappa(y)$
as in \refq{colorCases},
\[
\ytableausetup{boxsize=0.85em}
\left(
\begin{ytableau}
*(lightgray) \dnw &  *(yellow) y \\
*(yellow) w
\end{ytableau}
+
q\;
\begin{ytableau}
*(lightgray) \to &  *(yellow) y \\
*(yellow) w
\end{ytableau}
\right)
+
\begin{ytableau}
*(lightgray) \to & *(yellow) y \\
*(yellow) w
\end{ytableau}
=
q\;
\begin{ytableau}
*(lightgray) \to & *(yellow) y \\
*(yellow) w
\end{ytableau}
+
\left(
\begin{ytableau}
*(lightgray) \dnw &  *(yellow) y \\
*(yellow) w
\end{ytableau}
+
\begin{ytableau}
*(lightgray) \to &  *(yellow) y \\
*(yellow) w
\end{ytableau}
\right)
\]
it becomes evident that the identity holds.
\end{proof}

In the remainder of this section we assume that $x+1 = y$
so that they are adjacent entries on the diagram diagonal.

\subsection{Bounce relations for colorings}\label{sec:swapMap}

We need a simple bijection between certain colorings
which is referred to as the \defin{swap map}.
The swap map sends a coloring $\kappa$ to the coloring $\kappa'$ defined as 
\begin{equation}\label{eq:swapMap}
\kappa'(j) \coloneqq 
\begin{cases}
 \kappa(j) & \text{ if } j \notin \{x,y\} \\
 \kappa(y) & \text{ if } j =x \\
 \kappa(x) & \text{ if } j =y.
\end{cases}
\end{equation}
This map is used in all further proofs in this section.

\begin{lem}{swapMap}[Swap Map]
Let $P \in \SCH{}$ and let $(x,z-1)\in\setZ^2$ be a point on $P$ such that the bounce path of $P$ at $(x,z-1)$ has one bounce point, and such that the bounce decomposition of $P$ at $(x,z-1)$ is given by $P=U\nstep\bs\nstep V\estep\bs\estep W$.
Set $y=x+1$. Then 
\[
 \sum_{\kappa : \kappa(x)<\kappa(y)} q^{\asc(\kappa)} x_{\kappa(1)}
 \dotsb x_{\kappa(n)} = q \times 
 \sum_{\kappa' : \kappa'(x)>\kappa'(y)} q^{\asc(\kappa')} x_{\kappa'(1)}
 \dotsb x_{\kappa'(n)},
\]
where we sum over all colorings of $P$ on both sides, 
with the indicated added restrictions on the colors of $x$ and $y$.
\end{lem}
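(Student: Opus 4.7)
The plan is to show that the swap map $\kappa\mapsto\kappa'$ defined in~\eqref{eq:swapMap}, which merely exchanges the values $\kappa(x)$ and $\kappa(y)$, already realises the claimed identity. Exchanging two color values preserves the monomial $x_{\kappa(1)}\dotsb x_{\kappa(n)}$ trivially, and the map is an involution, so once we know that it sends valid colorings to valid colorings it restricts to a bijection between colorings with $\kappa(x)<\kappa(y)$ and colorings with $\kappa'(x)>\kappa'(y)$. The whole task therefore reduces to: (a)~showing that no strict edge of $\Gamma_P$ is incident to $x$ or $y$, so that validity is automatic, and (b)~verifying the ascent identity $\asc(\kappa)=\asc(\kappa')+1$, which yields the factor of $q$.

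The bounce hypothesis pins down the geometry of $P$ around columns $x$ and $x+1$ rigidly. Write $h_{\max}(c)$ for the maximum height attained by $P$ at column $c$. Because $s_3=s_4=\estep$, the path enters column $x$ at $(x,z-1)$ via an east step and exits to $(x+1,z-1)$ via another, so column $x$ is traversed at the single height $z-1$, no diagonal step of $P$ touches columns $x$ or $x+1$, and $h_{\max}(x-1)=h_{\max}(x)=z-1$. Because $s_1=s_2=\nstep$ and $(u_2,x)$ is the rightmost point of $P$ at height $x$, the path cannot travel east at height $x$ just before reaching $(u_2,x)$, so height $x$ is attained only at column $u_2$; hence $h_{\max}(u_2-1)\leq x-1$ while $h_{\max}(u_2)\geq x+1$, and in particular the function $h_{\max}$ never takes the value $x$.

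Claim~(a) now follows at once, since a strict edge of $\Gamma_P$ incident to $x$ or $y$ would require a diagonal step of $P$ either starting or ending in column $x$ or $x+1$, or ending at height $x$ or $x+1$, each of which is excluded by the structural facts above. For~(b), the edge $\{x,y\}$ is a non-strict edge of $\Gamma_P$ (since $z-1\geq x+1$), is an ascent of $\kappa$ (by $\kappa(x)<\kappa(y)$), and is not an ascent of $\kappa'$. For any other vertex $v$ the candidate edges incident to $v$ and $\{x,y\}$ come in natural pairs: $\{v,x\}$ and $\{v,y\}$ when $v<x$, or $\{x,v\}$ and $\{y,v\}$ when $v>y$. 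The fact that $h_{\max}$ skips the value $x$ implies that in each pair of the first type both edges are non-strict or neither is an edge, and the equality $h_{\max}(x-1)=h_{\max}(x)$ does the same for pairs of the second type. On each such paired set of non-strict edges, swapping $\kappa(x)$ and $\kappa(y)$ simply exchanges the ascent statuses of the two edges, so the total ascent contribution is preserved. The only net change comes from $\{x,y\}$, yielding $\asc(\kappa)=\asc(\kappa')+1$.

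The main obstacle is the careful bookkeeping behind the edge-pairing: one must check that no ``orphaned'' potential edge incident to $x$ or $y$ remains unmatched by the swap. Once the structural equalities $c_{\min}(x)=u_2$ and $h_{\max}(x-1)=z-1=h_{\max}(x)$ extracted from the bounce decomposition are in hand, this verification reduces to a direct case-check, perhaps most cleanly presented using the diagram conventions of~\refe{coloringSum}.
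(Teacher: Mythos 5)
Your proposal is correct and follows essentially the same route as the paper: it uses the swap map of~\eqref{eq:swapMap}, observes that $x$ and $y=x+1$ are incident to no strict edges and share the same neighbours apart from each other (the paper reads this off the bounce decomposition via its diagrams, exactly the ranges $w<j<x$ and $y<j<z$ you identify), and concludes that ascent statuses are exchanged in pairs so that only the ascent on $\edge{x}{y}$ is lost, producing the factor $q$. Your write-up merely makes the structural bookkeeping (absence of strict edges at $x,y$ and the matching of neighbourhoods) more explicit than the paper's two-line argument.
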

\begin{proof}
Expressing the statement as an identity of diagrams, we have
\begin{equation}\label{eq:coloringsSwapMap}
\includegraphics[page=10,scale=0.8,valign=c]{figuresBounceRelations.pdf}%
=
q \times \;
\includegraphics[page=11,scale=0.8,valign=c]{figuresBounceRelations.pdf}%
.
\end{equation}
In order to prove \eqref{eq:coloringsSwapMap} we show that the swap map $\kappa\mapsto\kappa'$ preserves the number of ascents, not counting the contribution of the edge $\edge{x}{y}$.
Note that for all $j$ with $w<j<x$  or $y <j<z$ we have that 
\begin{eq*}
\kappa(x)<\kappa(j) &\iff \kappa'(y)<\kappa'(j),
\\
\kappa(y)<\kappa(j) &\iff \kappa'(x)<\kappa'(j).
\end{eq*}
This implies that for any coloring $\kappa$ appearing in 
the left-hand side of \eqref{eq:coloringsSwapMap}, 
we have that $\asc(\kappa) = 1+\asc(\kappa')$.
\end{proof}

We proceed by showing that the vertical-strip LLT polynomials satisfy the 
two bounce relations in \reft{recursion}.

\begin{thm}{colorings:bounceI}[The first bounce relation for colorings]
Let $P\in\SCH{}$ be a Schr{\"o}der path and let $(x,z)\in\setZ^2$ 
be a point on $P$ with $z>x+1$ such that the bounce path of $P$ at $(x,z)$ 
has a single bounce point, and such that the bounce 
decomposition of $P$ at $(x,z)$ is 
given by $P=U\nstep\bs\nstep V\dstep\bs\estep W$.
Then
\begin{eq*}
\LLT_{P}(\xvec;q) = q\LLT_{U\nstep\bs\nstep V\estep\bs\dstep W}(\xvec;q) .
\end{eq*}
\end{thm}
\begin{proof}
We want to prove the following identity expressed as diagrams:
\begin{eq}{eq:colorings:bounceI}
\includegraphics[page=12,scale=0.8,valign=c]{figuresBounceRelations.pdf}%
=
q \times \;
\includegraphics[page=13,scale=0.8,valign=c]{figuresBounceRelations.pdf}%
.
\end{eq}
If $\dedge{x}{z}$ and $\dedge{y}{z}$ are ascending edges on both sides, 
we use the identity map on the set of colorings. 
By using the transitivity of the forced inequalities, 
it then suffices to prove the identity
\begin{eq*}
\ytableausetup{boxsize=1.0em}
\begin{ytableau}
*(lightgray)      & *(lightgray)  \to & \dnw & *(yellow) z\\
 \cdot & \to  & *(yellow) y\\
 \cdot &  *(yellow) x\\
*(yellow)  w
\end{ytableau}
=
q 
\;
\times
\;
\begin{ytableau}
*(lightgray)      & *(lightgray)  \dnw & *(lightgray)  \to & *(yellow) z\\
 \cdot & \dns  & *(yellow) y\\
 \cdot &  *(yellow) x\\
*(yellow)  w
\end{ytableau}
.
\end{eq*}
But this follows from \refl{swapMap}.
\end{proof}

\begin{thm}{colorings:bounceII}[The second bounce relation for colorings]
Let $P\in\SCH{}$ be a Schr{\"o}der path and let $(x,z)\in\setZ^2$ 
be a point on $P$ with $z>x+1$ such that the bounce path of $P$ 
at $(x,z)$ has a single bounce point, and such that the 
bounce decomposition of $P$ at $(x,z)$ is given 
by $P=U\dstep\bs\nstep V\dstep\bs\estep W$.
Then
\begin{eq*}
\LLT_{P}(\xvec;q)
=
\LLT_{U\nstep\bs\dstep V\estep\bs\dstep W}(\xvec;q).
\end{eq*}
\end{thm}
\begin{proof}
We want to prove the identity expressed as diagrams in 
\refq{LLT_caseII_diagram}. For convenience, it is presented here as follows:
\[
\ytableausetup{boxsize=1.0em}
\begin{ytableau}
*(lightgray)      & *(lightgray)  \to & \cdot & *(yellow) z\\
 *(lightgray) \cdot & \cdot  & *(yellow)  y\\
 *(lightgray) \to &  *(yellow)  x\\
*(yellow)   w
\end{ytableau}
\quad
=
\quad
\begin{ytableau}
*(lightgray)    & *(lightgray)  \cdot & *(lightgray)  \to & *(yellow)  z\\
 *(lightgray) \to & \cdot  & *(yellow)  y\\
 \cdot &  *(yellow)  x\\
*(yellow)   w
\end{ytableau}.
\]
We split both sides into cases according to \refq{colorCases}:
\[
\ytableausetup{boxsize=0.8em}
\begin{ytableau}
*(lightgray)      & *(lightgray)  \to & \to & *(yellow)  z\\
 *(lightgray) \to & \cdot  & *(yellow)  y\\
 *(lightgray) \to &  *(yellow)  x\\
*(yellow)   w
\end{ytableau}
+
\begin{ytableau}
*(lightgray)      & *(lightgray)  \to & \dnw & *(yellow)  z\\
 *(lightgray) \to & \cdot  & *(yellow)  y\\
 *(lightgray) \to &  *(yellow)  x\\
*(yellow)   w
\end{ytableau}
+
\begin{ytableau}
*(lightgray)      & *(lightgray)  \to & \to & *(yellow)  z\\
 *(lightgray) \dnw & \cdot  & *(yellow)  y\\
 *(lightgray) \to &  *(yellow)  x\\
*(yellow)   w
\end{ytableau}
+
\begin{ytableau}
*(lightgray)      & *(lightgray)  \to & \dnw & *(yellow)  z\\
 *(lightgray) \dnw & \cdot  & *(yellow)  y\\
 *(lightgray) \to &  *(yellow)  x\\
*(yellow)   w
\end{ytableau}
=
\begin{ytableau}
*(lightgray)    & *(lightgray)  \to & *(lightgray)  \to & *(yellow)  z\\
 *(lightgray) \to & \cdot  & *(yellow)  y\\
 \to &  *(yellow)  x\\
*(yellow)   w
\end{ytableau}
+
\begin{ytableau}
*(lightgray)    & *(lightgray)  \dnw & *(lightgray)  \to & *(yellow)  z\\
 *(lightgray) \to & \cdot  & *(yellow)  y\\
 \to &  *(yellow)  x\\
*(yellow)   w
\end{ytableau}
+
\begin{ytableau}
*(lightgray)    & *(lightgray)  \to & *(lightgray)  \to & *(yellow)  z\\
 *(lightgray) \to & \cdot  & *(yellow)  y\\
 \dnw &  *(yellow)  x\\
*(yellow)   w
\end{ytableau}
+
\begin{ytableau}
*(lightgray)    & *(lightgray)  \dnw & *(lightgray)  \to & *(yellow)  z\\
 *(lightgray) \to & \cdot  & *(yellow)  y\\
 \dnw &  *(yellow)  x\\
*(yellow)   w
\end{ytableau}
.
\]
The first term on both sides cancel, and there are no 
colorings compatible with the inequalities in the last term on both sides.
Moreover, transitivity forces the last inequality in the remaining cases:
\[
\ytableausetup{boxsize=0.9em}
\begin{ytableau}
*(lightgray)      & *(lightgray)  \to & \dnw & *(yellow)  z\\
 *(lightgray) \to & \to  & *(yellow)  y\\
 *(lightgray) \to &  *(yellow)  x\\
*(yellow)   w
\end{ytableau}
+
\begin{ytableau}
*(lightgray)      & *(lightgray)  \to & \to & *(yellow)  z\\
 *(lightgray) \dnw & \dn  & *(yellow)  y\\
 *(lightgray) \to &  *(yellow)  x\\
*(yellow)   w
\end{ytableau}
=
\begin{ytableau}
*(lightgray)    & *(lightgray)  \dnw & *(lightgray)  \to & *(yellow)  z\\
 *(lightgray) \to & \dn  & *(yellow)  y\\
 \to &  *(yellow)  x\\
*(yellow)   w
\end{ytableau}
+
\begin{ytableau}
*(lightgray)    & *(lightgray)  \to & *(lightgray)  \to & *(yellow)  z\\
 *(lightgray) \to & \to  & *(yellow)  y\\
 \dnw &  *(yellow)  x\\
*(yellow)   w
\end{ytableau}
.
\]
All four remaining diagrams have the same $q$-weight, namely one fixed ascent (recall the convention for shaded boxes).
The swap map sends the terms on the left-hand side
to the terms on the right-hand side. This concludes the proof.
\end{proof}

\section{Bijections on orientations}
\label{Section:orientations}

In this section we prove \reft{LLTe}.
To be precise we first treat the easier claims in \refp{O:e-mult-diag}.
We then show that the symmetric functions $\LLTe_P(\xvec;q)$ satisfy the bounce relations of \refti{recursion}{bounce} in Theorems~\ref{Theorem:orientations:bounceI} and~\ref{Theorem:orientations:bounceIII}.
Since we want to work with orientations we actually show that the symmetric functions $\LLTe_P(\xvec;q+1)$ satisfy a shifted version of these relations.
Our proofs are purely bijective and the involved bijections are very simple.
However, the proofs of the bounce relations are a bit tedious in the sense that they require the distinction of quite a few different cases.
We remark that the third type of bounce relations in \reft{LLT}, ending in a pattern $st=\nstep\dstep$, can be given a bijective proof in similar style, assuming that there is only one bounce point.

As with the colorings, we use diagrams to illustrate the different cases.
However, in this section each diagram represents a \emph{sum over orientations} $\theta$
weighted by $q^{\asc(\theta)}\elementaryE_{\hrvpp(\theta)}$.

\begin{exa}{orientation_diagrams}
The symmetric function $\LLTe_Q(\xvec;q+1)$ where $Q=\texttt{nnddee}$ is described by the diagram
\begin{eq*}
 \LLTe_Q(\xvec;q+1) = 
\ytableausetup{boxsize=1.0em}
\begin{ytableau}
*(lightgray) \; & *(lightgray) \to &  &  *(yellow) 4\\
*(lightgray) \to & \; &  *(yellow) 3\\
\; &  *(yellow) 2\\
*(yellow)  1
\end{ytableau}
\end{eq*}
which represents a sum over eight orientations (all possible ways to orient the edges $\edge{1}{2}$, $\edge{2}{3}$ and $\edge{3}{4}$).
By specifying additional edges in the diagram,
we obtain a sum over a smaller set of orientations. For example,
the two following diagrams represent the indicated sums.
\begin{eq*}
\begin{ytableau}
*(lightgray)   & *(lightgray) \to & \dn &  *(yellow)  4\\
*(lightgray) \to &   &  *(yellow)  3\\
\dn &  *(yellow)  2\\
*(yellow)   1
\end{ytableau}
= (1+q) \elementaryE_{22}
\qquad
\text{and}
\qquad 
 \begin{ytableau}
*(lightgray)   & *(lightgray) \to & \dn &  *(yellow)  4\\
*(lightgray) \to &   &  *(yellow)  3\\
*(lightgray) \to &  *(yellow)  2\\
*(yellow)   1
\end{ytableau}
= (1+q)\elementaryE_{31}
\end{eq*}
Notice that in the second diagram, we again use the convention
that edges marked gray do not contribute to ascents.
\end{exa}

We start out by proving that the function $P\mapsto\LLTe_P(\xvec;q+1)$ satisfies the correct initial conditions, is multiplicative, 
and obeys the shifted version of the unicellular relation.
\begin{prop}{O:e-mult-diag}
Let $F:\SCH{}\to\Lambda$ be the function that assigns to 
each Schr{\"o}der path $P$ the symmetric function $F_P=\LLTe_P(\xvec;q+1)$ defined in~\refq{conjFormula}.
\begin{enumerate}[(i)]
\myi{O:e-mult-diag:e}
Then $F$ satisfies the initial condition $F_{\nstep\dstep^k\estep}=\elementaryE_{k+1}$ for all $k\in\setN$.

\myi{O:e-mult-diag:mult}
The function $F$ is multiplicative, that is, $F_{PQ}=F_P F_Q$ for all $P,Q\in\SCH{}$.

\myi{O:e-mult-diag:diag}
For all $U,V\in\{\nstep,\dstep,\estep\}^*$ such that $U\dstep V\in\SCH{}$ we have
$F_{U\nstep\estep V} - F_{U\estep\nstep V} = q F_{U\dstep V}$.
\end{enumerate}
\end{prop}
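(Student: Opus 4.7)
Parts~\refi{O:e-mult-diag:e} and~\refi{O:e-mult-diag:mult} I would dispatch first, as both are immediate from the definitions. For~\refi{O:e-mult-diag:e}, the path $P=\nstep\dstep^k\estep$ has its $k$ diagonal steps ending at $(1,2),(2,3),\dotsc,(k,k+1)$, and no other cells fall below the path, so $\uig_P$ is the path graph on $[k+1]$ with every edge strict. The requirement that strict edges receive the natural orientation then forces $\ORIENTS(P)$ to be a singleton $\{\theta\}$ with $\asc(\theta)=0$; since one can traverse the strict chain from any vertex up to $k+1$, we have $\hrv{\theta}{u}=k+1$ for all $u$, so $\hrvpp(\theta)=(k+1)$ and $F_P=\elementaryE_{k+1}$. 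For~\refi{O:e-mult-diag:mult}, concatenation of Schr{\"o}der paths corresponds to a disjoint union of decorated graphs (since $PQ$ touches the main diagonal at the endpoint of $P$), so orientations, ascending edges and highest-reachable-vertex blocks all split according to the two components; the partition $\hrvpp$ of a product decomposes as the multiset union of the two factor partitions, and $\elementaryE_{\lambda\cup\mu}=\elementaryE_\lambda\elementaryE_\mu$ then delivers $F_{PQ}=F_PF_Q$.

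The substance of the proof lies in~\refi{O:e-mult-diag:diag}, and my plan there is a three-way bijection on orientations. Writing $(a,b)$ for the endpoint of $U$, the hypothesis $U\dstep V\in\SCH{}$ forces $a<b$; set $x=a+1$ and $y=b+1$, so that $x\leq b<y$. A direct check of the definition of $\uig_P$ (examining only cells in the unit square $[a,x]\times[b,y]$, since the three paths agree outside it) shows that the three decorated graphs $\uig_{U\estep\nstep V}$, $\uig_{U\nstep\estep V}$ and $\uig_{U\dstep V}$ coincide on all edges except $\edge{x}{y}$, which is absent in the first, present and non-strict in the second, and present and strict in the third. Consequently each $\theta'\in\ORIENTS(U\estep\nstep V)$ has a unique extension $\theta_D\in\ORIENTS(U\dstep V)$ (the new strict edge being oriented $\dedge{x}{y}$ by convention) and exactly two extensions $\theta_N^{\uparrow},\theta_N^{\downarrow}\in\ORIENTS(U\nstep\estep V)$, distinguished by whether the new non-strict edge is oriented $\dedge{x}{y}$ or $\dedge{y}{x}$.

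To finish I would track the statistics $\asc$ and $\hrvpp$ along these bijections. The key observation, on which the whole identity hinges, is that strict edges and ascending non-strict edges play \emph{the same role} in the definition of $\hrvop$: both are admissible in the directed paths used to compute it. Hence the strict edge $\dedge{x}{y}$ of $\theta_D$ and the ascending edge $\dedge{x}{y}$ of $\theta_N^{\uparrow}$ produce identical highest-reachable-vertex blocks, so $\hrvpp(\theta_D)=\hrvpp(\theta_N^{\uparrow})$. On the ascent side, $\asc(\theta_D)=\asc(\theta_N^{\downarrow})=\asc(\theta')$ (strict and descending edges both fail to be ascending), while $\asc(\theta_N^{\uparrow})=\asc(\theta')+1$; furthermore $\hrvpp(\theta_N^{\downarrow})=\hrvpp(\theta')$, since a descending edge is never followed when computing $\hrvop$. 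Summing contributions over $\theta'$ then yields
\begin{equation*}
F_{U\nstep\estep V}=F_{U\estep\nstep V}+qF_{U\dstep V},
\end{equation*}
which rearranges to~\refi{O:e-mult-diag:diag}. The one step most prone to error is the cell-by-cell comparison of the three decorated graphs; once it is verified that the only difference sits at $\edge{x}{y}$, the rest of the weight bookkeeping is essentially forced.
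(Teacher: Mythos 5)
Your proposal is correct and follows essentially the same route as the paper: parts \refi{O:e-mult-diag:e} and \refi{O:e-mult-diag:mult} are handled identically, and your three-way correspondence for \refi{O:e-mult-diag:diag} is exactly the paper's bijection $\phi:\ORIENTS(U\nstep\estep V)\to\ORIENTS(U\dstep V)\sqcup\ORIENTS(U\estep\nstep V)$ read in the opposite direction, with the same bookkeeping resting on the observation that strict and ascending edges play the same role in computing $\hrvop$ while a descending edge is never used.
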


\begin{proof}
To see \refi{O:e-mult-diag:e} let $P=\nstep\dstep^k\estep\in\SCH{k+1}$.
Note that the decorated unit-interval graph $\Gamma_{P}$ has $k$ strict edges and no other edges.
Thus $\ORIENTS(P)$ contains a unique orientation $\theta$ which has no ascents.
Its highest reachable vertex partition is given by $\hrvpp(\theta)=(k+1)$ since the vertex $k+1$ can be reached from every other vertex using the strict edges.
Hence $F_{P}=\elementaryE_{k+1}$ as claimed.

To see \refi{O:e-mult-diag:mult} let $P,Q\in\SCH{}$.
Note that $\uig_{PQ}$ is (up to a relabeling of the vertices) 
the disjoint union of the graphs $\uig_P$ and $\uig_Q$.
Thus every orientation of $\uig_{PQ}$ is (again up to a relabeling) 
the disjoint union of an orientation of $\uig_P$ and $\uig_Q$.
It follows that
\begin{eq*}
F_{PQ}
=
\sum_{\theta\in\ORIENTS(P)}
\sum_{\theta'\in\ORIENTS(Q)}
q^{\asc(\theta)+\asc(\theta')}\elementaryE_{\hrvpp(\theta)}\elementaryE_{\hrvpp(\theta')}
=
F_PF_Q
.
\end{eq*}
To see \refi{O:e-mult-diag:diag} let $U,V\in\{\nstep,\dstep,\estep\}^*$ such that $U\dstep V\in\SCH{}$.
Let $\edge{w}{y}$, where $w<y$, denote the strict edge in $\uig_{U\dstep V}$ 
corresponding to the diagonal step between $U$ and $V$.
Given $\theta\in\ORIENTS(U\nstep\estep V)$ let $\phi(\theta)=\theta\in\ORIENTS(U\dstep V)$ 
if $\dedge{w}{y}\in\theta$, 
and $\phi(\theta)=\theta\setminus\{\dedge{y}{w}\}\in\ORIENTS(U\estep\nstep V)$ if $\dedge{y}{w}\in\theta$.
This yields a bijection
\begin{alignat*}{3}
\phi:\ORIENTS(U\nstep\estep V)
&\quad \to\quad 
\ORIENTS(U\dstep V)&&\quad \sqcup\quad \ORIENTS(U\estep\nstep V) \\
 \ytableausetup{boxsize=1.0em}
\begin{ytableau}
\dn &  *(yellow) y\\
*(yellow)  w
\end{ytableau}
&\quad \mapsto\quad 
&&\quad \phantom{+}\quad 
\begin{ytableau}
*(lightgray) &  *(yellow) y \\
*(yellow) w
\end{ytableau} \\
\begin{ytableau}
\to &  *(yellow) y\\
*(yellow)  w
\end{ytableau}
&\quad \mapsto\quad 
q\;
\begin{ytableau}
*(lightgray) \rightarrow & *(yellow) y  \\
*(yellow) w
\end{ytableau}
\end{alignat*}
that preserves highest reachable vertices.
Moreover $\asc(\phi(\theta))=\asc(\theta)-1$ if $\dedge{w}{y}\in\theta$, 
and $\asc(\phi(\theta))=\asc(\theta)$ if $\dedge{y}{w}\in\theta$. The claim follows.
\end{proof}

We now turn our attention to the bounce relations.
To begin with we need some additional definitions.
Given a finite totally ordered set $X=\{u_1<u_2<\dots<u_r\}$ and a function $\sigma:X\to\setNN$, we identify $\sigma$ with the word $\sigma_1\sigma_2\dots\sigma_r$ where $\sigma_i=\sigma(u_i)$ for all $i\in[r]$.
Note that this word contains all the information on $\sigma$ if the domain $X$ is known.
The \defin{weak standardisation} of $\sigma$ is defined as the unique surjection $\wst(\sigma):X\to[m]$ that satisfies
\begin{eq*}
\wst(\sigma)(u)\leq\wst(\sigma)(v)
\quad\Leftrightarrow\quad
\sigma(u)\leq \sigma(v)
\end{eq*}
for all $u,v\in X$, where $m$ is (necessarily) the cardinality of the image of $\sigma$.

Let $\mathdefin{\ORIENTS(n)}$ denote the set of orientations of unit-interval graphs on $n$ vertices.
Given a Schr{\"o}der path $P\in\SCH{n}$ and a set of directed edges $D\subseteq [n]^2$, define
\begin{eq*}
\mathdefin{\ORIENTS(P,D)}
\subset \ORIENTS(P)
\end{eq*}
as the set of all orientations $\theta\in\ORIENTS(P)$ such that $D\subseteq\theta$.
Moreover, given $I\subseteq[n]$, $m\in[n]$, and a surjection $\sigma:I\to[m]$, define
\begin{eq*}
\mathdefin{\ORIENTS(P,\sigma,D)}
\subset \ORIENTS(P,D)
\end{eq*}
as the set of all orientations $\theta\in\ORIENTS(P,D)$ such 
that $\sigma$ is the weak standardization of $\hrv{\theta}{.\,}$ on $I$.
That is, for all $i,j\in I$ we have
\begin{eq*}
\sigma_i \leq \sigma_j \iff \hrv{\theta}{i} \leq \hrv{\theta}{j}.
\end{eq*}
If $D=\{\dedge{v}{w},\dedge{x}{y},\dots\}$ has small cardinality we write
\begin{eq*}
\mathdefin{\ORIENTS(P;\dedge{v}{w},\dedge{x}{y},\dots)}
\quad\text{and}\quad
\mathdefin{\ORIENTS(P,\sigma;\dedge{v}{w},\dedge{x}{y},\dots)}
\end{eq*}
instead of $\ORIENTS(P,D)$ and $\ORIENTS(P,\sigma,D)$ in order to make notation less heavy.

\begin{exa}{orient_notation}
Let $P=\texttt{nndneee}$. Then the set $\ORIENTS(P,2212;\dedge{1}{2})$
consists of the two orientations
\[
 \begin{ytableau}
*(lightgray) \; & \to & \dn &  *(yellow) 4\\
*(lightgray) \to & \dn &  *(yellow) 3\\
\to &  *(yellow) 2\\
*(yellow)  1
\end{ytableau}
\quad
\text{ and }
\qquad
 \begin{ytableau}
*(lightgray) \; & \to & \dn &  *(yellow) 4\\
*(lightgray) \to & \to &  *(yellow) 3\\
\to &  *(yellow) 2\\
*(yellow)  1
\end{ytableau}.
\]
\end{exa}

Throughout the remainder of this section let \defin{$P$} be a Schr{\"o}der path 
and let $(\mathdefin{x},\mathdefin{z})\in\setZ^2$ be a point on $P$ with $z>x+1$ such 
that the bounce partition of $P$ at $(x,z)$ is given by $(z,x,\mathdefin{v})$, 
and the bounce decomposition of $P$ at $(x,z)$ is given by $P=Us\bs t V\dstep\bs \estep W$ 
for some $\mathdefin{st}\in\{\nstep\nstep,\dstep\nstep\}$ and $\mathdefin{U},\mathdefin{V},\mathdefin{W}\in\{\nstep,\dstep,\estep\}$.
In particular, the bounce path of $P$ at $(x,z)$ has only one bounce point $(x,x)$.
Set $\mathdefin{w}\coloneqq v+1$ and $\mathdefin{y}\coloneqq x+1$, and 
\begin{eq*}
\mathdefin{Q} \coloneqq
\begin{cases}
U\nstep\nstep V\estep\dstep W
&\quad\text{if }st=\nstep\nstep,
\\
U\nstep\dstep V\estep\dstep W
&\quad\text{if }st=\dstep\nstep
.
\end{cases}
\end{eq*}
Let $\mathdefin{\tau} \coloneqq (x,y)\in\symS_n$ be a transposition.
Given $A\subseteq\ORIENTS(n)$ and a function $f:A\to\ORIENTS(n)$, we say that $f$ is \defin{$\hrvp$-preserving on $A$} if
$\hrvp(f(\theta))=\hrvp(\theta)$
for all $\theta\in A$.
The function $f$ is called \defin{$\hrvp$-switching
on $A$} if $\hrvp(f(\theta))=\tau(\hrvp(\theta))$.
Both of these properties imply that $f$ is \defin{$\hrvpp$-preserving on $A$}, 
that is $\hrvpp(f(\theta))=\hrvpp(\theta)$ for all $\theta\in A$.
To see this note that permuting the entries of a set partition does not change the block structure.

Define a function
\begin{eq*}
\mathdefin{\rfun{v}}:\ORIENTS(n)\to[n]
\end{eq*}
by letting $\rfun{v}(\theta)$ be the maximal vertex in $[n]$ that can be reached from $v$ using only ascending and strict edges in $\theta$ without using any of the edges $\dedge{v}{x}$ or $\dedge{v}{y}$.
Similarly define
\begin{eq*}
\mathdefin{\rfun{x}}:\ORIENTS(n)\to[n]
\end{eq*}
by letting $\rfun{x}(\theta)$ be the maximal vertex in $[n]$ that can be reached from $x$ using only ascending and strict edges in $\theta$ without using any of the edges $\dedge{x}{y}$ or $\dedge{x}{z}$.
Finally define
\begin{eq*}
\mathdefin{\rfun{y}}:\ORIENTS(n)\to[n]
\end{eq*}
by letting $\rfun{y}(\theta)$ be the maximal vertex in $[n]$ that can be reached from $y$ using only ascending and strict edges in $\theta$ without using the edge $\dedge{y}{z}$.
Note that $\rfun{w}(\theta)\leq\hrv{\theta}{w}$, $\rfun{x}(\theta)\leq\hrv{\theta}{x}$ and $\rfun{y}(\theta)\leq\hrv{\theta}{y}$.

\subsection{The first bounce relation for orientations}

We now take care of the case $st=\nstep\nstep$.
Our goal is to prove the following theorem.

\begin{thm}{orientations:bounceI}
Let $P\in\SCH{}$ be a Schr{\"o}der path and let $(x,z)\in\setZ^2$ be a point on $P$ with $z>x+1$ such that the bounce path of $P$ at $(x,z)$ has a single bounce point, and such that the bounce decomposition of $P$ at $(x,z)$ is given by $P=U\nstep\bs\nstep V\dstep\bs\estep W$.
Set $Q\coloneqq U\nstep\bs\nstep V\estep\bs\dstep W$.
Then
\begin{eq*}
\LLTe_{P}(\xvec;q+1)
=
(q+1)\LLTe_{Q}(\xvec;q+1)
.
\end{eq*}
\end{thm}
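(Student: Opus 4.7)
The plan is to exhibit, for each $\theta' \in \ORIENTS(Q)$, a pair $(\theta_1, \theta_2) \in \ORIENTS(P)^2$ satisfying $\hrvpp(\theta_i) = \hrvpp(\theta')$ for $i = 1, 2$ and $q^{\asc(\theta_1)} + q^{\asc(\theta_2)} = (q+1)\,q^{\asc(\theta')}$; summing over $\theta'$ then produces the claimed identity $\LLTe_P(\xvec;q+1) = (q+1) \LLTe_Q(\xvec;q+1)$.

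The starting observation is that moving the diagonal step from ending at $(x, z)$ in $P$ to ending at $(y, z)$ in $Q$ removes exactly one cell from the region below the path, namely the cell in column $x$ and row $z$. Hence $E(\Gamma_P) = E(\Gamma_Q) \sqcup \{\edge{x}{z}\}$, with $\edge{x}{z}$ strict in $\Gamma_P$ and absent from $\Gamma_Q$, and with $\edge{y}{z}$ strict in $\Gamma_Q$ but only a non-strict edge of $\Gamma_P$; all other edges coincide. By the unit-interval property of $\Gamma_Q$ no neighbor of $x$ has index $\geq z$, so the strict edge $\dedge{x}{z}$ present in every orientation of $\ORIENTS(P)$ alters the reachability from $x$ qualitatively.

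The correspondence is defined by stratifying $\ORIENTS(Q)$ according to the orientation of $\edge{x}{y}$ and, more finely, by the weak standardisation $\sigma$ of $\hrv{\theta'}{\cdot}$ on the bounce neighborhood $\{v, w, x, y, z\}$ (recall the notation $\ORIENTS(Q, \sigma, D)$). On each stratum, I adjoin the forced edge $\dedge{x}{z}$, prescribe the orientation of $\edge{y}{z}$ according to the stratum (generically $\dedge{y}{z}$ if $\dedge{x}{y} \in \theta'$ and $\dedge{z}{y}$ if $\dedge{y}{x} \in \theta'$), and let the orientation of $\edge{x}{y}$ be a free bit, producing the two preimages $\theta_1, \theta_2$. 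A short case computation over the four local configurations of $(\edge{x}{y}, \edge{y}{z})$ shows $\{\asc(\theta_1), \asc(\theta_2)\} = \{\asc(\theta'), \asc(\theta') + 1\}$, which produces the desired $q+1$ factor.

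The main obstacle is verifying $\hrvpp$-preservation. The set partition $\hrvp(\theta_i)$ is generally \emph{not} equal to $\hrvp(\theta')$: when $\dedge{y}{x} \in \theta'$, the vertex $x$ has no outgoing ascending or strict edge in $\Gamma_Q$ with head $\geq z$ and lies in a small block, while $y$ lies in the block of $z$ through the strict edge $\dedge{y}{z}$. In each $\theta_i$ the roles of $x$ and $y$ are interchanged: the forced strict edge $\dedge{x}{z}$ drags $x$ into the block of $z$, while $\edge{y}{z}$ oriented as $\dedge{z}{y}$ leaves $y$ without a strong outgoing edge. Thus the correspondence is $\hrvp$-switching with respect to the transposition $\tau = (x, y)$, and is therefore automatically $\hrvpp$-preserving since $\tau$ permutes blocks without changing their sizes. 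Checking this block-switching claim for every external vertex $u \notin \{x, y\}$ is the tedious step, and it uses the reachability functions $\rfun{v}, \rfun{x}, \rfun{y}$ together with the stratification by $\sigma$ to reduce the verification to a finite case analysis on the local edge orientations near the bounce point.
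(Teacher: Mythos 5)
Your overall framework (a two-to-one correspondence $\ORIENTS(P)\to\ORIENTS(Q)$ whose fibres carry weight $(q+1)q^{\asc(\theta')}$, with $\hrvp$-switching used to get $\hrvpp$-preservation) is the same as the paper's, and your description of how $\Gamma_P$ and $\Gamma_Q$ differ is correct. The gap is in the construction of the preimages: you keep every edge outside $\{\edge{x}{y},\edge{y}{z},\edge{x}{z}\}$ oriented exactly as in $\theta'$ and claim the result is $\hrvp$-switching for $\tau=(x,y)$. That claim is false, and no choice of the two local bits can repair it. When $\dedge{y}{x}\in\theta'$, switching would force $\hrv{\theta_i}{y}=\hrv{\theta'}{x}$, i.e.\ $\rfun{y}(\theta')=\rfun{x}(\theta')$; but the edges $\dedge{x}{j}$ and $\dedge{y}{j}$ for $y<j<z$ are oriented independently, so already the multiset $\{\hrv{\cdot}{x},\hrv{\cdot}{y}\}$ changes. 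Concretely, take $P=\nstep\nstep\nstep\dstep\estep\estep\estep$ and $(x,z)=(1,4)$, so $y=2$, $Q=\nstep\nstep\nstep\estep\dstep\estep\estep$, and let $\theta'=\{\dedge{2}{1},\dedge{1}{3},\dedge{3}{2},\dedge{4}{3},\dedge{2}{4}\}\in\ORIENTS(Q)$. Then $\hrv{\theta'}{1}=3$, $\hrv{\theta'}{2}=4$, so $\hrvpp(\theta')=(2,2)$. Any orientation of $\Gamma_P$ that agrees with $\theta'$ on the edges $\edge{1}{3},\edge{2}{3},\edge{3}{4}$ and contains the strict edge $\dedge{1}{4}$ has, for the four possible orientations of $\edge{1}{2}$ and $\edge{2}{4}$, either $\hrvpp=(2,1,1)$ (if $\dedge{4}{2}$) or $\hrvpp=(3,1)$ (if $\dedge{2}{4}$); the value $(2,2)$ is unattainable. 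So your fibres cannot consist of orientations obtained from $\theta'$ by adjusting only the three special edges, and the "tedious but routine" verification for external vertices cannot succeed as stated.

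This is exactly why the paper's proof does not fix the remaining edges: on the strata where $x$ and $y$ must trade places it uses the map $\psi$ of \refl{II:psi}, which also swaps the orientations of all edges $\edge{u}{x}\leftrightarrow\edge{u}{y}$ ($u<x$) and $\edge{x}{j}\leftrightarrow\edge{y}{j}$ ($y<j<z$), so that $\rfun{x}$ and $\rfun{y}$ are exchanged and $\hrvp$-switching genuinely holds; on other strata it instead keeps the orientation of $\edge{x}{y}$ and frees $\edge{y}{z}$ (the maps $\phi$ and $\phibar$ of Lemmas~\ref{Lemma:II:phi} and~\ref{Lemma:II:phi'}), and the assembly in Table~\ref{Table:orientations:bounceI} pairs strata with \emph{different} standardizations (e.g.\ $211$ on the $P$ side with $121$ on the $Q$ side), so "lift within the stratum" is not the right bookkeeping either. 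In my example the paper's $\psi$ sends $\theta'$ to orientations containing $\dedge{3}{1},\dedge{2}{3}$ in place of $\dedge{1}{3},\dedge{3}{2}$, which do have $\hrvpp=(2,2)$. To fix your argument you would need to build in this row/column exchange (or an equivalent modification of the non-local edges) and redo the weight and reachability analysis case by case, which is essentially the paper's proof.
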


A proof of \reft{orientations:bounceI} is given at the end of this subsection.
Our strategy is to find a $\hrvpp$-preserving two-to-one 
function $\Phi:\ORIENTS(P)\to\ORIENTS(Q)$ such that the fibres of $\Phi$ satisfy
\begin{eq*}
\sum_{\theta\in\Phi^{-1}(\theta')}q^{\asc(\theta)}
=
(q+1)q^{\asc(\theta')}
\end{eq*}
for all $\theta'\in\ORIENTS(Q)$.
We build such a function
$\Phi$ from three building blocks $\phi$, $\phibar$, and $\psi$
defined below.

Let $\phi:\ORIENTS(P)\to\ORIENTS(Q)$ be the function defined by
\begin{eq*}
\mathdefin{\phi(\theta)}
\coloneqq
\theta\setminus\{\dedge{x}{z},\dedge{z}{y}\}\cup\{\dedge{y}{z}\}.
\end{eq*}

\begin{exa}{II:phiExample}
The map $\phi$ is a bijection for all choices $\alpha,\beta \in \{\to,\dn\}$.
\[
 \phi:
 \includegraphics[page=1,scale=0.8,valign=c]{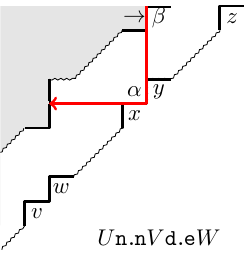}%
\quad\mapsto\quad%
\includegraphics[page=2,scale=0.8,valign=c]{figuresOrientations.pdf}%
\]
\end{exa}

The following result is immediate from this definition.
\begin{lem}{II:phi}
Let $a\in\{\dedge{x}{y},\dedge{y}{x}\}$, $b\in\{\dedge{y}{z},\dedge{z}{y}\}$.
Then
\begin{eq*}
\phi:
\ORIENTS(P;a,b)
\to
\ORIENTS(Q;a)
\end{eq*}
is a bijection.
Moreover $\asc(\phi(\theta))=\asc(\theta)-1$ for all $\theta\in\ORIENTS(P;\dedge{y}{z})$, and $\asc(\phi(\theta))=\asc(\theta)$ for all $\theta\in\ORIENTS(P;\dedge{z}{y})$.
\hfill\qedsymbol
\end{lem}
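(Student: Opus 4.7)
The plan is to reduce everything to a careful comparison of the decorated graphs $\Gamma_P$ and $\Gamma_Q$. First I would observe that $P$ and $Q$ agree outside the pattern $\dstep\estep$ versus $\estep\dstep$ connecting $(x-1,z-1)$ to $(y,z)$. Consequently their unit-interval graphs differ only at the edges $\edge{x}{z}$ and $\edge{y}{z}$: the diagonal step of $P$ ending at $(x,z)$ produces a strict edge $\edge{x}{z}$, which is \emph{not an edge at all} in $\Gamma_Q$ (at column $x$ the path $Q$ sits at height $z-1$), while the edge $\edge{y}{z}$ is non-strict in $\Gamma_P$ but strict in $\Gamma_Q$ (arising from the diagonal step of $Q$ ending at $(y,z)$). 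All other edges coincide, and in particular the edge $\edge{x}{y}$ is unchanged.

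From this, well-definedness of $\phi:\ORIENTS(P;a,b)\to\ORIENTS(Q;a)$ follows directly. For $\theta\in\ORIENTS(P;a,b)$, the edge $\dedge{x}{z}$ belongs to $\theta$ by naturality of strict edges, and removing it from $\theta$ drops an edge not present in $\Gamma_Q$. Similarly, replacing the orientation of $\edge{y}{z}$ by $\dedge{y}{z}$ enforces the natural orientation required by $\Gamma_Q$ on its new strict edge. Since $\phi$ does not touch $\edge{x}{y}$, the restriction $a$ is preserved. I would then exhibit the explicit two-sided inverse: given $\theta'\in\ORIENTS(Q;a)$, set
\begin{eq*}
\phi^{-1}(\theta')=
\begin{cases}
\theta'\cup\{\dedge{x}{z}\} &\text{if }b=\dedge{y}{z},\\
\bigl(\theta'\setminus\{\dedge{y}{z}\}\bigr)\cup\{\dedge{x}{z},\dedge{z}{y}\} &\text{if }b=\dedge{z}{y}.
\end{cases}
\end{eq*}
One checks by direct substitution that $\phi\circ\phi^{-1}$ and $\phi^{-1}\circ\phi$ are identities on the respective sets.

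For the ascent count, I would note that $\phi$ alters only the orientations of $\edge{x}{z}$ and $\edge{y}{z}$, so it suffices to compare the ascent contributions of these two edges. In $\theta$, the edge $\edge{x}{z}$ is strict and contributes nothing, while $\edge{y}{z}$ contributes $1$ when $b=\dedge{y}{z}$ and $0$ when $b=\dedge{z}{y}$. In $\phi(\theta)$, the edge $\edge{x}{z}$ is absent and $\edge{y}{z}$ is strict, so both contribute $0$. Hence $\asc(\phi(\theta))=\asc(\theta)-1$ in the first case and $\asc(\phi(\theta))=\asc(\theta)$ in the second.

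The only real obstacle is the geometric comparison of the cells lying below $P$ and $Q$ in the modified region; once that is in hand the rest is essentially bookkeeping, which is why the authors call the statement ``immediate''.
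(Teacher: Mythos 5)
Your proposal is correct and is exactly the argument the paper leaves implicit when it declares the lemma ``immediate from this definition'': identifying that $\Gamma_P$ and $\Gamma_Q$ differ only in the edge $\edge{x}{z}$ (strict in $\Gamma_P$, absent in $\Gamma_Q$) and the edge $\edge{y}{z}$ (non-strict in $\Gamma_P$, strict in $\Gamma_Q$), then checking the explicit inverse and the ascent contributions of just those two edges. Nothing is missing; your write-up simply spells out the bookkeeping the authors omit.
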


If we impose restrictions via $\sigma$ we can say more.
\begin{lem}{II:phi_sigma}
Let $\sigma:\{x,y,z\}\to[m]$ be a surjection such 
that $\sigma_3<\sigma_1$ and $\sigma_3<\sigma_2$, 
let $a\in\{\dedge{x}{y},\dedge{y}{x}\}$, and 
let $b\in\{\dedge{y}{z},\dedge{z}{y}\}$.
Then
\begin{eq*}
\phi:
\ORIENTS(P,\sigma;a,b)
\to
\ORIENTS(Q,\sigma;a)
\end{eq*}
is a $\hrvp$-preserving bijection.
\end{lem}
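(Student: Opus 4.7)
The plan is to reduce the claim to a stronger pointwise statement about $\hrv$ and then proceed by reverse induction on vertices. By \refl{II:phi} the map $\phi$ is already a bijection $\ORIENTS(P;a,b)\to\ORIENTS(Q;a)$; since in any orientation the value $\hrv{\theta}{u}$ is the maximum of the $\hrvp(\theta)$-block containing $u$, preservation of the set partition $\hrvp$ automatically preserves the weak standardisation $\sigma$ of $\hrv$ on $\{x,y,z\}$. Hence it suffices to show that $\hrv{\phi(\theta)}{u}=\hrv{\theta}{u}$ for every vertex $u$ and every $\theta\in\ORIENTS(P,\sigma;a,b)$, and the asserted bijection onto $\ORIENTS(Q,\sigma;a)$ will follow.

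The key observation is that $\uig_P$ and $\uig_Q$ agree outside the pair $\{\edge{x}{z},\edge{y}{z}\}$, with $\edge{x}{z}$ strict in $\uig_P$ but absent in $\uig_Q$, and $\edge{y}{z}$ non-strict in $\uig_P$ but strict in $\uig_Q$. Consequently, for any vertex $u\notin\{x,y\}$ the set of outgoing ascending or strict edges at $u$ is identical in $\theta$ and in $\phi(\theta)$. Reverse induction on $u$, using the inductive hypothesis at all $w>u$, then immediately yields $\hrv{\phi(\theta)}{u}=\hrv{\theta}{u}$ in that range.

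The real content of the lemma lies in the two remaining cases $u=y$ and $u=x$, where the hypotheses $\sigma_3<\sigma_2$ and $\sigma_3<\sigma_1$ respectively enter. For $u=y$, if $b=\dedge{y}{z}$ the edge contributes identically in both orientations (ascending versus strict, same reach), while if $b=\dedge{z}{y}$ the assumption $\hrv{\theta}{z}<\hrv{\theta}{y}$ ensures that the newly usable strict edge $\dedge{y}{z}$ in $\phi(\theta)$ cannot push $\hrv{\phi(\theta)}{y}$ above $\hrv{\theta}{y}$. The principal obstacle is the case $u=x$ together with $a=\dedge{y}{x}$: here $x$'s only access into $\{y,z\}$ in $\theta$ is via the strict edge $\dedge{x}{z}$, which disappears in $\phi(\theta)$ and is not replaced by any new outgoing edge at $x$. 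The constraint $\hrv{\theta}{z}<\hrv{\theta}{x}$ is exactly what is needed to conclude that $\hrv{\theta}{x}$ is attained along a path from $x$ avoiding $z$; any such path uses only edges unaffected by $\phi$ and therefore remains available in $\phi(\theta)$, giving the required equality. The complementary sub-case $a=\dedge{x}{y}$ is easier, since in $\phi(\theta)$ the route $x\to y\to z$ via the ascending edge $\dedge{x}{y}$ and the strict edge $\dedge{y}{z}$ restores access from $x$ to $z$, and the equality then follows from the already-established identity $\hrv{\phi(\theta)}{y}=\hrv{\theta}{y}$ together with $\hrv{\theta}{z}<\hrv{\theta}{y}$.
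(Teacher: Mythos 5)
Your forward argument is essentially the paper's: the hypotheses $\hrv{\theta}{x}>\hrv{\theta}{z}$ and $\hrv{\theta}{y}>\hrv{\theta}{z}$ guarantee that neither the lost strict edge $\dedge{x}{z}$ nor the gained strict edge $\dedge{y}{z}$ changes $\hrv{\phi(\theta)}{x}$ or $\hrv{\phi(\theta)}{y}$, and the remaining vertices follow since all usable edges point upward (only note that the vertices $u<x$ must be handled \emph{after} $x$ and $y$ in your reverse induction, not before). The reduction via the observation that $\hrv{\theta}{u}$ is the maximum of its $\hrvp(\theta)$-block is also fine.

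The genuine gap is the last clause: from $\hrv{\phi(\theta)}{u}=\hrv{\theta}{u}$ for all $\theta\in\ORIENTS(P,\sigma;a,b)$ you only get that $\phi$ maps $\ORIENTS(P,\sigma;a,b)$ injectively into $\ORIENTS(Q,\sigma;a)$ while preserving $\hrvp$; surjectivity does \emph{not} follow. Since $\phi$ fails to preserve $\hrv$ outside the hypotheses (for instance, if $\hrv{\theta}{z}\geq\hrv{\theta}{y}$ and $b=\dedge{z}{y}$, the new strict edge raises $\hrv{\phi(\theta)}{y}$ to $\hrv{\theta}{z}$), you cannot argue that the bijection of the unrestricted sets from \refl{II:phi} restricts block-by-block over all weak standardisations: a priori some $\theta'\in\ORIENTS(Q,\sigma;a)$ could have its $\phi$-preimage lying in $\ORIENTS(P,\sigma';a,b)$ for a different $\sigma'$. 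The second half of the paper's proof is devoted precisely to ruling this out: given $\theta'\in\ORIENTS(Q,\sigma;a)$, take the preimage $\theta\in\ORIENTS(P;a,b)$ provided by \refl{II:phi} and use the conditions on $\theta'$, namely $\hrv{\theta'}{y}>\hrv{\theta'}{z}$ and $\hrv{\theta'}{x}>\hrv{\theta'}{z}$, to deduce $\hrv{\theta}{y}=\rfun{y}(\theta)=\rfun{y}(\theta')=\hrv{\theta'}{y}$ and similarly $\hrv{\theta}{x}=\hrv{\theta'}{x}$, so that $\theta\in\ORIENTS(P,\sigma;a,b)$. You need to add this converse verification (it mirrors your forward computation but runs off the hypotheses on $\theta'$ rather than on $\theta$); as written, ``the asserted bijection will follow'' is unjustified.
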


\begin{proof}
Set $A=\ORIENTS(P,\sigma;a,b)$ and $B=\ORIENTS(Q,\sigma;a)$, and let $\theta\in A$.
Note that $\hrv{\theta}{y}>\hrv{\theta}{z}$ implies
\begin{eq*}
\hrv{\theta}{y}
=
\rfun{y}(\theta)
=
\rfun{y}(\theta')
=
\hrv{\theta'}{y}.
\end{eq*}
Similarly $\hrv{\theta}{x}>\hrv{\theta}{z}$ implies that the edge $\dedge{x}{z}$ is not needed to reach $\hrv{\theta}{x}$ from $x$ using only strict and ascending edges in $\theta$.
Hence $\hrv{\theta'}{x}=\hrv{\theta}{x}$.
Consequently $\phi:A\to B$ is well-defined and $\hrvp$-preserving on $A$.

To see that $\phi$ is also surjective let $\theta'\in B$.
By \refl{II:phi} there exists $\theta\in\ORIENTS(P;a,b)$ with $\phi(\theta)=\theta'$.
Since $\hrv{\theta'}{y}>\hrv{\theta'}{z}$ we have
\begin{eq*}
\hrv{\theta'}{y}
=\rfun{y}(\theta')
=\rfun{y}(\theta)
=\hrv{\theta}{y}.
\end{eq*}
Similarly $\hrv{\theta'}{x}>\hrv{\theta'}{z}$ implies that $\hrv{\theta}{x}=\hrv{\theta'}{x}$.
Thus $\theta\in A$ and the proof is complete.
\end{proof}

We now collect the orientations on which we plan to use the map $\phi$ in the proof of \reft{orientations:bounceI}.

\begin{lem}{II:id}
The function $\phi$ is a $\hrvpp$-preserving bijection between the sets
\begin{eq}{II:id}
\ORIENTS(P,221;a,b)
&\to
\ORIENTS(Q,221;a)
\\
\ORIENTS(P,231;\dedge{y}{x},b)
&\to
\ORIENTS(Q,231;\dedge{y}{x})
\\
\ORIENTS(P,321;a,b)
&\to
\ORIENTS(Q,321;a)
\end{eq}
for all $a\in\{\dedge{x}{y},\dedge{y}{x}\}$ and $b\in\{\dedge{y}{z},\dedge{z}{y}\}$, 
where all surjections have domain $\{x,y,z\}$.

Let $A$ be the union of all domains in \refq{II:id} taken over all possible choices of $a$ and $b$.
Similarly let $B$ be the union of all codomains in \refq{II:id} taken over all possible choices of $a$.
Then $\phi:A\to B$ is a two-to-one function, and the fibres of $\phi$ satisfy
\begin{eq*}
\sum_{\theta\in\phi^{-1}(\theta')}q^{\asc(\theta)}
=
(q+1)q^{\asc(\theta')}
.
\end{eq*}
for all $\theta'\in B$.
\end{lem}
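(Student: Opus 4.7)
My plan is to deduce the three bijections in \refq{II:id} as instances of \refl{II:phi_sigma}, then patch them together into the required two-to-one map, and finally track the ascent statistic on the resulting pairs of preimages.

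First I would note that each of the surjections $221,231,321$ satisfies $\sigma_3<\sigma_1$ and $\sigma_3<\sigma_2$, so \refl{II:phi_sigma} directly provides a $\hrvp$-preserving bijection $\phi\colon\ORIENTS(P,\sigma;a,b)\to\ORIENTS(Q,\sigma;a)$ for every admissible pair $(a,b)$. Since reordering the entries of a set partition leaves the integer partition of block sizes unchanged, $\hrvp$-preservation automatically implies $\hrvpp$-preservation. The one detail that requires justification is the restriction to $a=\dedge{y}{x}$ in the middle line of \refq{II:id}: if one had $\dedge{x}{y}\in\theta$ together with $\sigma=231$, then $x$ would reach everything that $y$ reaches, forcing $\hrv{\theta}{x}\geq\hrv{\theta}{y}$ and contradicting $\sigma_1<\sigma_2$; by contrast, for $\sigma\in\{221,321\}$ both orientations of $\edge{x}{y}$ remain consistent with $\sigma$ (one needs only $\hrv{\theta}{x}\geq\hrv{\theta}{y}$), which is why $a$ ranges freely there.

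Next I would assemble these individual bijections into the global two-to-one map. The three domains listed in \refq{II:id} (taken over all admissible $a,b$) are pairwise disjoint, because an orientation has a unique weak standardisation on $\{x,y,z\}$ and unique orientations of $\edge{x}{y}$ and $\edge{y}{z}$; the same applies to the three codomains. Given $\theta'\in B$, the pair $(\sigma,a)$ recording its weak standardisation on $\{x,y,z\}$ and its orientation of $\edge{x}{y}$ is uniquely determined, and by the individual bijections there is exactly one $\theta\in\ORIENTS(P,\sigma;a,b)$ with $\phi(\theta)=\theta'$ for each of the two choices $b\in\{\dedge{y}{z},\dedge{z}{y}\}$. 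Hence $|\phi^{-1}(\theta')|=2$, so $\phi\colon A\to B$ is two-to-one.

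Finally I would compare ascent counts on the two preimages. The only edges affected by $\phi$ are $\edge{x}{z}$ (strict in $P$, absent in $Q$) and $\edge{y}{z}$ (non-strict in $P$, strict in $Q$); all other edges keep their orientation and strictness and contribute identically to $\asc$ in $\theta$ and $\phi(\theta)$. The strict edge $\dedge{x}{z}$ of $P$ is non-ascending and simply vanishes, so it does not affect the count. If $b=\dedge{y}{z}$ then this edge is ascending in $\theta$ but strict (hence non-ascending) in $\phi(\theta)$, so $\asc(\theta)=\asc(\phi(\theta))+1$; if $b=\dedge{z}{y}$ then it is descending in $\theta$ and strict in $\phi(\theta)$, so $\asc(\theta)=\asc(\phi(\theta))$. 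Summing over the two preimages yields
\begin{equation*}
\sum_{\theta\in\phi^{-1}(\theta')}q^{\asc(\theta)}
=q^{\asc(\theta')+1}+q^{\asc(\theta')}
=(q+1)\,q^{\asc(\theta')},
\end{equation*}
as required. The main obstacle is essentially bookkeeping: one must verify that $a=\dedge{x}{y}$ is genuinely forbidden only for $\sigma=231$, and that swapping the strict and non-strict status of $\edge{x}{z}$ and $\edge{y}{z}$ produces no hidden changes to ascents on any other edge; beyond these checks the argument is mechanical.
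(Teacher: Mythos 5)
Your proof is correct and follows essentially the same route as the paper: both invoke \refl{II:phi_sigma} (noting $\sigma_1>\sigma_3$ and $\sigma_2>\sigma_3$ in all listed cases), pair each codomain with the two domains indexed by the choice of $b$, and obtain the fibre weights from the fact that $\phi$ drops an ascent exactly when $\dedge{y}{z}\in\theta$. The extra details you supply (disjointness of the domains, why $a=\dedge{x}{y}$ is impossible for $\sigma=231$, and the explicit ascent bookkeeping in place of citing \refl{II:phi}) are correct elaborations of steps the paper leaves implicit.
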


\begin{proof}
All cases satisfy the conditions $\sigma_1>\sigma_3$ and $\sigma_2>\sigma_3$.
Hence \refl{II:phi_sigma} applies.
Each codomain is in bijection with two distinct domains.
Depending on the choice of $b$ the $q$-weight of $\theta$ and $\phi(\theta)$ is either equal or differs by one.
\end{proof}


Next let
$
\phibar:\ORIENTS(P)\to\ORIENTS(Q)
$
be the function defined by
\begin{eq*}
\mathdefin{\phibar(\theta)}
\coloneqq 
\phi(\theta)\setminus\{\dedge{y}{x}\}\cup\{\dedge{x}{y}\}
.
\end{eq*}

\begin{exa}{II:phiBarExample}
The map $\phi'$ is a bijection for all choices $\alpha,\beta \in \{\to,\dn\}$.
\[
\phibar:
 \includegraphics[page=3,scale=0.8,valign=c]{figuresOrientations.pdf}%
\quad\mapsto\quad%
\includegraphics[page=4,scale=0.8,valign=c]{figuresOrientations.pdf}%
\]
\end{exa}

Our analysis of $\phibar$ follows the same pattern as for the map $\phi$.

\begin{lem}{II:phi'}
Let $a\in\{\dedge{x}{y},\dedge{y}{x}\}$ and $b\in\{\dedge{y}{z},\dedge{z}{y}\}$.
Then
\begin{eq*}
\phibar:
\ORIENTS(P;a,b)
\to
\ORIENTS(Q;\dedge{x}{y})
\end{eq*}
is a bijection.
Moreover $\asc(\phi(\theta))=\asc(\theta)-1$ for all $\theta\in\ORIENTS(P;\dedge{x}{y},\dedge{y}{z})$, and $\asc(\phi(\theta))=\asc(\theta)$ for all $\theta\in\ORIENTS(P;\dedge{x}{y},\dedge{z}{y})$ and all $\theta\in\ORIENTS(P;\dedge{y}{x},\dedge{y}{z})$.
\hfill\qedsymbol
\end{lem}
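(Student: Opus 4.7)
The plan is to reduce \refl{II:phi'} to the preceding \refl{II:phi} by decomposing $\phibar$ as $\phi$ followed by a canonical flip of the edge between $x$ and $y$. Concretely, by definition $\phibar(\theta) = \phi(\theta) \setminus \{\dedge{y}{x}\} \cup \{\dedge{x}{y}\}$, so $\phibar$ agrees with $\phi$ whenever $\dedge{x}{y} \in \theta$, and differs from $\phi$ only by flipping the $xy$ edge when $\dedge{y}{x} \in \theta$.

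First I would treat the case $a = \dedge{x}{y}$. Here $\phi(\theta)$ still contains $\dedge{x}{y}$ (since $\phi$ leaves the $xy$ edge untouched) and does not contain $\dedge{y}{x}$, so $\phibar(\theta) = \phi(\theta)$. The bijection $\ORIENTS(P;\dedge{x}{y},b) \to \ORIENTS(Q;\dedge{x}{y})$ and the ascent relations follow immediately from \refl{II:phi}.

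Next I would treat the case $a = \dedge{y}{x}$. Here $\phibar$ factors as the composition of $\phi : \ORIENTS(P;\dedge{y}{x},b) \to \ORIENTS(Q;\dedge{y}{x})$ (bijective by \refl{II:phi}) with the edge-flip map $\ORIENTS(Q;\dedge{y}{x}) \to \ORIENTS(Q;\dedge{x}{y})$ sending $\dedge{y}{x}$ to $\dedge{x}{y}$ and fixing everything else. The flip is obviously a bijection, so $\phibar$ is too. For the ascent count, the small but essential observation is that $\edge{x}{y}$ is present in $\uig_Q$ and is \emph{not} strict: both $P$ and $Q$ start with the pattern $s\bs t = \nstep\bs\nstep$ at the columns $x,y$, so no diagonal step of $Q$ ends at $y$. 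Hence flipping $\dedge{y}{x}$ to $\dedge{x}{y}$ adds exactly one ascending edge. Combined with \refl{II:phi} (which gives $\asc(\phi(\theta)) = \asc(\theta)-1$ when $b = \dedge{y}{z}$), this yields $\asc(\phibar(\theta)) = \asc(\theta)$ in the subcase $(a,b) = (\dedge{y}{x},\dedge{y}{z})$, matching the statement.

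The proof is essentially bookkeeping once \refl{II:phi} is in hand, and I do not anticipate a serious obstacle. The only mild subtlety is the verification that $\edge{x}{y}$ is genuinely non-strict in $\uig_Q$; this is where the hypothesis $st = \nstep\nstep$ (or more generally, that the bounce path begins with a north step at $y$) enters, and it is what makes the $\pm 1$ in the ascent statistic line up correctly across the three subcases listed in the lemma.
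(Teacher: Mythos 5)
Your proposal is correct and is essentially the argument the paper has in mind: the paper states \refl{II:phi'} without proof as immediate from the definition of $\phibar$ (noting only that "the analysis of $\phibar$ follows the same pattern as for $\phi$"), and your factorization of $\phibar$ as $\phi$ followed by the flip of the edge $\edge{x}{y}$ is exactly that bookkeeping made explicit. Your key check — that $\edge{x}{y}$ is a non-strict edge of $\uig_Q$ because the step $t$ in the decomposition is a north step (so no diagonal step ends at the point $(x,y)$), whence the flip adds exactly one ascent and the three stated ascent identities follow from \refl{II:phi} — is the right justification.
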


If we impose restrictions on $\sigma$ we can say more.
\begin{lem}{II:phi'_sigma}
Let $\sigma:\{x,y,z\}\to[m]$ be a surjection 
with $\sigma_2\leq\sigma_1$ and $\sigma_2=\sigma_3$, and let $a\in\{\dedge{x}{y},\dedge{y}{x}\}$.
Then
\begin{eq*}
\phibar:
\ORIENTS(P,\sigma;a,\dedge{y}{z})
\to
\ORIENTS(Q,\sigma;\dedge{x}{y})
\end{eq*}
is a $\hrvp$-preserving bijection on $\ORIENTS(P,\sigma;a,\dedge{y}{z})$.
\end{lem}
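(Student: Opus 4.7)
The plan is to deduce the lemma from \refl{II:phi'}, which already establishes that $\phibar$ restricts to a bijection $\ORIENTS(P;a,\dedge{y}{z})\to\ORIENTS(Q;\dedge{x}{y})$. Since $\ORIENTS(P,\sigma;a,\dedge{y}{z})$ and $\ORIENTS(Q,\sigma;\dedge{x}{y})$ are subsets of these, injectivity of the restricted map is automatic; what remains is to show that $\phibar$ lands in the stated target, is $\hrvp$-preserving, and is surjective.

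The core claim is that for any $\theta\in\ORIENTS(P;a,\dedge{y}{z})$ satisfying $\hrv{\theta}{y}=\hrv{\theta}{z}$, with $\theta'\coloneqq\phibar(\theta)$, one has $\hrv{\theta}{u}=\hrv{\theta'}{u}$ for every $u\in[n]$. Indeed the only edges touched by $\phibar$ are $\dedge{x}{z}$, the orientation of $\edge{y}{z}$, and the orientation of $\edge{x}{y}$, all three incident to $\{x,y,z\}$. Using the recursion
\[
\hrv{\theta}{u}=\max\bigl(\{u\}\cup\{\hrv{\theta}{v}:\dedge{u}{v}\text{ ascending or strict in }\theta\}\bigr)
\]
and its analogue for $\theta'$, downward induction on $u$ immediately handles every $u\neq x$: the outgoing ascending-or-strict edges at such $u$ are literally identical in $\theta$ and $\theta'$, and the induction hypothesis covers the successors.

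For $u=x$ the outgoing edges do change: $\theta$ contains the strict $\dedge{x}{z}$ (absent in $\Gamma_Q$), while $\theta'$ contains the ascending $\dedge{x}{y}$ (forced by $\phibar$). If $a=\dedge{x}{y}$ then $\dedge{x}{y}$ is already in $\theta$, and $\dedge{y}{z}\in\theta$ gives $\hrv{\theta}{y}\geq\hrv{\theta}{z}$, so the recursions for $\hrv{\theta}{x}$ and $\hrv{\theta'}{x}$ collapse to the same value without further hypothesis. If $a=\dedge{y}{x}$ the two recursions match precisely when $\hrv{\theta}{y}=\hrv{\theta}{z}$; this is the main obstacle, since dropping the assumption $\sigma_2=\sigma_3$ would permit $\hrv{\theta}{y}>\hrv{\theta}{z}$, and then the new ascending edge $\dedge{x}{y}$ in $\theta'$ would unlock a strictly larger reachable maximum from $x$, breaking both the $\hrvp$-preservation and the $\sigma$ condition.

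Given the core equality, well-definedness and $\hrvp$-preservation of the restricted map are immediate. For surjectivity, take $\theta'\in\ORIENTS(Q,\sigma;\dedge{x}{y})$ and apply \refl{II:phi'} to obtain a unique $\theta\in\ORIENTS(P;a,\dedge{y}{z})$ with $\phibar(\theta)=\theta'$; the induction at $u\neq x$ (which is free of $\sigma$ hypotheses) gives $\hrv{\theta}{y}=\hrv{\theta'}{y}=\hrv{\theta'}{z}=\hrv{\theta}{z}$, after which the $u=x$ step closes the core equality and places $\theta$ in $\ORIENTS(P,\sigma;a,\dedge{y}{z})$.
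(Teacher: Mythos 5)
Your proof is correct and follows essentially the same route as the paper's: injectivity comes from \refl{II:phi'}, the highest reachable vertices of $x$, $y$, $z$ (and hence of all vertices) are checked directly to be unchanged, with $\sigma_2=\sigma_3$ needed exactly in the case $a=\dedge{y}{x}$, and surjectivity is obtained by pulling back along \refl{II:phi'} and re-running the same computation. Your recursion-plus-downward-induction phrasing is merely a repackaging of the paper's use of the auxiliary quantities $\rfun{x}$ and $\rfun{y}$, with the minor benefit of making explicit the (implicit in the paper) preservation of $\hrvp$ at vertices below $x$.
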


\begin{proof}
Set $A=\ORIENTS(P,\sigma;a,\dedge{y}{z})$ and $B=\ORIENTS(Q,\sigma;\dedge{x}{y})$. 
Let $\theta\in A$ and set $\theta'=\phibar(\theta)$.
Note that $\hrv{\theta}{y}=\hrv{\theta}{z}$ implies
\begin{eq*}
\rfun{y}(\theta')
=
\rfun{y}(\theta)
\leq
\hrv{\theta}{z}
=
\hrv{\theta'}{z}
.
\end{eq*}
Since $\dedge{y}{z}\in\theta'$ is a strict edge it follows that
\begin{eq*}
\hrv{\theta'}{y}
=
\hrv{\theta'}{z}
=
\hrv{\theta}{y}.
\end{eq*}
On the other hand $\dedge{x}{y}\in\theta'$ implies that $\hrv{\theta'}{x}\geq\hrv{\theta'}{y}$.
If $\hrv{\theta}{x}>\hrv{\theta}{z}$ then
\begin{eq*}
\hrv{\theta}{x}
=
\rfun{x}(\theta)
=
\rfun{x}(\theta')
=
\hrv{\theta'}{x}.
\end{eq*}
Otherwise
$\rfun{x}(\theta')
\leq\hrv{\theta'}{y}$ and
\begin{eq*}
\hrv{\theta'}{x}
=
\hrv{\theta'}{y}
=
\hrv{\theta}{y}
=
\hrv{\theta}{x}
.
\end{eq*}
Consequently $\phibar:A\to B$ is well-defined and $\hrvp$-preserving on $A$.

To see that $\phibar$ is also surjective let $\theta'\in B$.
By \refl{II:phi'} there exists a unique $\theta\in\ORIENTS(P;a,\dedge{y}{z})$ with $\phibar(\theta)=\theta'$.
From $\hrv{\theta'}{y}=\hrv{\theta'}{z}$ it follows that
\begin{eq*}
\rfun{y}(\theta)
=
\rfun{y}(\theta')
\leq
\hrv{\theta'}{z}
=
\hrv{\theta}{z}
.
\end{eq*}
Since $\dedge{y}{z}\in\theta$ by assumption we obtain $\hrv{\theta}{y}=\hrv{\theta}{z}$.
If $\hrv{\theta'}{x}>\hrv{\theta'}{z}$ then
\begin{eq*}
\hrv{\theta'}{x}
=
\rfun{x}(\theta')
=
\rfun{x}(\theta)
=
\hrv{\theta}{x}
.
\end{eq*}
Otherwise $\rfun{x}(\theta)\leq\hrv{\theta}{z}$ and
\begin{eq*}
\hrv{\theta}{x}
=
\hrv{\theta}{z}
=
\hrv{\theta'}{x}
\end{eq*}
since $\dedge{x}{z}$ is a strict edge in $\theta$.
We conclude that $\theta\in A$ and the proof is complete.
\end{proof}

We now collect the orientations on which we plan to use the map $\phibar$ in the proof of \reft{orientations:bounceI}.

\begin{lem}{II:changeb}
The function $\phibar$ is a $\hrvpp$-preserving bijection between the sets
\begin{eq}{II:changeb}
\ORIENTS(P,111;a,\dedge{y}{z})
&\to
\ORIENTS(Q,111;\dedge{x}{y})
\\
\ORIENTS(P,211;a,\dedge{y}{z})
&\to
\ORIENTS(Q,211;\dedge{x}{y})
\end{eq}
for all $a\in\{\dedge{x}{y},\dedge{y}{x}\}$, where all surjections $\sigma$ have domain $\{x,y,z\}$.

Let $A$ be the union of all domains in \refq{II:changeb} taken over all possible choices of $a$.
Similarly let $B$ be the union of all codomains in \refq{II:changeb}.
Then $\phibar:A\to B$ is a two-to-one function, and the fibres of $\phibar$ satisfy
\begin{eq*}
\sum_{\theta\in(\phibar)^{-1}(\theta')}q^{\asc(\theta)}
=
(q+1)q^{\asc(\theta')}
\end{eq*}
for all $\theta'\in B$.
\end{lem}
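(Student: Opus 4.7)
The plan is to deduce this lemma almost entirely from \refl{II:phi'_sigma} together with the ascent-change formula stated in \refl{II:phi'}. First I would verify that both surjections $\sigma = 111$ and $\sigma = 211$ from $\{x,y,z\}$ onto their respective images satisfy the hypotheses $\sigma_2 \leq \sigma_1$ and $\sigma_2 = \sigma_3$ of \refl{II:phi'_sigma}: this is immediate by inspection. For each such $\sigma$ and each choice of $a \in \{\dedge{x}{y}, \dedge{y}{x}\}$, applying \refl{II:phi'_sigma} with $b = \dedge{y}{z}$ then gives that
\[
\phibar : \ORIENTS(P, \sigma; a, \dedge{y}{z}) \to \ORIENTS(Q, \sigma; \dedge{x}{y})
\]
is a $\hrvp$-preserving bijection, and any $\hrvp$-preserving map is automatically $\hrvpp$-preserving. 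This establishes the four individual bijections displayed in \refq{II:changeb}.

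For the two-to-one claim, I would observe that $A$ decomposes as the disjoint union of the four sets indexed by $(a,\sigma) \in \{\dedge{x}{y},\dedge{y}{x}\} \times \{111,211\}$, while $B$ decomposes as the disjoint union of the two sets indexed by $\sigma \in \{111,211\}$. These are genuine disjoint unions because $\sigma$ is determined by $\theta$ and $a$ is determined by the orientation of $\edge{x}{y}$. Since each of the four pieces of $A$ maps bijectively onto one of the two pieces of $B$, with exactly two $A$-pieces targeting each $B$-piece (one per choice of $a$), the map $\phibar:A\to B$ is indeed two-to-one, and every fibre consists of one preimage with $a = \dedge{x}{y}$ and one with $a = \dedge{y}{x}$.

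To compute the weighted fibre sum, I would appeal to \refl{II:phi'}: for $\theta \in \ORIENTS(P;\dedge{x}{y},\dedge{y}{z})$ we have $\asc(\phibar(\theta)) = \asc(\theta) - 1$, whereas for $\theta \in \ORIENTS(P;\dedge{y}{x},\dedge{y}{z})$ we have $\asc(\phibar(\theta)) = \asc(\theta)$. Hence the two preimages $\theta_1, \theta_2$ of any $\theta' \in B$ satisfy $\asc(\theta_1) = \asc(\theta')+1$ and $\asc(\theta_2) = \asc(\theta')$, giving the required fibre sum $q^{\asc(\theta')+1} + q^{\asc(\theta')} = (q+1)q^{\asc(\theta')}$.

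Because the heavy lifting has been done in the previous two lemmas, I do not anticipate any real obstacle; the argument is essentially bookkeeping plus a direct appeal to \refl{II:phi'_sigma}. The only point meriting care is confirming that the combinatorial signature $\sigma_2 = \sigma_3$ (rather than the strict version $\sigma_2 < \sigma_3$) is precisely what allows $\phibar$ to merge the highest-reachable-vertex blocks of $y$ and $z$, distinguishing these two $\sigma$'s from the three handled by $\phi$ in \refl{II:id}.
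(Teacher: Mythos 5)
Your proposal is correct and follows the paper's own proof essentially verbatim: check that $\sigma=111,211$ satisfy the hypotheses of \refl{II:phi'_sigma} to get the four $\hrvp$-preserving (hence $\hrvpp$-preserving) bijections, note that each codomain piece receives exactly the two domain pieces indexed by the choice of $a$, and read off the $q$-weight change from \refl{II:phi'} to obtain the fibre sum $(q+1)q^{\asc(\theta')}$. (The only cosmetic slip is referring to ``$b=\dedge{y}{z}$'' as a parameter of \refl{II:phi'_sigma}, where that edge is already built into the statement.)
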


\begin{proof}
In each case $\sigma$ satisfies $\sigma_1\geq\sigma_2=\sigma_3$.
Hence \refl{II:phi'_sigma} applies.
Each codomain is in bijection with two distinct domains depending on a choice of $a$.
The claim on the $q$-weights follows from \refl{II:phi'}.
\end{proof}


Next let $\mathdefin{\psi}:\ORIENTS(P)\to\ORIENTS(Q)$ be the function defined by 
\begin{eq*}
\dedge{x}{i}\in\psi(\theta)
&\iff
\dedge{y}{i}\in\phi(\theta)
\\
\dedge{y}{i}\in\psi(\theta)
&\iff
\dedge{x}{i}\in\phi(\theta)
\end{eq*}
for all $i\in \{w,\dotsc,x-1\} \cup\{y+1,\dotsc,z-1\}$, and
\begin{eq*}
\dedge{i}{j}\in\psi(\theta)
&\iff
\dedge{i}{j}\in\phi(\theta)\setminus\{\dedge{x}{y}\}\cup\{\dedge{y}{x}\}
\end{eq*}
for all other edges.
In words, $\psi(\theta)$ is obtained from $\phi(\theta)$ by exchanging the rows $x$ and $y$, and the columns $x$ and $y$ (excluding the edge $\edge{x}{y}$ and the strict edge $\edge{y}{z}$), and by forcing the inclusion of $\dedge{y}{x}$.

\begin{exa}{II:psiExample}
As seen in the figure here, the map $\psi$
swaps the marked columns above $x$ and $y$, and the marked rows 
to the left of $x$ and $y$. 
It is a bijection for all choices $\alpha,\beta \in \{\to,\dn\}$, and it 
may decrease the number of ascents depending on orientations of $\alpha$ and $\beta$.
\[
\psi:
 \includegraphics[page=5,scale=0.8,valign=c]{figuresOrientations.pdf}%
\quad\mapsto\quad%
\includegraphics[page=6,scale=0.8,valign=c]{figuresOrientations.pdf}%
\]
\end{exa}

Our analysis of the map $\psi$ follows the same pattern as for the functions $\phi$ and $\phibar$ above.

\begin{lem}{II:psi}
Let $a\in\{\dedge{x}{y},\dedge{y}{x}\}$ and $b\in\{\dedge{y}{z},\dedge{z}{y}\}$.
Then
\begin{eq*}
\psi:
\ORIENTS(P;a,b)
\to
\ORIENTS(Q;\dedge{y}{x})
\end{eq*}
is a bijection.
Moreover $\asc(\psi(\theta))=\asc(\theta)-1$ for all $\theta\in\ORIENTS(P;\dedge{y}{x},\dedge{y}{z})$ 
and all $\theta\in\ORIENTS(P;\dedge{x}{y},\dedge{z}{y})$, and $\asc(\psi(\theta))=\asc(\theta)$ 
for all $\theta\in\ORIENTS(P;\dedge{y}{x},\dedge{z}{y})$.
\hfill\qedsymbol
\end{lem}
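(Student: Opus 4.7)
The plan is to mimic the proofs of \refl{II:phi} and \refl{II:phi'}: verify that $\psi$ lands in $\ORIENTS(Q;\dedge{y}{x})$, exhibit an inverse, and track how $\asc$ changes edge by edge. The pivotal preliminary is an edge-symmetry statement for $\uig_Q$: for every $i\in\{1,\ldots,x-1\}\cup\{y+1,\ldots,z-1\}$, the edge $\edge{x}{i}$ exists if and only if $\edge{y}{i}$ does. This is where the single-bounce-point hypothesis enters. For $v\leq i\leq x-1$ the two consecutive north steps together with $V\in\{\nstep,\dstep\}^{*}$ (no east steps) force the path to attain row at least $y$ at every column of this range, so both edges exist. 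For $i<v$ monotonicity of the path combined with $U$ ending at $(v,x-1)$ forces the path at column $i$ to stay at row at most $x-1$, so neither edge exists. For $i\in\{y+1,\ldots,z-1\}$ the step pattern $\estep\dstep$ in $Q$ places the path at row $z-1$ at column $x$ and at row $z$ at column $y$, so both edges exist.

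Once edge symmetry holds, the swap used to define $\psi$ produces a well-defined orientation of $\uig_Q$; the strict edge $\dedge{y}{z}$ is untouched and $\dedge{y}{x}$ is forced, so $\psi(\theta)\in\ORIENTS(Q;\dedge{y}{x})$. For bijectivity I would exhibit the explicit inverse: given $\theta'\in\ORIENTS(Q;\dedge{y}{x})$, apply the same swap of rows and columns $x,y$ (which is an involution on the affected edges), reinsert the strict edge $\dedge{x}{z}$ of $\uig_P$, and set the orientations of $\edge{x}{y}$ and $\edge{y}{z}$ to $a$ and $b$ respectively. A quick cardinality check, noting that $\uig_P$ has exactly one more non-strict edge than $\uig_Q$ (the missing $\edge{x}{z}$ is strict in $P$), confirms that $|\ORIENTS(P;a,b)|=|\ORIENTS(Q;\dedge{y}{x})|$ and corroborates bijectivity.

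For the ascent count I would tabulate the edges contributing differently to $\asc(\theta)$ and $\asc(\psi(\theta))$. The swap relabels ascending edges between $\{1,\ldots,x-1\}\cup\{y+1,\ldots,z-1\}$ and $\{x,y\}$ in a way that preserves their count (since $\dedge{i}{y}\in\phi(\theta)$ iff $\dedge{i}{x}\in\psi(\theta)$, and similarly for the three analogous cases), so only three special edges remain to consider. The edge $\edge{x}{y}$ contributes $+1$ in $\theta$ precisely when $a=\dedge{x}{y}$ and contributes $0$ in $\psi(\theta)$ since $\dedge{y}{x}$ is forced. The edge $\edge{y}{z}$ contributes $+1$ in $\theta$ precisely when $b=\dedge{y}{z}$ but is strict (contributing $0$) in $\psi(\theta)$. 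The edge $\edge{x}{z}$ is strict in $\uig_P$ and absent in $\uig_Q$, contributing $0$ on both sides. Summing in the three listed cases $(a,b)\in\{(\dedge{y}{x},\dedge{y}{z}),(\dedge{x}{y},\dedge{z}{y}),(\dedge{y}{x},\dedge{z}{y})\}$ yields net changes of $-1$, $-1$, and $0$, matching the statement. The only genuine obstacle is the edge-symmetry verification; the remainder is routine bookkeeping.
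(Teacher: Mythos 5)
Your proposal is correct and takes the same route the paper does: \refl{II:psi} is stated there without proof, being regarded as immediate from the definition of $\psi$, and your write-up is exactly the routine verification one would supply — edge symmetry between rows/columns $x$ and $y$, an explicit inverse for the swap, and bookkeeping of the three special edges $\edge{x}{y}$, $\edge{y}{z}$, $\edge{x}{z}$; your ascent tally in the three listed cases is right.

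Two small inaccuracies are worth flagging, though neither affects the conclusion. First, the boundary index $i=v$ is treated incorrectly: since $U$ ends at $(v,x-1)$, the path crosses column $v$ at height at most $x-1$, so neither $\edge{v}{x}$ nor $\edge{v}{y}$ exists; the correct split is $w\le i\le x-1$ (both edges exist) and $i\le v$ (neither exists). The equivalence you actually need still holds at $i=v$ (both sides are false), and nothing downstream uses these edges, so this is only an off-by-one in the justification. Second, you invoke $V\in\{\nstep,\dstep\}^{*}$, which is not among the standing hypotheses of the orientation section (and \reft{orientations:bounceI} is stated for general $V$); it is also unnecessary: for each column $c$ with $w\le c\le x-1$, the unique east or diagonal step of $P$ (equivalently of $Q$) crossing that column belongs to $V$ and hence lies at height at least $y$, which yields existence of $\edge{c}{x}$ and $\edge{c}{y}$ whether or not $V$ contains east steps. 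Finally, it would strengthen the write-up to note explicitly that none of the swapped edges is strict (same height argument); this is what guarantees both that $\psi(\theta)$ restricts to the natural orientation on the strict edges of $Q$ and that the swapped edges contribute equally to $\asc$ before and after the swap.
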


If we impose restrictions on the data we can say more.
\begin{lem}{II:psi_sigma}
Let $\sigma:\{x,y,z\}\to[m]$ be a surjection, 
let $a\in\{\dedge{x}{y},\dedge{y}{x}\}$, and let $b\in\{\dedge{y}{z},\dedge{z}{y}\}$, such that
the following two conditions are satisfied:
\begin{enumerate}[(i)]
\myi{II:psi_sigma:a}
$a=\dedge{y}{x}$ or $\sigma_1>\sigma_2$ or $\sigma_2=\sigma_3$.

\myi{II:psi_sigma:b}
$b=\dedge{z}{y}$ or $\sigma_2>\sigma_3$.
\end{enumerate}
Then
\begin{eq*}
\psi:
\ORIENTS(P,\sigma;a,b)
\to
\ORIENTS(Q,\sigma_2\sigma_1\sigma_3;\dedge{y}{x})
\end{eq*}
is a $\pi$-switching bijection.
\end{lem}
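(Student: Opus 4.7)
The plan is to mirror the arguments in \refl{II:phi_sigma} and \refl{II:phi'_sigma}: verify well-definedness and the $\hrvp$-switching property by a direct computation of highest reachable vertices on $\{x,y,z\}$, and then obtain bijectivity from a symmetric reverse recipe. Set $A \coloneqq \ORIENTS(P,\sigma;a,b)$ and $B \coloneqq \ORIENTS(Q,\sigma_2\sigma_1\sigma_3;\dedge{y}{x})$, and for $\theta \in A$ write $\theta' \coloneqq \psi(\theta)$. By construction $\dedge{y}{x} \in \theta'$, and the strict edge of $\Gamma_Q$ is $\edge{y}{z}$ oriented $\dedge{y}{z}$; hence $\theta' \in \ORIENTS(Q;\dedge{y}{x})$.

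The structural observation driving the proof is that, apart from the edge $\edge{x}{y}$ and the strict edge $\edge{y}{z}$, the map $\psi$ amounts to applying the transposition $\tau = (x,y)$ to the neighborhoods of $x$ and $y$: orientations of edges not touching $\{x,y\}$ are unchanged, while for $i \in \{1,\dotsc,x-1\} \cup \{y+1,\dotsc,z-1\}$ the orientations of $\edge{i}{x}$ and $\edge{i}{y}$ (respectively $\edge{x}{i}$ and $\edge{y}{i}$) get interchanged. Consequently, directed paths from a vertex $i \notin \{x,y\}$ in $\theta'$ via strict and ascending edges correspond bijectively, via $\tau$ applied to every intermediate vertex, to such paths in $\theta$; this immediately yields $\hrv{\theta'}{i} = \tau(\hrv{\theta}{i})$ for $i \notin \{x,y\}$ and in particular $\hrv{\theta'}{z} = \hrv{\theta}{z}$.

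The real content is verifying $\hrv{\theta'}{x} = \tau(\hrv{\theta}{y})$ and $\hrv{\theta'}{y} = \tau(\hrv{\theta}{x})$, and this is where conditions~(i) and~(ii) enter. The forced edge $\dedge{y}{x}$ in $\theta'$ blocks $x$ from using $\edge{x}{y}$, so $\hrv{\theta'}{x}$ equals the $\tau$-image of $\rfun{y}(\theta)$; condition~(ii) then forces $\rfun{y}(\theta) = \hrv{\theta}{y}$, since either $\dedge{z}{y} \in \theta$ makes $\edge{y}{z}$ unusable from $y$, or $\sigma_2 > \sigma_3$ shows $y$ does not rely on $\dedge{y}{z}$ to attain its highest vertex. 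Dually, the new strict edge $\dedge{y}{z} \in \theta'$ lets $y$ reach $\hrv{\theta'}{z} = \hrv{\theta}{z}$, and condition~(i) splits into three sub-cases---$a = \dedge{y}{x}$ already present, $\sigma_1 > \sigma_2$ so $x$ does not depend on $\edge{x}{y}$ in $\theta$, or $\sigma_2 = \sigma_3$ so that routing through the new strict edge loses nothing---each giving $\hrv{\theta'}{y} = \tau(\hrv{\theta}{x})$. Together these yield $\hrvp(\theta') = \tau(\hrvp(\theta))$ and $\wst(\hrv{\theta'}{x}, \hrv{\theta'}{y}, \hrv{\theta'}{z}) = \sigma_2\sigma_1\sigma_3$, so $\theta' \in B$. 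Bijectivity then follows from a symmetric reverse recipe whose validity is established by the mirror $\hrv$-analysis.

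The main obstacle is the sub-case analysis for condition~(i) when $a = \dedge{x}{y}$: the delicate point is that $\edge{x}{y}$ may be essential for $x$ to reach its highest vertex in $\theta$, so without $\sigma_1 > \sigma_2$ or $\sigma_2 = \sigma_3$ to compensate the replacement by $\dedge{y}{x}$ in $\theta'$ could strictly decrease $\hrv{\theta'}{y}$; the two alternative hypotheses in~(i) are designed precisely to rule this out. The remaining bookkeeping across all combinations of $a \in \{\dedge{x}{y},\dedge{y}{x}\}$ and $b \in \{\dedge{y}{z},\dedge{z}{y}\}$ is routine but requires careful attention to the interaction between the two strict edges involved.
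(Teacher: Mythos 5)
Your plan does follow the same route as the paper (compute the highest reachable vertices at $x,y,z$ via the functions $\rfun{x},\rfun{y}$, with conditions (i)--(ii) entering exactly where you say, and get injectivity from \refl{II:psi}), but the opening ``structural observation'' is false as stated, and it is the step that carries the conclusion for the vertices $i<x$. Applying $\tau$ to a directed path in $\theta$ does not in general produce a directed path in $\theta'$: a step $x\to y$ (possible when $a=\dedge{x}{y}$) becomes the unusable descent $\dedge{y}{x}$, and a step $y\to z$ (possible when $b=\dedge{y}{z}$, e.g.\ in the case $\sigma=121$, $a=\dedge{y}{x}$, $b=\dedge{y}{z}$ appearing in \refl{II:switch}) becomes a step from $x$ to $z$, which is not even an edge of $\Gamma_Q$. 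So the correspondence is only an injection from paths in $\theta'$ into paths in $\theta$, and $\hrv{\theta'}{i}=\tau(\hrv{\theta}{i})$ for $i<x$ is not ``immediate''; it actually fails without condition (ii) (take $i$ whose only ascent is $\dedge{i}{y}$ and $y$ whose only ascent is $\dedge{y}{z}$: then $\hrv{\theta}{i}=\hrv{\theta}{z}$ but $\hrv{\theta'}{i}=x$). Under the hypotheses the claim for $i<x$ is true, but it must be derived \emph{after} the analysis at $x,y,z$, for instance by the descending induction of the paper: once $\hrv{\theta'}{x}$ and $\hrv{\theta'}{y}$ are identified with ($\tau$ of) $\hrv{\theta}{y}$ and $\hrv{\theta}{x}$, the neighbourhood swap gives $\{\hrv{\theta'}{j}:\dedge{i}{j}\in\theta'\}=\{\hrv{\theta}{j}:\dedge{i}{j}\in\theta\}$ for $i\in[x-1]$. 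As written, your argument is circular in ordering: the $i\notin\{x,y\}$ case is asserted before, and independently of, the very conditions that make it true (only the case $i>y$ is genuinely immediate).

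The second gap is bijectivity. Injectivity does follow from \refl{II:psi}, but surjectivity onto $\ORIENTS(Q,\sigma_2\sigma_1\sigma_3;\dedge{y}{x})$ is not a ``symmetric reverse recipe'': given $\theta'$ in the codomain one takes the unique $\theta\in\ORIENTS(P;a,b)$ with $\psi(\theta)=\theta'$ and must prove that the weak standardization of $\hrv{\theta}{\,\cdot\,}$ on $\{x,y,z\}$ is $\sigma$, so that $\theta$ lies in the stated domain. This uses conditions (i) and (ii) a second time in a genuinely different way: one has the inequalities $\hrv{\theta}{y}\geq\rfun{y}(\theta)=\rfun{x}(\theta')=\hrv{\theta'}{x}$ and $\hrv{\theta}{x}\geq\max(\rfun{x}(\theta),\hrv{\theta}{z})=\max(\rfun{y}(\theta'),\hrv{\theta'}{z})=\hrv{\theta'}{y}$, and the conditions are needed to force equality in each. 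Since the hypotheses are phrased in terms of $\sigma$, $a$, $b$ attached to $\theta$, not to $\theta'$, this half of the argument is not a mirror image of the first; it constitutes roughly half of the paper's proof and is missing from yours.
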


\begin{proof}
Let $A=\ORIENTS(P,\sigma;a,b)$ and $B=\ORIENTS(Q,\sigma_2\sigma_1\sigma_3;\dedge{y}{x})$.
We first show that $\psi:A\to\ORIENTS(Q)$ is $\pi$-switching.
This implies that $\psi:A\to B$ is well-defined, that is, that $\psi$ really maps $A$ to $B$.
The injectivity of $\psi$ follows from \refl{II:psi}.
To conclude the proof we then show that $\psi:A\to B$ is also surjective.

Let $\theta\in A$ and set $\theta'=\psi(\theta)$.
Clearly $\hrv{\theta'}{i}=\hrv{\theta}{i}$ for all $i>y$.
Since $\dedge{y}{z}$ is a strict edge in $\theta'$ we have
\begin{eq*}
\hrv{\theta'}{y}
=
\max(\rfun{y}(\theta'),\hrv{\theta'}{z})
=
\max(\rfun{x}(\theta),\hrv{\theta}{z})
.
\end{eq*}
By assumption~\refi{II:psi_sigma:a} this implies $\hrv{\theta'}{y}=\hrv{\theta}{x}$.
On the other hand $\hrv{\theta'}{x}=\rfun{y}(\theta)$.
By assumption~\refi{II:psi_sigma:b} this implies $\hrv{\theta'}{x}=\hrv{\theta}{y}$.
For all $i\in[x-1]$ we have
\begin{eq*}
\{\hrv{\theta'}{j}:\dedge{i}{j}\in\theta'\}
=
\{\hrv{\theta}{j}:\dedge{i}{j}\in\theta\}
\end{eq*}
and therefore $\hrv{\theta'}{i}=\hrv{\theta}{i}$.
Thus $\psi$ is $\hrvp$-switching on $A$ as claimed.

To show that $\psi:A\to B$ is surjective let $\theta'\in B$.
By \refl{II:psi} there exists a unique $\theta\in\ORIENTS(P;a,b)$ such that $\psi(\theta)=\theta'$.
Clearly $\hrv{\theta}{z}=\hrv{\theta'}{z}$.
First note that
\begin{eq}{II:psi:surj_hrvy}
\hrv{\theta}{y}
\geq
\rfun{y}(\theta)
=
\rfun{x}(\theta')
=
\hrv{\theta'}{x}
.
\end{eq}
Condition~\refi{II:psi_sigma:b} implies equality in \refq{II:psi:surj_hrvy}.
Since $\dedge{x}{z}\in\theta$ and $\dedge{y}{z}\in\theta'$ are strict edges we obtain
\begin{eq}{II:psi:surj_hrvx}
\hrv{\theta}{x}
\geq
\max(\rfun{x}(\theta),\hrv{\theta}{z})
=
\max(\rfun{y}(\theta'),\hrv{\theta'}{z})
=
\hrv{\theta'}{y}
.
\end{eq}
Condition~\refi{II:psi_sigma:a} implies equality \refq{II:psi:surj_hrvx}, and the proof is complete.
\end{proof}

We now collect the orientations on which we plan to use 
the map $\psi$ in the proof of \reft{orientations:bounceI}.

\begin{lem}{II:switch}
The function $\psi$ is a $\hrvpp$-preserving bijection between sets
\begin{eq}{II:switch}
\ORIENTS(P,111;a,\dedge{z}{y})
&\to
\ORIENTS(Q,111;\dedge{y}{x})
\\
\ORIENTS(P,211;a,\dedge{z}{y})
&\to
\ORIENTS(Q,121;\dedge{y}{x})
\\
\ORIENTS(P,212;a,\dedge{z}{y})
&\to
\ORIENTS(Q,122;\dedge{y}{x})
\\
\ORIENTS(P,121;\dedge{y}{x},b)
&\to
\ORIENTS(Q,211;\dedge{y}{x})
\\
\ORIENTS(P,312;a,\dedge{z}{y})
&\to
\ORIENTS(Q,132;\dedge{y}{x})
\end{eq}
for all $a\in\{\dedge{x}{y},\dedge{y}{x}\}$ and $b\in\{\dedge{y}{z},\dedge{z}{y}\}$, 
where all surjections have domain $\{x,y,z\}$.

Let $A$ be the union of all domains in \refq{II:switch} taken over all possible choices of $a$ and $b$.
Similarly let $B$ be the union of all codomains in \refq{II:switch}.
Then $\psi:A\to B$ is a two-to-one function, and the fibres of $\psi$ satisfy
\begin{eq*}
\sum_{\theta\in(\psi)^{-1}(\theta')}q^{\asc(\theta)}
=
(q+1)q^{\asc(\theta')}
\end{eq*}
for all $\theta'\in B$.
\end{lem}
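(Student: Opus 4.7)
The plan is to reduce each of the five asserted bijections in \refq{II:switch} to \refl{II:psi_sigma}, and then to assemble the pieces into the two-to-one statement on the disjoint unions $A$ and $B$.

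First I would verify that the hypotheses of \refl{II:psi_sigma} hold in each of the five cases. Condition~\refi{II:psi_sigma:b} is automatic in cases $1,2,3,5$ because $b=\dedge{z}{y}$ is fixed, and in case $4$ it follows from $\sigma_2=2>1=\sigma_3$. Condition~\refi{II:psi_sigma:a} holds because $\sigma_2=\sigma_3$ in cases $1$ and $2$, because $\sigma_1>\sigma_2$ in cases $3$ and $5$, and because $a=\dedge{y}{x}$ is fixed in case $4$. Reading off the image surjection $\sigma_2\sigma_1\sigma_3$ in each case reproduces exactly the codomain surjections $111, 121, 122, 211, 132$ listed in \refq{II:switch}. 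Since the transposition $\tau=(x,y)$ leaves the multiset of block sizes of any set partition unchanged, the $\hrvp$-switching bijection delivered by \refl{II:psi_sigma} is automatically $\hrvpp$-preserving.

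Next I would assemble the pieces. The five domain surjections $111, 211, 212, 121, 312$ are pairwise distinct, and so are the five image surjections $111, 121, 122, 211, 132$; hence the five bijections combine without overlap into a well-defined map $\psi:A\to B$, and it suffices to analyze the fibres case by case. In each case a given $\theta'\in B$ has precisely two preimages in $A$: in cases $1,2,3,5$ they correspond to the two choices of $a\in\{\dedge{x}{y},\dedge{y}{x}\}$ with $b=\dedge{z}{y}$ fixed, and in case $4$ they correspond to the two choices of $b\in\{\dedge{y}{z},\dedge{z}{y}\}$ with $a=\dedge{y}{x}$ fixed.

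The fibre weight then drops out of \refl{II:psi}: in every situation one preimage $\theta$ satisfies $\asc(\psi(\theta))=\asc(\theta)$ while the other satisfies $\asc(\psi(\theta))=\asc(\theta)-1$, so the two contribute $q^{\asc(\theta')}$ and $q^{\asc(\theta')+1}$ respectively, summing to $(q+1)q^{\asc(\theta')}$. The argument is almost entirely bookkeeping rather than insight; the one place requiring attention is matching the allowed pairs $(a,b)$ to the hypotheses of \refl{II:psi_sigma} so that every orientation in $A$ is accounted for exactly once and none is double-counted across the five bijections.
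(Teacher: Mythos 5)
Your proposal is correct and follows essentially the same route as the paper: verify the hypotheses of \refl{II:psi_sigma} case by case (reading off the codomain surjections $\sigma_2\sigma_1\sigma_3$), observe that each codomain is hit by exactly two domains corresponding to the choice of $a$ or $b$, and deduce the $(q+1)$-weight of each fibre from the ascent statements in \refl{II:psi}. The only difference is that you spell out the bookkeeping (distinctness of the surjections, which preimage loses an ascent) that the paper leaves implicit.
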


\begin{proof}
The data $\sigma,a,b$ in each of the domains satisfies 
Conditions~\refi{II:psi_sigma:a} and~\refi{II:psi_sigma:b} in \refl{II:psi_sigma}.
Hence the lemma applies.
Each codomain is in bijection with two distinct domains depending on a choice of $a$ or $b$.
The claim on the $q$-weights follows from \refl{II:psi}.
\end{proof}

The results of this section combine to a proof of the desired 
bounce relation among the symmetric functions $\LLTe_P(\xvec;q+1)$.

\begin{proof}[Proof of \reft{orientations:bounceI}]
Define a function $\Phi:\ORIENTS(P)\to\ORIENTS(Q)$ by letting
\begin{eq*}
\Phi(\theta)
=
\begin{cases}
\phi(\theta)\\
\phibar(\theta)\\
\psi(\theta)
\end{cases}
\end{eq*}
according to Table~\ref{Table:orientations:bounceI}.
The reader should verify that the table covers all orientations in $\ORIENTS(P)$ and $\ORIENTS(Q)$.
Lemmas~\ref{Lemma:II:id}, \ref{Lemma:II:changeb} and~\ref{Lemma:II:switch} 
show that $\Phi$ is a $\hrvpp$-preserving two-to-one function 
such that the fibres of $\Phi$ satisfy
\begin{eq*}
\sum_{\theta\in\phi^{-1}(\theta')}q^{\asc(\theta)}
=
(q+1)q^{\asc(\theta')}
\end{eq*}
for all $\theta'\in\ORIENTS(Q)$.
The claim follows. 
\end{proof}

\subsection{The second bounce relation for orientations}

We now treat the case $st=\dstep\nstep$.
Our goal is to prove the following theorem.

\begin{thm}{orientations:bounceIII}
Let $P\in\SCH{}$ be a Schr{\"o}der path and let $(x,z)\in\setZ^2$ be a 
point on $P$ with $z>x+1$ such that the bounce path of $P$ at $(x,z)$ has a 
single bounce point, and such that the bounce decomposition of $P$ at $(x,z)$ 
is given by $P=U\dstep\bs\nstep V\dstep\bs\estep W$.
Set $Q\coloneqq U\nstep\bs\dstep V\estep\bs\dstep W$.
Then
\begin{eq*}
\LLTe_{P}(\xvec;q+1)
=
\LLTe_{Q}(\xvec;q+1)
.
\end{eq*}
\end{thm}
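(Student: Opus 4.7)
The approach is to construct a bijection $\Phi:\ORIENTS(P)\to\ORIENTS(Q)$ that preserves both the statistic $\asc$ and the integer partition $\hrvpp$. Any such bijection immediately yields the desired identity $\LLTe_P(\xvec;q+1)=\LLTe_Q(\xvec;q+1)$ by the definition~\refq{conjFormula}. The construction proceeds in the same style as in the proof of \reft{orientations:bounceI}.

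First I would analyze the decorated unit-interval graphs $\uig_P$ and $\uig_Q$. Writing $v$ for the $x$-coordinate of the bounce endpoint and $y=x+1$, one can read off from the definition of a Schr\"oder path that $\uig_P$ and $\uig_Q$ agree on every edge except those incident to the vertices $v,x,y,z$. More precisely, $\edge{v}{x}$ is strict in $P$ and non-strict in $Q$, $\edge{v}{y}$ is absent from $P$ and strict in $Q$, $\edge{x}{z}$ is strict in $P$ and absent from $Q$, and $\edge{y}{z}$ is non-strict in $P$ and strict in $Q$. All other edges, including the edges $\edge{x}{r}$ for $y\leq r\leq z-1$ coming from cells in column $x$, are identical in $\uig_P$ and $\uig_Q$. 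In particular $|\ORIENTS(P)|=|\ORIENTS(Q)|$.

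Next I would define $\Phi$ by case analysis in the spirit of Lemmas~\refl{II:id}, \refl{II:changeb} and~\refl{II:switch}. The simplest building block sends $\theta\in\ORIENTS(P)$ to the orientation $\theta'\in\ORIENTS(Q)$ in which $\edge{v}{x}$ inherits the direction of $\edge{y}{z}$ in $\theta$ while every other non-strict edge keeps its orientation; further building blocks additionally exchange the orientations involving the rows and columns of $v$ and $x$, mimicking the map $\psi$ of \refl{II:psi_sigma}. I would partition $\ORIENTS(P)$ and $\ORIENTS(Q)$ according to the weak standardisation $\sigma$ of $\hrv{\theta}{\cdot}$ on $\{v,x,y,z\}$ together with the orientations of the relevant non-strict edges, and verify on each cell of the resulting table that the selected building block is a $\hrvp$-preserving (or $\hrvp$-switching, via the transposition $(v\,x)$) bijection onto the corresponding subset of $\ORIENTS(Q)$, with $\asc$ changing by the expected amount.

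The main obstacle will be the configurations in which $x$ reaches $z$ in $P$ solely through the strict edge $\edge{x}{z}$ which no longer exists in $Q$. In such cases the naive swap $\dedge{y}{z}\leftrightarrow\dedge{v}{x}$ would split the $\hrv$-block containing $x$, so $\Phi$ must instead route the reachability of $x$ through the new chain $x\to v\to y\to z$ in $Q$ by forcing the orientation $\dedge{x}{v}$ and, in the more delicate subcases, exchanging the rows and columns of $v$ and $x$ so that the asc-contribution of the edges at $x$ is transported to $v$. After assembling these pieces into one map, a tabulation analogous to Table~\ref{Table:orientations:bounceI} verifies that $\Phi$ is a bijection preserving both $\asc$ and $\hrvpp$, which concludes the proof.
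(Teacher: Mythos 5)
Your setup is right: the comparison of $\uig_P$ and $\uig_Q$ is correct, the overall plan (a transfer map plus a ``switching'' map, organised by a case table over weak standardisations and the orientations of the special edges, with $\asc$ preserved exactly) is the paper's plan, and your first building block is precisely the map $\phi$ of \refl{IV:phi}. The gap is in the switching block. You propose to exchange the rows and columns of $v$ and $x$ and to verify $\hrvp$-switching for the transposition $(v\,x)$, but this cannot work. First, it is not even a map of orientations: the free edges in column $x$ of $P$ are $\edge{x}{j}$ for $y\leq j\leq z-1$, while column $v$ of $Q$ has no cells in those rows in general, so ``swapping the columns of $v$ and $x$'' does not send edges of $\uig_P$ to edges of $\uig_Q$. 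Second, the transposition is forced to be $(x,y)$, not $(v\,x)$: take the unique orientation $\theta\in\ORIENTS(P)$ with no ascending edges (e.g.\ for $P=\nstep\dstep\nstep\dstep\estep\estep$, $Q=\nstep\nstep\dstep\estep\dstep\estep$, so $v,x,y,z=1,2,3,4$). Then $\hrvp(\theta)$ has $v,x,z$ in one block and $\{y\}$ as a singleton, whereas the unique ascent-free orientation of $Q$ has $v,y,z$ in one block and $\{x\}$ as a singleton. Any $\asc$-preserving bijection must match these two, and their set partitions differ by exchanging $x$ and $y$; applying $(v\,x)$ fixes the $P$-partition (since $v,x$ share a block) and does not give the $Q$-partition. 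This is exactly why the paper's $\psi$ swaps the rows and columns of $x$ and $y$ (shifting the special chain $\edge{y}{z}\mapsto\edge{x}{y}\mapsto\edge{v}{x}$) and is shown in \refl{IV:psi_sigma} to be $\hrvp$-switching for $\tau=(x,y)$.

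Your proposed fix for the delicate configurations also misreads the statistic: you suggest that when $x$ reaches $z$ in $P$ only via the strict edge $\edge{x}{z}$ one should ``route the reachability of $x$ through $x\to v\to y\to z$ in $Q$ by forcing the orientation $\dedge{x}{v}$''. But $\hrv{\theta'}{x}$ is computed using only strict and ascending edges, and $\dedge{x}{v}$ is neither strict in $Q$ nor ascending (it points from the larger vertex to the smaller), so it contributes nothing to reachability; with that orientation $x$ simply loses access to $z$ and $\hrvpp$ changes. The correct resolution is the one above: in those subcases the image orientation lets $y$, which carries the strict edges $\edge{v}{y}$ and $\edge{y}{z}$ of $Q$, take over the role that $x$ played in $P$, and the hypotheses on $\sigma$ and on the orientations of $\edge{x}{y}$, $\edge{y}{z}$ in \refl{IV:psi_sigma} guarantee that only $x$ and $y$ are exchanged in $\hrvp$ while $\hrv{\theta}{v}$ is unchanged; the matching of all subcases is then the content of \refl{IV:id}, \refl{IV:switch} and Table~\ref{Table:orientations:bounceII}. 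As written, your switching block neither exists as a map nor produces the right partitions, so the proof has a genuine gap.
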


A proof of \reft{orientations:bounceIII} is given at the end of this subsection.
Our strategy is to find a $\hrvpp$-preserving bijection $\Phi:\ORIENTS(P)\to\ORIENTS(Q)$ such that the $\asc(\Phi(\theta))=\asc(\theta)$ for all $\theta\in\ORIENTS(P)$.
The bijection $\Phi$ is made up of two building blocks $\phi$ and $\psi$.

Let $\phi:\ORIENTS(P)\to\ORIENTS(Q)$ be the function defined by
\begin{eq*}
\mathdefin{\phi(\theta)}
\coloneqq
\begin{cases}
\theta\setminus\{\dedge{x}{z}\}\cup\{\dedge{v}{y}\}
&\quad\text{if }\dedge{y}{z}\in\theta,
\\
\theta\setminus\{\dedge{v}{x},\dedge{x}{z},\dedge{z}{y}\}\cup\{\dedge{x}{v},\dedge{v}{y},\dedge{y}{z}\}
&\quad\text{if }\dedge{z}{y}\in\theta.
\end{cases}
\end{eq*}
In words, $\phi(\theta)$ is obtained from $\theta$ by adding and removing 
some strict edges as needed, by demanding $\dedge{x}{v}\in\phi(\theta)$ if 
and only if $\dedge{y}{z}\in\theta$, and by leaving all other edges unchanged.

\begin{exa}{IV:phiExample}
The function $\phi$ is a bijection for all choices of $\alpha,\beta \in \{\to,\dn\}$.
\[
\phi:
 \includegraphics[page=7,scale=0.8,valign=c]{figuresOrientations.pdf}%
\quad\mapsto\quad%
\includegraphics[page=8,scale=0.8,valign=c]{figuresOrientations.pdf}%
\]
\end{exa}

The following result is immediate from this definition.
\begin{lem}{IV:phi}
Let $b\in\{\dedge{x}{y},\dedge{y}{x}\}$, and 
let $(a,c)\in\{(\dedge{v}{x},\dedge{y}{z}),(\dedge{x}{v},\dedge{z}{y})\}$.
Then
\begin{eq*}
\phi:
\ORIENTS(P;b,c)
\to
\ORIENTS(Q;a,b)
\end{eq*}
is a bijection.
Moreover $\asc(\phi(\theta))=\asc(\theta)$ for all $\theta\in\ORIENTS(P)$.
\hfill\qedsymbol
\end{lem}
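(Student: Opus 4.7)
The plan is to check both claims by direct case analysis on the two prescribed pairs $(a,c)$; the orientation $b$ of the edge $\edge{x}{y}$ plays a spectator role that $\phi$ copies across without modification. First I would make precise the combinatorial comparison of $\uig_P$ and $\uig_Q$. The paths $P$ and $Q$ coincide outside the two small windows $[v-1,v]\times[x-1,y]$ and $[x-1,y]\times[z-1,z]$; inside these windows the paths traverse different intermediate lattice points but share their endpoints. A direct inspection of the four cells in these windows shows that $\uig_P$ and $\uig_Q$ agree on the edge set and on the strict/non-strict labels of every edge, with four exceptions: $\edge{x}{z}$ is strict in $\uig_P$ but absent from $\uig_Q$; $\edge{v}{y}$ is strict in $\uig_Q$ but absent from $\uig_P$; $\edge{v}{x}$ is strict in $\uig_P$ and non-strict in $\uig_Q$; and $\edge{y}{z}$ is non-strict in $\uig_P$ and strict in $\uig_Q$.

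With this comparison in hand, verifying that $\phi$ sends $\ORIENTS(P;b,c)$ into $\ORIENTS(Q;a,b)$ is a line-by-line reading of the formulas. When $c=\dedge{y}{z}$, removing $\dedge{x}{z}$ deletes the $P$-strict arc whose underlying edge is absent in $\uig_Q$, adding $\dedge{v}{y}$ supplies the new $Q$-strict arc, the kept arc $\dedge{v}{x}$ becomes the required $a=\dedge{v}{x}$, and the kept arc $\dedge{y}{z}$ falls into its forced strict direction in $\uig_Q$. When $c=\dedge{z}{y}$, the same happens, with the additional reversals $\dedge{v}{x}\mapsto\dedge{x}{v}=a$ and $\dedge{z}{y}\mapsto\dedge{y}{z}$ taking care of the arcs whose directions did not match. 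The bijection then follows by writing down the obvious inverse---reinstall $\dedge{x}{z}$, delete $\dedge{v}{y}$, and, in the second case, re-reverse $\edge{v}{x}$ and $\edge{y}{z}$---and checking that the two maps compose to the identity; the symmetry of the formulas makes this routine.

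For the ascent claim, I would tabulate the $\asc$-contribution of each pivot edge on the two sides of $\phi$. In the first case, $\edge{v}{x}$ moves from strict (contribution $0$) to non-strict low-to-high (contribution $+1$), whereas $\edge{y}{z}$ moves from non-strict low-to-high (contribution $+1$) to strict (contribution $0$); these changes cancel, while $\edge{v}{y}$ and $\edge{x}{z}$ are either strict or absent in both graphs and contribute nothing. In the second case, both $\dedge{x}{v}$ and $\dedge{z}{y}$ are oriented high-to-low, so none of the four pivot edges contributes to either side. Every non-pivot edge keeps its orientation and its strict/non-strict status under $\phi$ and therefore contributes equally to $\asc(\theta)$ and $\asc(\phi(\theta))$. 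Combining these two facts yields $\asc(\phi(\theta))=\asc(\theta)$ for all $\theta\in\ORIENTS(P)$.

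The main obstacle is the cell-by-cell verification in the first paragraph: one needs to confirm that no edge outside the four pivot edges migrates between the strict, non-strict, and absent categories when passing from $P$ to $Q$. Once that is in place the remaining steps are routine bookkeeping, which is why the authors present this lemma as immediate from the definitions.
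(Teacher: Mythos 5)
Your proposal is correct and matches what the paper intends: the paper states this lemma without proof (marking it as immediate from the definition of $\phi$), and your argument is exactly the routine verification being omitted — comparing $\uig_P$ and $\uig_Q$, noting that only the four edges $\edge{v}{x}$, $\edge{v}{y}$, $\edge{x}{z}$, $\edge{y}{z}$ change status, and checking well-definedness, the explicit inverse, and the cancellation of ascent contributions in each of the two cases. Your identification of the four pivot edges and their strict/non-strict/absent statuses in $P$ versus $Q$ is accurate, so the bookkeeping goes through as you describe.
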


If we impose restrictions on $\sigma$ we can say more.
\begin{lem}{IV:phi_sigma}
Let $\sigma:\{x,y,z\}\to[m]$ be a surjection, let $b\in\{\dedge{x}{y},\dedge{y}{x}\}$, 
and let $(a,c)\in\{(\dedge{v}{x},\dedge{y}{z}),(\dedge{x}{v},\dedge{z}{y})\}$, 
such that the following conditions are satisfied:
\begin{enumerate}[(i)]
\myi{IV:phi_sigma:12}
Either $\sigma_1=\sigma_2$, or $\sigma_1>\sigma_2$ and $a=\dedge{v}{x}$.

\myi{IV:phi_sigma:13}
$\sigma_1>\sigma_3$.

\myi{IV:phi_sigma:23}
$\sigma_2>\sigma_3$ or $c=\dedge{y}{z}$.
\end{enumerate}
Then
\begin{eq*}
\phi:
\ORIENTS(P,\sigma;b,c)
\to
\ORIENTS(Q,\sigma;a,b)
\end{eq*}
is a bijection, and
$\hrvp(\phi(\theta))=\hrvp(\theta)$ for all $\theta\in\ORIENTS(P,\sigma;b,c)$.
\end{lem}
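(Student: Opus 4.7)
The plan is to leverage \refl{IV:phi}, which already establishes that $\phi$ restricts to a bijection $\ORIENTS(P;b,c)\to\ORIENTS(Q;a,b)$. It will therefore suffice to verify, under Conditions~\refi{IV:phi_sigma:12}--\refi{IV:phi_sigma:23}, that $\hrv{\phi(\theta)}{u}=\hrv{\theta}{u}$ for every $u\in[n]$ and every $\theta\in\ORIENTS(P,\sigma;b,c)$; this immediately preserves the set partition $\hrvp$, and the same identities read from right to left send $\ORIENTS(Q,\sigma;a,b)$ back into $\ORIENTS(P,\sigma;b,c)$. Since ascending and strict edges always point from a smaller to a larger vertex, a walk starting at $u$ only visits vertices $\geq u$; moreover $\phi$ alters only edges with both endpoints in $\{v,x,y,z\}$. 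Consequently $\hrv{\phi(\theta)}{u}=\hrv{\theta}{u}$ is automatic for $u>z$, and for any other vertex $u$ outside $\{v,x,y,z\}$ its out-edges are unchanged, so the equality reduces to the corresponding equalities at the first vertex in $\{v,x,y,z\}$ reached by some walk; it therefore suffices to verify $\hrv{\phi(\theta)}{u}=\hrv{\theta}{u}$ on $\{v,x,y,z\}$.

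The heart of the proof is a case analysis on $(a,c)\in\{(\dedge{v}{x},\dedge{y}{z}),(\dedge{x}{v},\dedge{z}{y})\}$ combined with $b\in\{\dedge{x}{y},\dedge{y}{x}\}$. For $u=z$ nothing changes. For $u=y$, one uses $\rfun{y}(\phi(\theta))=\rfun{y}(\theta)$: in Case~1 both $\theta$ and $\phi(\theta)$ contain $\dedge{y}{z}$ (ascending in $\theta$, strict in $\phi(\theta)$), so $\hrv{\phi(\theta)}{y}=\hrv{\theta}{y}$ at once; in Case~2 the edge $\dedge{y}{z}$ is newly strict in $\phi(\theta)$, but Condition~\refi{IV:phi_sigma:23} together with $c=\dedge{z}{y}$ forces $\sigma_2>\sigma_3$, whence $\rfun{y}(\theta)>\hrv{\theta}{z}$ and the new strict edge contributes nothing to $y$'s reach. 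For $u=x$, the strict edge $\dedge{x}{z}\in\theta$ is deleted: when $b=\dedge{x}{y}$, $x$ still reaches $y$ and $\hrv{\theta}{y}\geq\hrv{\theta}{z}$ (from $\dedge{y}{z}\in\theta$ in Case~1 and from Condition~\refi{IV:phi_sigma:23} in Case~2) absorbs the loss; when $b=\dedge{y}{x}$, Condition~\refi{IV:phi_sigma:13} gives $\rfun{x}(\theta)>\hrv{\theta}{z}$, hence $\hrv{\theta}{x}=\rfun{x}(\theta)$ and the deletion is invisible.

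For $u=v$, the new strict edge $\dedge{v}{y}\in\phi(\theta)$ gives $v$ access to $\hrv{\theta}{y}$. In Case~1 ($a=\dedge{v}{x}$), $v$ also continues to reach $\hrv{\theta}{x}$ through the ascending $\dedge{v}{x}\in\phi(\theta)$, and Condition~\refi{IV:phi_sigma:12} implies $\hrv{\theta}{y}\leq\hrv{\theta}{x}$, so the new contribution is dominated by what $v$ already reached in $\theta$. In Case~2 ($a=\dedge{x}{v}$), $v$ loses its direct link to $x$, but Condition~\refi{IV:phi_sigma:12} forces $\sigma_1=\sigma_2$, i.e.\ $\hrv{\theta}{x}=\hrv{\theta}{y}$, so the loss is exactly compensated by the new strict edge. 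The main obstacle is combinatorial bookkeeping: each of the three hypotheses on $\sigma$ is exactly what rules out the single configuration in which an $\hrv$-value would otherwise fail to transfer, and one must pair the two cases for $(a,c)$ with the two cases for $b$ to confirm that every scenario is covered.
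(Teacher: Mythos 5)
Your proposal is correct and follows essentially the same route as the paper's proof: invoke \refl{IV:phi} for the underlying bijection and ascent statement, observe that only edges inside $\{v,x,y,z\}$ change so it suffices to track $\hrv{\theta}{u}$ there, and use Conditions~\refi{IV:phi_sigma:12}--\refi{IV:phi_sigma:23} exactly as the paper does at $v$, $x$ and $y$ (your use of $\hrv{\theta}{y}\geq\hrv{\theta}{z}$ at $x$ when $b=\dedge{x}{y}$ is a harmless variant of the paper's appeal to \refi{IV:phi_sigma:13}). Your quick "read the identities right to left" for surjectivity is the paper's argument too, and it is legitimate because $\rfun{x}$, $\rfun{y}$ and $\hrv{\cdot}{z}$ are $\phi$-invariant, so the $\sigma$-inequalities known for $\theta'$ transfer back to $\theta$.
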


\begin{proof}
Set $A=\ORIENTS(P,\sigma;b,c)$ and $B=\ORIENTS(Q,\sigma;a,b)$.

Let $\theta\in A$, and set $\theta'=\phi(\theta)$.
By Condition~\refi{IV:phi_sigma:23} we have
\begin{eq}{IV:phi:hrvy}
\hrv{\theta}{y}
=
\max(\rfun{y}(\theta),\hrv{\theta}{z})
=
\max(\rfun{y}(\theta'),\hrv{\theta'}{z})
=
\hrv{\theta'}{y}.
\end{eq}
Similarly Condition~\refi{IV:phi_sigma:13} implies that the 
edge $\dedge{x}{z}$ is not needed to reach $\hrv{\theta}{x}$ from $x$ 
using strict and ascending edges in $\theta$.
Hence $\hrv{\theta'}{x}=\hrv{\theta}{x}$.
Consequently Condition~\refi{IV:phi_sigma:12} yields
\begin{eq*}
\hrv{\theta}{v}
=
\max(\rfun{v}(\theta),\hrv{\theta}{x})
=
\max(\rfun{v}(\theta'),\hrv{\theta'}{x})
=
\hrv{\theta'}{v}.
\end{eq*}
It follows that $\phi:A\to B$ is well-defined and $\hrvp$-preserving on $A$.

The function $\phi$ is injective by \refl{IV:phi}.
To see that $\phi:A\to B$ is also surjective let $\theta'\in B$.
By \refl{IV:phi} there exists a unique $\theta\in\ORIENTS(P;b,c)$ with $\phi(\theta)=\theta'$.
Condition~\refi{IV:phi_sigma:23} implies $\hrv{\theta}{y}=\hrv{\theta'}{y}$ as in \refq{IV:phi:hrvy}.
Similarly $\hrv{\theta'}{x}>\hrv{\theta'}{z}$ implies that $\hrv{\theta}{x}=\hrv{\theta'}{x}$.
Thus $\theta\in A$ and the proof is complete.
\end{proof}

We now collect the orientations on which we plan to 
use the map $\phi$ in the proof of \reft{orientations:bounceIII}.
\begin{lem}{IV:id}
The function $\phi$ is a $\hrvpp$-preserving bijection between the sets
\begin{eq}{IV:id}
\ORIENTS(P,211;b,\dedge{y}{z})
&\to
\ORIENTS(Q,211;\dedge{v}{x},b)
\\
\ORIENTS(P,221;b,c)
&\to
\ORIENTS(Q,221;a,b)
\\
\ORIENTS(P,321;b,\dedge{y}{z})
&\to
\ORIENTS(Q,321;\dedge{v}{x},b)
,
\end{eq}
for all $b\in\{\dedge{x}{y},\dedge{y}{x}\}$, and 
all $(a,c)\in\{(\dedge{v}{x},\dedge{y}{z}),(\dedge{x}{v},\dedge{z}{y})\}$, 
where all surjections have domain $\{x,y,z\}$.
Moreover $\phi$ preserves the number of ascents.
\end{lem}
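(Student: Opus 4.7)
The plan is to reduce directly to \refl{IV:phi_sigma}. Since $\hrvp$-preservation implies $\hrvpp$-preservation (the integer partition $\hrvpp$ merely records the block sizes of the set partition $\hrvp$), and since \refl{IV:phi} already establishes that $\phi$ preserves the ascent count on all of $\ORIENTS(P;b,c)$, it suffices to verify Conditions~\refi{IV:phi_sigma:12}, \refi{IV:phi_sigma:13}, and~\refi{IV:phi_sigma:23} of \refl{IV:phi_sigma} for the data $(\sigma,a,b,c)$ associated to each line of \refq{IV:id}.

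For $\sigma=211$ we have $\sigma_1>\sigma_2=\sigma_3$. Condition~\refi{IV:phi_sigma:12} then forces $a=\dedge{v}{x}$, Condition~\refi{IV:phi_sigma:13} holds automatically, and Condition~\refi{IV:phi_sigma:23} forces $c=\dedge{y}{z}$. Together with the premise that $(a,c)\in\{(\dedge{v}{x},\dedge{y}{z}),(\dedge{x}{v},\dedge{z}{y})\}$, this matches precisely the first line. For $\sigma=221$ we have $\sigma_1=\sigma_2>\sigma_3$, so all three conditions hold with no restriction on $(a,c)$, which accounts for the middle line. For $\sigma=321$ we have $\sigma_1>\sigma_2>\sigma_3$; Condition~\refi{IV:phi_sigma:12} again forces $a=\dedge{v}{x}$ (and hence $c=\dedge{y}{z}$), while Conditions~\refi{IV:phi_sigma:13} and~\refi{IV:phi_sigma:23} are automatic. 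This matches the third line.

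There is no real obstacle here: the argument is a short case analysis comparing the order type of $\sigma$ with the hypothesis list of \refl{IV:phi_sigma}. The only subtlety worth flagging is that the three lines of \refq{IV:id} differ exactly in which of the two admissible pairs $(a,c)=(\dedge{v}{x},\dedge{y}{z})$ or $(a,c)=(\dedge{x}{v},\dedge{z}{y})$ survive once $\sigma$ is fixed, and the three-way split above records precisely this.
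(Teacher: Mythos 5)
Your proposal is correct and follows the same route as the paper: the paper's proof likewise consists of observing that the data $(\sigma,a,b,c)$ in each line of \refq{IV:id} satisfies the hypotheses of \refl{IV:phi_sigma} (which gives the $\hrvp$-, hence $\hrvpp$-preserving bijection), with ascent preservation coming from \refl{IV:phi}. Your explicit three-way case check of Conditions~\refi{IV:phi_sigma:12}--\refi{IV:phi_sigma:23} simply spells out what the paper leaves to the reader.
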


\begin{proof}
All cases satisfy the conditions of \refl{IV:phi_sigma}.
The claim on the ascents follows from \refl{IV:phi}.
\end{proof}


Next we define a second map that covers the remaining cases.
Let $\psi:\ORIENTS(P)\to\ORIENTS(Q)$ be the function defined by 
\begin{eq*}
\dedge{x}{i}\in\psi(\theta)
&\Leftrightarrow
\dedge{y}{i}\in\theta
\\
\dedge{y}{i}\in\psi(\theta)
&\Leftrightarrow
\dedge{x}{i}\in\theta
\end{eq*}
for all $i\in\{v+1,\dotsc,x-1\}\cup\{y+1,\dotsc,z-1\}$,
\begin{eq*}
\dedge{v}{x}\in\psi(\theta)
&\Leftrightarrow
\dedge{x}{y}\in\theta,
\\
\dedge{x}{y}\in\psi(\theta)
&\Leftrightarrow
\dedge{y}{z}\in\theta,
\end{eq*}
and
\begin{eq*}
\dedge{i}{j}\in\psi(\theta)
&\Leftrightarrow
\dedge{i}{j}\in\theta\setminus\{\dedge{x}{z},\dedge{z}{y}\}\cup\{\dedge{v}{y},\dedge{y}{z}\}
\end{eq*}
for all other edges.
In words, $\psi(\theta)$ is obtained from $\theta$ by exchanging the rows $x$ and $y$, and the columns $x$ and $y$, except that the edges $\edge{v}{x},\edge{x}{y}$ and $\edge{y}{z}$ (as well as some strict edges) have to be treated separately.

\begin{exa}{IV:psiExample}
As seen in the figure here, the map $\psi$
swaps the marked columns above $x$ and $y$, and the marked rows 
to the left of $x$ and $y$. It is a bijection for all choices of $\alpha,\beta \in \{\to,\dn\}$.
\[
\psi:
 \includegraphics[page=9,scale=0.8,valign=c]{figuresOrientations.pdf}%
\quad\mapsto\quad%
\includegraphics[page=10,scale=0.8,valign=c]{figuresOrientations.pdf}%
\]
\end{exa}

It is immediate from the definition that this map is invertible and preserves the number of ascents.
\begin{lem}{IV:psi}
Let $(a,b)\in\{(\dedge{v}{x},\dedge{x}{y}),(\dedge{x}{v},\dedge{y}{x})\}$ and $(b',c)\in\{(\dedge{x}{y},\dedge{y}{z}),(\dedge{y}{x},\dedge{z}{y})\}$.
Then
\begin{eq*}
\psi:
\ORIENTS(P;b,c)
\to
\ORIENTS(Q;a,b')
\end{eq*}
is a bijection.
Moreover $\asc(\psi(\theta))=\asc(\theta)$ for all $\theta\in\ORIENTS(P)$.
\hfill\qedsymbol
\end{lem}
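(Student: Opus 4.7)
My plan is to verify (a) that $\psi$ maps into $\ORIENTS(Q;a,b')$, (b) that it is bijective, and (c) that $\asc(\psi(\theta))=\asc(\theta)$. The essential observation is that $\Gamma_P$ and $\Gamma_Q$ share all edges except for four adjustments around $v,x,y,z$: the edge $\edge{x}{z}$ is strict in $\Gamma_P$ and absent in $\Gamma_Q$; the edge $\edge{v}{y}$ is absent in $\Gamma_P$ and strict in $\Gamma_Q$; the edge $\edge{v}{x}$ is strict in $\Gamma_P$ but non-strict in $\Gamma_Q$; and the edge $\edge{y}{z}$ is non-strict in $\Gamma_P$ but strict in $\Gamma_Q$. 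The map $\psi$ handles these differences via the ``other edges'' clause (which deletes $\dedge{x}{z}$ and $\dedge{z}{y}$, and inserts $\dedge{v}{y},\dedge{y}{z}$) together with the two special rules for $\edge{v}{x}$ and $\edge{x}{y}$, and it swaps rows and columns $x\leftrightarrow y$ everywhere else.

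For (a), the ``other edges'' clause directly guarantees that the strict edges $\dedge{v}{y}$ and $\dedge{y}{z}$ of $\Gamma_Q$ appear in $\psi(\theta)$ and that $\dedge{x}{z}$ is absent, so $\psi(\theta)\in\ORIENTS(Q)$. The two parameterization rules $\dedge{v}{x}\in\psi(\theta)\iff\dedge{x}{y}\in\theta$ and $\dedge{x}{y}\in\psi(\theta)\iff\dedge{y}{z}\in\theta$ are precisely the links between $(a,b)$ and $(b',c)$ in the statement, so $\psi(\theta)\in\ORIENTS(Q;a,b')$ exactly when $\theta\in\ORIENTS(P;b,c)$. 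For (b), every defining rule is an ``iff'', so an inverse $\psi^{-1}$ is obtained by reading each rule from right to left, and $\psi^{-1}\circ\psi$ is the identity by inspection.

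For (c), I would group the edges into three types and check each in turn. Edges not touching $x$ or $y$ are left alone by $\psi$ (the deletions and insertions concern only strict edges, which contribute nothing to $\asc$), so they contribute identically. Pairs $\{\edge{i}{x},\edge{i}{y}\}$ with $v<i<x$ are interchanged by the swap rule, and since $i<x<y$ both orientations toward the higher index count as ascents, so the pair contribution is invariant; the analogous argument applies to $\{\edge{x}{i},\edge{y}{i}\}$ with $y<i<z$. The three special edges $\edge{v}{x},\edge{x}{y},\edge{y}{z}$ contribute $[\dedge{x}{y}\in\theta]+[\dedge{y}{z}\in\theta]$ to $\asc(\theta)$, since only $\edge{x}{y}$ and $\edge{y}{z}$ are non-strict in $\Gamma_P$, and $[\dedge{v}{x}\in\psi(\theta)]+[\dedge{x}{y}\in\psi(\theta)]$ to $\asc(\psi(\theta))$, since only $\edge{v}{x}$ and $\edge{x}{y}$ are non-strict in $\Gamma_Q$; these two sums agree by the cyclic rules. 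The main obstacle is the careful edge-level bookkeeping at the boundary of the swap region near $v$ and $z$: once one confirms that outside the four explicit modifications the edge sets of $\Gamma_P$ and $\Gamma_Q$ coincide, all remaining checks reduce to the ``iff''-rules defining $\psi$.
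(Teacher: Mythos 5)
Your proposal is correct and amounts to exactly the direct edge-by-edge verification that the paper leaves implicit (the lemma is declared immediate from the definition of $\psi$), including the same identification of the four edges $\edge{v}{x}$, $\edge{v}{y}$, $\edge{x}{z}$, $\edge{y}{z}$ whose presence or strictness differs between $\Gamma_P$ and $\Gamma_Q$, the pairwise-swap argument for the edges in the ranges $\{v+1,\dotsc,x-1\}\cup\{y+1,\dotsc,z-1\}$, and the matching contributions $[\dedge{x}{y}\in\theta]+[\dedge{y}{z}\in\theta]$ on both sides. The only blemish is the parenthetical assertion that the deletions and insertions concern only strict edges --- the deleted $\dedge{z}{y}$ is an orientation of the \emph{non-strict} edge $\edge{y}{z}$ of $\Gamma_P$ --- but this does not affect the argument, since you handle that edge separately among the special edges.
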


If we impose restrictions on $\sigma$ we can say more.

\begin{lem}{IV:psi_sigma}
Let $\sigma:\{x,y,z\}\to[m]$ be a surjection, let $(a,b)\in\{(\dedge{v}{x},\dedge{x}{y}),(\dedge{x}{v},\dedge{y}{x})\}$, and $(b',c)\in\{(\dedge{x}{y},\dedge{y}{z}),(\dedge{y}{x},\dedge{z}{y})\}$, such that
the following conditions are satisfied:
\begin{enumerate}[(i)]
\myi{IV:psi_sigma:b}
$b=\dedge{y}{x}$ or $\sigma_1>\sigma_2$ or $\sigma_2=\sigma_3$.

\myi{IV:psi_sigma:c}
$c=\dedge{z}{y}$ or $\sigma_2>\max(\sigma_1,\sigma_3)$ or $\sigma_1=\sigma_3$.
\end{enumerate}
Then
\begin{eq*}
\psi:
\ORIENTS(P,\sigma;b,c)
\to
\ORIENTS(Q,\sigma_2\sigma_1\sigma_3;a,b')
\end{eq*}
is a $\pi$-switching bijection.
\end{lem}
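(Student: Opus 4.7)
I would mirror the structure of \refl{II:psi_sigma}, adapting to the new setting where the starting pattern is $\dstep\nstep$ (so $P$ carries an extra strict edge $\edge{v}{x}$ and $Q$ carries an extra strict edge $\edge{v}{y}$). Set $A=\ORIENTS(P,\sigma;b,c)$ and $B=\ORIENTS(Q,\sigma_2\sigma_1\sigma_3;a,b')$. The overall strategy is to (1) show that $\psi$ is $\hrvp$-switching on $A$, from which it follows that $\psi(A)\subseteq B$ because transposing $x$ and $y$ in the highest-reachable-vertex function is exactly what turns $\sigma$ into $\sigma_2\sigma_1\sigma_3$, and the prescribed edges $a$ and $b'$ are built into the definition of $\psi$; (2) inherit injectivity from \refl{IV:psi}; and (3) establish surjectivity by reversing the computation.

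To carry out (1), fix $\theta\in A$ and set $\theta'=\psi(\theta)$. For $i\geq z$ and for $w\leq i < x$ the outgoing edges at $i$ are unchanged, so $\hrv{\theta'}{i}=\hrv{\theta}{i}$. The key computation is at the vertices $y$ and $x$: since $\dedge{y}{z}$ is strict in $\theta'$ and the outgoing edges from $y$ into $\{y+1,\dotsc,z-1\}$ in $\theta'$ are precisely the outgoing edges from $x$ in $\theta$ (swapped by $\psi$), I would write
\begin{equation*}
\hrv{\theta'}{y} = \max\bigl(\rfun{y}(\theta'),\hrv{\theta'}{z}\bigr) = \max\bigl(\rfun{x}(\theta),\hrv{\theta}{z}\bigr),
\end{equation*}
and then invoke Condition~(i) to conclude $\hrv{\theta'}{y}=\hrv{\theta}{x}$. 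Symmetrically, $\hrv{\theta'}{x}$ is computed from the outgoing edges of $y$ in $\theta$ together with the $\edge{x}{y}$ edge in $\theta'$, and Condition~(ii) forces $\hrv{\theta'}{x}=\hrv{\theta}{y}$. At the vertex $v$, the strict edge $\dedge{v}{y}$ in $\theta'$ guarantees $\hrv{\theta'}{v}\geq\hrv{\theta'}{y}=\hrv{\theta}{x}$, and the pair $(a,b)$ is chosen precisely so that the orientation of the non-strict edge $\edge{v}{x}$ in $Q$ matches the orientation of $\edge{x}{y}$ in $P$, giving $\hrv{\theta'}{v}=\hrv{\theta}{v}$. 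A downward induction for $i<v$ then yields $\hrv{\theta'}{i}=\hrv{\theta}{i}$.

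For surjectivity I would start from $\theta'\in B$, use \refl{IV:psi} to extract the unique $\theta\in\ORIENTS(P;b,c)$ with $\psi(\theta)=\theta'$, and then run the computations of the previous paragraph in reverse, reading off the equalities $\hrv{\theta}{x}=\hrv{\theta'}{y}$ and $\hrv{\theta}{y}=\hrv{\theta'}{x}$ from the same disjunctive analyses. The main obstacle is the case analysis hidden inside Conditions~(i) and~(ii): one has to check that each of the three disjuncts in each condition handles exactly the obstruction that could cause the above equalities to fail. The novelty compared with \refl{II:psi_sigma} is the extra disjunct $\sigma_1=\sigma_3$ in Condition~(ii), which is forced by the presence of the additional strict edge $\edge{v}{x}$ (respectively $\edge{v}{y}$): when $y$ can reach $z$ only via a detour through $v$ or $x$, the standardization must already identify $\sigma_1$ and $\sigma_3$, and I expect the bookkeeping for this case to be the most delicate part of the proof.
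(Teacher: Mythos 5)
Your overall strategy coincides with the paper's proof: show that $\psi$ is $\hrvp$-switching on $A=\ORIENTS(P,\sigma;b,c)$ (which gives well-definedness into $B$), inherit injectivity from \refl{IV:psi}, and prove surjectivity by pulling back $\theta'\in B$ to the unique preimage $\theta$ provided by \refl{IV:psi} and running the computation in reverse. The max-formulas you write for $\hrv{\theta'}{y}$ and $\hrv{\theta'}{x}$, the case split governed by Conditions~(i) and~(ii), and the separate treatment of the vertex $v$ are exactly the steps of the paper's argument.

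One step is wrong as stated, though repairable. For $w\le i<x$ (i.e.\ $v<i<x$) the outgoing edges at $i$ are \emph{not} unchanged: by definition $\psi$ exchanges the orientations of the edges $\edge{i}{x}$ and $\edge{i}{y}$, so whenever these two edges are oriented oppositely in $\theta$ the ascending edges leaving $i$ do change. The correct argument (the one used in the paper) is that the \emph{set} of highest reachable vertices of the out-neighbours of $i$ is preserved, because $\hrv{\theta'}{x}=\hrv{\theta}{y}$ and $\hrv{\theta'}{y}=\hrv{\theta}{x}$; in particular these intermediate vertices must be handled \emph{after} the computation at $x$ and $y$, not before it as in your ordering. (The same multiset argument, together with $\hrv{\theta}{x}\ge\hrv{\theta}{y}$ in the case $(a,b)=(\dedge{v}{x},\dedge{x}{y})$, is what settles $v$ and then all $i\le v$, rather than just the presence of the strict edge $\dedge{v}{y}$.) Finally, your heuristic for the extra disjunct $\sigma_1=\sigma_3$ in Condition~(ii) --- a ``detour through $v$ or $x$'' --- cannot be literally correct, since descending non-strict edges such as $\dedge{x}{v}$ are never usable for reachability; the disjunct is simply what allows one to replace $\hrv{\theta}{x}$ by $\hrv{\theta}{z}$ in the computation of $\hrv{\theta'}{x}$ when $(b',c)=(\dedge{x}{y},\dedge{y}{z})$. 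None of this changes the architecture of your plan, which matches the paper's proof.
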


\begin{proof}
Let $A=\ORIENTS(P,\sigma;b,c)$ and $B=\ORIENTS(Q,\sigma_2\sigma_1\sigma_3;a,b')$.
We first show that $\psi:A\to\ORIENTS(Q)$ is $\pi$-switching.
By \refl{IV:psi} this  implies that $\psi:A\to B$ is injective and well-defined, 
that is, that $\psi$ really maps $A$ to $B$.
To conclude the proof we then show that $\psi:A\to B$ is also surjective.

Let $\theta\in A$ and set $\theta'=\psi(\theta)$.
Clearly $\hrv{\theta'}{i}=\hrv{\theta}{i}$ for all $i>y$.
Since $\dedge{y}{z}$ is a strict edge in $\theta'$ we have
\begin{eq*}
\hrv{\theta'}{y}
=
\max(\rfun{y}(\theta'),\hrv{\theta'}{z})
=
\max(\rfun{x}(\theta),\hrv{\theta}{z})
.
\end{eq*}
Condition~\refi{IV:psi_sigma:b} implies that the edge $\edge{x}{y}$ is not 
needed to reach $\hrv{\theta}{x}$ from $x$ using only strict and ascending edges in $\theta$.
Therefore we obtain $\hrv{\theta'}{y}=\hrv{\theta}{x}$.
If $(b',c)=(\dedge{y}{x},\dedge{z}{y})$ then
\begin{eq}{IV:psi:hrvx1}
\hrv{\theta'}{x}
=
\rfun{x}(\theta')
=
\rfun{y}(\theta)
=
\hrv{\theta}{y}
.
\end{eq}
On the other hand, if $(b',c)=(\dedge{x}{y},\dedge{y}{z})$ then Condition~\refi{IV:psi_sigma:c} implies
\begin{eq}{IV:psi:hrvx2}
\hrv{\theta'}{x}
&=
\max(\rfun{x}(\theta'),\hrv{\theta'}{y})
\\&
=
\max(\rfun{y}(\theta),\hrv{\theta}{x})
\\&
=
\max(\rfun{y}(\theta),\hrv{\theta}{z})
\\&
=
\hrv{\theta}{y}
.
\end{eq}
For all $i\in\{v+1,\dotsc,x-1\}$ we have
\begin{eq*}
\{\hrv{\theta'}{j}:\dedge{i}{j}\in\theta'\}
=
\{\hrv{\theta}{j}:\dedge{i}{j}\in\theta\}
\end{eq*}
and therefore $\hrv{\theta'}{i}=\hrv{\theta}{i}$.
If $a=\dedge{x}{v}$ then
\begin{eq*}
\{\hrv{\theta'}{j}:\dedge{v}{j}\in\theta'\}
=
\{\hrv{\theta}{j}:\dedge{v}{j}\in\theta\}.
\end{eq*}
On the other hand, if $(a,b)=(\dedge{v}{x},\dedge{x}{y})$ then
\begin{eq*}
\{\hrv{\theta'}{j}:\dedge{v}{j}\in\theta'\}
=
\{\hrv{\theta}{j}:\dedge{v}{j}\in\theta\}
\cup\{\hrv{\theta}{y}\}.
\end{eq*}
Condition~\refi{IV:psi_sigma:b} implies $\hrv{\theta}{x}\geq\hrv{\theta}{y}$ and consequently $\hrv{\theta'}{v}=\hrv{\theta}{v}$.
It now follows that $\hrv{\theta'}{i}=\hrv{\theta}{i}$ also for all $i\in[v]$, and thus $\psi$ is $\hrvp$-switching on $A$ as claimed.

To show that $\psi:A\to B$ is surjective let $\theta'\in B$.
By \refl{IV:psi} there exists a unique $\theta\in\ORIENTS(P;b,c)$ such that $\psi(\theta)=\theta'$.
Clearly $\hrv{\theta}{z}=\hrv{\theta'}{z}$.
If $(b',c)=(\dedge{y}{x},\dedge{z}{y})$ then $\hrv{\theta}{y}=\hrv{\theta'}{x}$ as in \refq{IV:psi:hrvx1}.
If $(b',c)=(\dedge{x}{y},\dedge{y}{z})$ then Condition~\refi{IV:psi_sigma:c} yields 
\begin{eq*}
\hrv{\theta}{y}
&=
\max(\rfun{y}(\theta),\hrv{\theta}{z})
\\&
=
\max(\rfun{x}(\theta'),\hrv{\theta'}{z})
\\&
=
\max(\rfun{x}(\theta'),\hrv{\theta'}{y})
\\&
=
\hrv{\theta'}{x}
.
\end{eq*}
If $b=\dedge{y}{x}$ then
\begin{eq*}
\hrv{\theta}{x}
=
\max(\rfun{x}(\theta),\hrv{\theta}{z})
=
\max(\rfun{y}(\theta'),\hrv{\theta'}{z})
=
\hrv{\theta'}{y}
.
\end{eq*}
Otherwise $b=\dedge{x}{y}$ and Condition~\refi{IV:psi_sigma:b} implies
\begin{eq*}
\hrv{\theta}{x}
=
\max(\rfun{y}(\theta'),\hrv{\theta'}{x},\hrv{\theta'}{z})
=
\hrv{\theta'}{y}.
\end{eq*}
This completes the proof.
\end{proof}

We now collect the orientations on which we plan to use the 
map $\psi$ in the proof of \reft{orientations:bounceIII}.

\begin{lem}{IV:switch}
The function $\psi$ is a $\hrvpp$-preserving bijection between the sets
\begin{eq}{IV:switch}
\ORIENTS(P,111;b,c)
&\to
\ORIENTS(Q,111;a,b')
\\
\ORIENTS(P,211;b,\dedge{z}{y})
&\to
\ORIENTS(Q,121;a,\dedge{y}{x})
\\
\ORIENTS(P,212;b,\dedge{z}{y})
&\to
\ORIENTS(Q,122;a,\dedge{y}{x})
\\
\ORIENTS(P,121;\dedge{y}{x},c)
&\to
\ORIENTS(Q,211;\dedge{x}{v},b')
\\
\ORIENTS(P,312;b,\dedge{z}{y})
&\to
\ORIENTS(Q,132;a,\dedge{y}{x})
\\
\ORIENTS(P,231;\dedge{y}{x},c)
&\to
\ORIENTS(Q,321;\dedge{x}{v},b')
\\
\ORIENTS(P,321;b,\dedge{z}{y})
&\to
\ORIENTS(Q,231;a,\dedge{y}{x})
\end{eq}
for all $(a,b)\in\{(\dedge{v}{x},\dedge{x}{y}),(\dedge{x}{v},\dedge{y}{x})\}$, and all $(b',c)\in\{(\dedge{x}{y},\dedge{y}{z}),(\dedge{y}{x},\dedge{z}{y})\}$, where all surjections have domain $\{x,y,z\}$.
Moreover the map $\psi$ preserves the number of ascents in each case. 
\end{lem}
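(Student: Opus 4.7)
The plan is to reduce \refl{IV:switch} to the two preceding results \refl{IV:psi} and \refl{IV:psi_sigma}. First I would observe that \refl{IV:psi_sigma} provides, whenever Conditions~\refli{IV:psi_sigma}{b} and~\refi{IV:psi_sigma:c} hold, a $\hrvp$-switching bijection
\[
\psi : \ORIENTS(P,\sigma;b,c) \to \ORIENTS(Q,\sigma_2\sigma_1\sigma_3;a,b'),
\]
and that any $\hrvp$-switching map is automatically $\hrvpp$-preserving, since permuting the entries of a set partition does not alter the multiset of block sizes (as already noted in the preamble to \refs{orientations}). Thus it suffices to verify, row by row through \refq{IV:switch}, that the triple $(\sigma,b,c)$ appearing in each domain satisfies both hypotheses of \refl{IV:psi_sigma}, and that the resulting codomain $\ORIENTS(Q,\sigma_2\sigma_1\sigma_3;a,b')$ coincides with the codomain actually written on the right in \refq{IV:switch}. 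The ascent identity $\asc(\psi(\theta))=\asc(\theta)$ will then be inherited directly from \refl{IV:psi}, which holds on all of $\ORIENTS(P)$ with no restriction on $\sigma$, $b$, or $c$.

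For the case analysis itself I would group the seven rows according to which of the two hypotheses is free. The row $\sigma=111$ is trivial since $\sigma_2=\sigma_3$ gives Condition~\refi{IV:psi_sigma:b} and $\sigma_1=\sigma_3$ gives Condition~\refi{IV:psi_sigma:c}. For rows two, three, five and seven the edge $c=\dedge{z}{y}$ is fixed, so Condition~\refi{IV:psi_sigma:c} is automatic; Condition~\refi{IV:psi_sigma:b} is then immediate from $\sigma_2=\sigma_3$ when $\sigma=211$ and from $\sigma_1>\sigma_2$ when $\sigma\in\{212,312,321\}$. Dually, for rows four and six the edge $b=\dedge{y}{x}$ is fixed, making Condition~\refi{IV:psi_sigma:b} automatic, while Condition~\refi{IV:psi_sigma:c} follows from $\sigma_1=\sigma_3$ for $\sigma=121$ and from $\sigma_2>\max(\sigma_1,\sigma_3)$ for $\sigma=231$. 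Checking that the word $\sigma_2\sigma_1\sigma_3$ reproduces the codomain string listed in \refq{IV:switch} is then a direct inspection.

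The only genuine obstacle is bookkeeping: one must take care that the quantified pairings $(a,b)\in\{(\dedge{v}{x},\dedge{x}{y}),(\dedge{x}{v},\dedge{y}{x})\}$ and $(b',c)\in\{(\dedge{x}{y},\dedge{y}{z}),(\dedge{y}{x},\dedge{z}{y})\}$ of \refl{IV:psi_sigma} are interpreted correctly in each row, particularly when one of $b$ or $c$ is already forced. In rows where $b=\dedge{y}{x}$ is forced, the pairing picks out $a=\dedge{x}{v}$ in the codomain, matching rows four and six; in rows where $c=\dedge{z}{y}$ is forced, the pairing picks out $b'=\dedge{y}{x}$, matching rows two, three, five and seven. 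Once these quantifier dependencies are untangled, nothing deeper than the seven short verifications above is needed to complete the proof.
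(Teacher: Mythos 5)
Your proposal is correct and follows essentially the same route as the paper: the paper's proof likewise just observes that the data $\sigma,b,c$ in each domain satisfies the hypotheses of \refl{IV:psi_sigma} (with the codomain being $\ORIENTS(Q,\sigma_2\sigma_1\sigma_3;a,b')$) and that the ascent claim follows from \refl{IV:psi}. Your row-by-row verification and the untangling of the pairings $(a,b)$ and $(b',c)$ simply make explicit the checks the paper leaves to the reader.
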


\begin{proof}
The data $\sigma,b,c$ in each of the domains satisfies the conditions of \refl{IV:psi_sigma}.
The claim on the ascents follows from \refl{IV:psi}.
\end{proof}

The results of this section combine to a proof of the desired 
relation among the symmetric functions $\LLTe_P(\xvec;q+1)$.

\begin{proof}[Proof of \reft{orientations:bounceI}]
Define a function $\Phi:\ORIENTS(P)\to\ORIENTS(Q)$ by letting
\begin{eq*}
\Phi(\theta)
=
\begin{cases}
\phi(\theta)\\
\psi(\theta)
\end{cases}
\end{eq*}
according to Table~\ref{Table:orientations:bounceII}.
The reader should verify that the table covers all orientations in $\ORIENTS(P)$ and $\ORIENTS(Q)$.
Lemmas~\ref{Lemma:IV:id} and~\ref{Lemma:IV:switch} provide 
that $\Phi$ is a $\hrvpp$-preserving bijection that preserves the number of ascents.
The claim follows. 
\end{proof}

\section{Applications and further directions}\label{Section:applications}

In this final section we briefly visit some areas that are related to our results, and collect several consequences and open problems.
We hope that readers familiar with some but not all of these topics can use it as a starting point for learning more.

\subsection{Schur expansions}

It is a major open problem to find an explicit positive 
formula for the Schur expansion of $\LLT_P(\xvec;q)$.
In this subsection we compare a new signed Schur expansion with the previously known signed Schur expansion.
The following description is a straightforward application 
of techniques in \cite{HaglundHaimanLoehr2005}, 
and appears in \cite[Eq.~3.16]{GarsiaHaglundQiuRomero2019}.
Given a permutation $\sigma \in \symS_n$, let the \defin{inverse ascent set} 
$\ASC(\sigma)$ be defined as
\[
 \mathdefin{\ASC(\sigma)} \coloneqq \{ i \in [n-1] : 
 \sigma^{-1}(i) > \sigma^{-1}(i+1) \}.
\]
With $\ASC(\sigma) = \{d_1,d_2,\dotsc,d_\ell\}$ set
\[
\mathdefin{\alpha(\sigma)} \coloneqq (d_1,d_2-d_1,d_3-d_2\dotsc,d_{\ell}-d_{\ell-1},n-d_{\ell}).
\]
Let $P$ be a Schr{\"o}der path of size $n$ and 
let $\mathdefin{\symS_n(P)}$ denote the set of colorings $\sigma$ of $P$ such that 
$(\sigma(1),\sigma(2),\dotsc,\sigma(n))$ is a permutation in $\symS_n$.
The Egge--Loehr--Warrington theorem~\cite{EggeLoehrWarrington2010}
yields the Schur expansion
\begin{eq}{ELG_Schur}
 \LLT_P(\xvec;q) = \sum_{\sigma \in \symS_n(P)}
 q^{\asc(\sigma)}
 \schurS_{\alpha(\sigma)}(\xvec).
\end{eq}
Note that $\alpha(\sigma)$ is a composition in general.
The expression $\schurS_{\alpha(\sigma)}$ should be computed via the Jacobi--Trudi identity and simplifies either to $0$, or a Schur function with a sign.

\begin{exa}{elw}
For $P=\nstep\nstep\dstep\estep\estep$ we have three colorings in ${\symS_n(P)}$
with the inverse ascent sets $\{1,2\}$, $\{2\}$ and $\{1\}$:
\[
\ytableausetup{boxsize=0.8em}
\begin{ytableau}
*(lightgray) \to  &  & *(yellow) 3 \\
  & *(yellow)  2 \\
*(yellow)  1
\end{ytableau}
\,\,
\{1,2\}
\qquad 
\begin{ytableau}
*(lightgray) \to  &  & *(yellow) 2 \\
  & *(yellow)  3 \\
*(yellow)  1
\end{ytableau}
\,\,
\{2\}
\qquad 
\begin{ytableau}
*(lightgray) \to  &  & *(yellow) 3 \\
  & *(yellow)  1 \\
*(yellow)  2
\end{ytableau}
\,\,
\{1\}
.
\]
The corresponding compositions are $(1,1,1)$, $(2,1)$ and $(1,2)$.
The function $\schurS_{12}(\xvec)$ is identically 0, so
\[
 \LLT_P(\xvec;q) = q^2\schurS_{111}(\xvec) + q\schurS_{21}(\xvec).
\]
\end{exa}

The results in this paper yield a new Schur expansion of vertical-strip LLT polynomials. 

\begin{cor}{schurExpansion}
The Schur expansion of $\LLT_P(\xvec;q)$ is given by
\begin{eq}{schurExpansion}
\LLT_P(\xvec;q) =
\sum_{\mu}
\sum_{\theta \in \ORIENTS(P)}
(q-1)^{\asc(\theta)} K_{\mu',\hrvpp(\theta)}
\;
\schurS_\mu(\xvec),
\end{eq}
where $K_{\mu\lambda}$ is a Kostka coefficient.
\end{cor}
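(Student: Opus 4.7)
The plan is to derive this Schur expansion directly from the $e$-positive expansion already established, together with the standard expansion of elementary symmetric functions in the Schur basis.

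First I would invoke \refc{expansion}, which states that
$\LLT_P(\xvec;q+1) = \sum_{\theta \in \ORIENTS(P)} q^{\asc(\theta)} \elementaryE_{\hrvpp(\theta)}(\xvec)$.
This identity is an equality of symmetric functions in $\xvec$ whose coefficients lie in $\setQ(q)$ (in fact, in $\setZ[q]$), so it remains valid after any polynomial substitution in $q$. Setting $q \mapsto q-1$ in this identity yields
\begin{eq*}
\LLT_P(\xvec;q)
= \sum_{\theta \in \ORIENTS(P)} (q-1)^{\asc(\theta)} \elementaryE_{\hrvpp(\theta)}(\xvec).
\end{eq*}

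Next I would expand each $\elementaryE_\lambda$ in the Schur basis using the classical identity
$\elementaryE_\lambda = \sum_\mu K_{\mu'\lambda}\,\schurS_\mu$,
which follows by applying the involution $\omega$ to the well-known expansion $\completeH_\lambda = \sum_\mu K_{\mu\lambda}\, \schurS_\mu$ (see \cite{Macdonald1996,StanleyEC2}) and using $\omega(\completeH_\lambda) = \elementaryE_\lambda$ together with $\omega(\schurS_\mu) = \schurS_{\mu'}$. Substituting this expansion with $\lambda=\hrvpp(\theta)$ into the previous display gives
\begin{eq*}
\LLT_P(\xvec;q)
= \sum_{\theta \in \ORIENTS(P)} (q-1)^{\asc(\theta)} \sum_\mu K_{\mu',\,\hrvpp(\theta)}\, \schurS_\mu(\xvec).
\end{eq*}

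Finally, swapping the order of summation produces the formula in \refq{schurExpansion}. This step is cosmetic since both sums are finite (only partitions $\mu\vdash n$ with $n=|P|$ contribute). There is essentially no obstacle here: once \refc{expansion} is available, the argument is entirely formal. The only point requiring a small amount of care is the direction of conjugation in the Kostka expansion of $\elementaryE_\lambda$, but this is fixed unambiguously by the $\omega$-involution argument above.
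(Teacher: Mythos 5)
Your proposal is correct and matches the paper's (implicit) derivation: apply \refc{expansion} with $q\mapsto q-1$ and then expand each $\elementaryE_{\hrvpp(\theta)}$ in the Schur basis via $\elementaryE_\lambda=\sum_\mu K_{\mu'\lambda}\schurS_\mu$. The handling of the conjugation via the $\omega$-involution and the validity of the substitution in $q$ are exactly the points that make the argument go through, and you address both.
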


\begin{exa}{schurExpansion}
For $P=\nstep\nstep\dstep\estep\estep$ we have four orientations in ${\ORIENTS(P)}$
with $\hrvpp(\theta)$ given by
\begin{eq*}
\ytableausetup{boxsize=0.8em}
\begin{ytableau}
*(lightgray) \to  & \to & *(yellow) 3 \\
 \to & *(yellow)  2 \\
*(yellow)  1
\end{ytableau}
\,\,
3
\qquad 
\begin{ytableau}
*(lightgray) \to  & \to & *(yellow) 3 \\
 \dn & *(yellow)  2 \\
*(yellow)  1
\end{ytableau}
\,\,
3
\qquad 
\begin{ytableau}
*(lightgray) \to  & \dn & *(yellow) 3 \\
 \to & *(yellow)  2 \\
*(yellow)  1
\end{ytableau}
\,\,
21
\qquad 
\begin{ytableau}
*(lightgray) \to  & \dn & *(yellow) 3 \\
 \dn & *(yellow)  2 \\
*(yellow)  1
\end{ytableau}
\,\,
21
.
\end{eq*}
Since $K_{\mu\lambda}=0$ unless $\mu$ dominates $\lambda$, we have
\begin{eq*}
 \LLT_P(\xvec;q) &= 
  \left( 
 (q-1)^2 K_{3,3} + (q-1)^1 K_{3,3} + (q-1)^1 K_{3,21} + (q-1)^0 K_{3,21} 
 \right)\schurS_{111} \\
&\phantom{=}+ 
 \left( (q-1)^1 K_{21,21} + (q-1)^0 K_{21,21} 
 \right)\schurS_{21}. \\
\end{eq*}
Since $K_{3,3}=K_{3,21}=K_{21,21}=1$, we see that this expression matches the previous example.
\end{exa}
One advantage of \refq{schurExpansion} over the formula
in \refq{ELG_Schur} is that is provides a more explicit formula for the coefficient of a specific Schur function.
Moreover, it might be easier to come up with a sign-reversing involution on the objects in \refq{schurExpansion} which solves the Schur-positivity problem.
Finally we note that the Schur coefficients of hook shapes 
in the unicellular LLT polynomials are determined in \cite{HuhNamYoo2020}.

\subsection{Dual bounce relations}

Given $P=s_1 s_2 \dotsc s_{\ell}\in\SCH{}$ define the \defin{reverse of $P$}, 
denoted by $\mathdefin{\rev(P)}$, as the Schr{\"o}der path $\hat{s}_{\ell} \hat{s}_{\ell-1} \dotsc \hat{s}_{1}$ where $\hat{\estep}=\nstep$, $\hat{\nstep}=\estep$ and $\hat{\dstep}=\dstep$.
A quite surprising fact about vertical-strip LLT polynomials is that 
they are invariant under reversal of the Schr{\"o}der path.

\begin{lem}{transposeInvariant}
Let $P \in \SCH{n}$.
Then
\begin{eq}{transposeInvariant}
 \LLT_P(\xvec;q) = \LLT_{\rev(P)}(\xvec;q).
\end{eq}
\end{lem}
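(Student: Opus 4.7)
My plan is to prove~\refq{transposeInvariant} by a direct bijection on colourings, combined with the variable symmetry of $\LLT_{\rev(P)}$. Geometrically, the operation $\rev$ is the anti-diagonal reflection $(a,b)\mapsto(n-b,n-a)$ acting on lattice paths: diagonal steps of $P$ map to diagonal steps of $\rev(P)$, and the cells strictly between $P$ and the main diagonal map bijectively onto the cells strictly between $\rev(P)$ and the main diagonal. Consequently the vertex relabelling $\phi\colon i\mapsto n+1-i$ is an isomorphism of decorated unit-interval graphs $\Gamma_P\to\Gamma_{\rev(P)}$, sending each edge $\{i,j\}$ with $i<j$ to the edge $\{n+1-j,n+1-i\}$ and preserving strict/non-strict labels.

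Fix an integer $N\ge 1$ and restrict attention to colourings $\kappa\colon[n]\to[N]$. For such a colouring of $\Gamma_P$ define
\[
 \kappa^{\ast}(j)\coloneqq N+1-\kappa(n+1-j),\qquad j\in[n].
\]
The assignment $\kappa\mapsto\kappa^{\ast}$ is manifestly an involution. The crux of the argument is that $\phi$ reverses the vertex order while $c\mapsto N+1-c$ reverses the colour order, and these two reversals cancel on every edge: for $\{i,j\}\in E(\Gamma_P)$ with $i<j$, a direct substitution gives
\[
 \kappa(i)<\kappa(j)\iff \kappa^{\ast}(n+1-j)<\kappa^{\ast}(n+1-i).
\]
Applied to strict edges this forces $\kappa^{\ast}$ to satisfy the strict inequalities of $\Gamma_{\rev(P)}$, hence to be a valid colouring; applied to non-strict edges it yields $\asc(\kappa^{\ast})=\asc(\kappa)$. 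Thus $\kappa\mapsto\kappa^{\ast}$ is an ascent-preserving bijection between the valid $[N]$-valued colourings of $\Gamma_P$ and of $\Gamma_{\rev(P)}$.

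Summing the weighted bijection and re-indexing by $j=n+1-i$,
\[
 \LLT_P(x_1,\dotsc,x_N;q)
 =\sum_{\kappa^{\ast}}q^{\asc(\kappa^{\ast})}\prod_{j=1}^{n}x_{N+1-\kappa^{\ast}(j)}
 =\LLT_{\rev(P)}(x_N,x_{N-1},\dotsc,x_1;q).
\]
Because $\LLT_{\rev(P)}$ is a symmetric function, the right-hand side equals $\LLT_{\rev(P)}(x_1,\dotsc,x_N;q)$. As this polynomial identity holds for every $N\ge 1$, \refq{transposeInvariant} follows as an identity in $\Lambda$. The only genuinely nontrivial input is the geometric identification of $\rev$ as the anti-diagonal reflection of lattice paths, from which both the graph isomorphism $\phi$ and its compatibility with strict edges follow directly.
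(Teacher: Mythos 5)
Your proof is correct and follows essentially the same route as the paper: the paper also uses the color-and-vertex reversing involution $\kappa'(j)=n+1-\kappa(n+1-j)$ (in $n$ variables rather than your arbitrary $N$) together with the symmetry of the LLT polynomial to conclude. The extra details you supply — the anti-diagonal reflection and the explicit isomorphism $\Gamma_P\cong\Gamma_{\rev(P)}$ — are just the verifications the paper leaves implicit.
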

Similar observations have been made before, for example in~\cite[Prop.~3.3]{CarlssonMellit2017}.

\begin{proof}
Since the LLT polynomials $\LLT_P$ and $\LLT_{\rev(P)}$ are homogeneous symmetric functions of degree $n$, 
it is enough to show that 
\begin{eq}{transposeEquiv}
 \LLT_P(x_1,x_2,\dotsc,x_n;q) = \LLT_{\rev(P)}(x_n,\dotsc,x_2,x_1;q).
\end{eq}
Given a coloring $\kappa$ of $P$, define a coloring $\kappa'$ of $\rev(P)$ via $\kappa'(j) \coloneqq n+1 - \kappa(n+1-j)$ for all $j \in [n]$.
Then $\asc(\kappa)=\asc(\kappa')$.
By definition of vertical-strip LLT polynomials this implies \refq{transposeEquiv}.
\end{proof}

As a corollary, vertical-strip LLT polynomials satisfy the \defin{dual bounce relations}.
These are the relations obtained from the bounce relations in \reft{LLT} by reversing all paths.

\begin{cor}{dual:bounce}
The function $F:\SCH{}\to\Lambda$ defined by $P\mapsto\LLT_P\xqvar$ satisfies the relations obtained from the bounce relations in \reft{LLT} by reversing all paths.
Moreover this function is uniquely determined by the conditions in~\refti{recursion}{e}--\refi{recursion:unicellular} together with the dual bounce relations obtained by reversing all paths in \refti{recursion}{bounce} (or, alternatively, in \refpi{equivalentRelations}{B}).
\end{cor}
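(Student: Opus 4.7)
\medskip

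The plan is to derive both assertions of \refc{dual:bounce} from \refl{transposeInvariant} together with the results already established. Since reversal is an involution on $\SCH{}$ which acts as $\rev(\nstep)=\estep$, $\rev(\estep)=\nstep$, $\rev(\dstep)=\dstep$ and reverses the order of steps, it swaps every pattern of the form $s\bs t V\dstep\bs\estep W$ appearing in \reft{LLT} with a pattern $\rev(W)\nstep\bs\dstep\rev(V)\rev(t)\bs\rev(s)$. Thus the ``dual bounce relations'' are obtained from \reft{LLT} by textually reversing every Schr\"oder path on both sides.

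First I would verify that $P\mapsto\LLT_P\xqvar$ satisfies the dual bounce relations. Given a Schr\"oder path $P'$ to which a dual bounce relation applies, the reverse $P\coloneqq\rev(P')$ admits an ordinary bounce decomposition as in \reft{LLT}. Applying \reft{LLT} to $P$ yields $\LLT_P\xqvar=\sum c_i\LLT_{Q_i}\xqvar$, where the $Q_i$ are the paths produced by the local moves of the bounce relation. Invoking \refl{transposeInvariant} on every term gives $\LLT_{P'}\xqvar=\LLT_{\rev(P)}\xqvar=\sum c_i\LLT_{\rev(Q_i)}\xqvar$, and the paths $\rev(Q_i)$ are precisely those obtained from $P'$ by the reversed local moves. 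This establishes the dual bounce relations.

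For the uniqueness claim, suppose $F:\SCH{}\to\Lambda$ satisfies the initial condition \refti{recursion}{e}, the multiplicativity \refi{recursion:mult}, the unicellular relation \refi{recursion:unicellular}, and the dual bounce relations. Define $\tilde F_P\coloneqq F_{\rev(P)}$; I would then verify that $\tilde F$ satisfies the four conditions of \reft{recursion}. The initial condition is immediate because $\nstep\dstep^k\estep$ is reversal-invariant. Multiplicativity follows from $\rev(PQ)=\rev(Q)\rev(P)$ combined with commutativity in $\Lambda$. The unicellular relation is preserved because the local patterns $\nstep\estep$, $\estep\nstep$ and $\dstep$ are each reversal-invariant (so reversing the relation for $F$ yields the same relation for $\tilde F$, with $U$ and $V$ interchanged and reversed). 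Finally, the bounce relations \refi{recursion:bounce} hold for $\tilde F$ precisely because their reverses are the dual bounce relations, which $F$ satisfies by assumption. The uniqueness in \reft{recursion} then forces $\tilde F$, and hence $F$, to be unique.

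The argument is essentially bookkeeping once \refl{transposeInvariant} is in hand; the one mildly subtle point to be careful about is checking that each local pattern appearing in \reft{recursion} and its dual version really do correspond under reversal (in particular that $\rev$ of the bounce decomposition of $P$ at $(x,z)$ becomes the ``co-bounce'' decomposition of $\rev(P)$ at the mirrored point), so that the relations transfer term by term rather than only in aggregate. Once this correspondence is spelled out, no additional combinatorial input beyond \refl{transposeInvariant} and \reft{recursion} is needed.
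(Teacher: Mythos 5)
Your argument is correct and is exactly the (implicit) argument the paper has in mind: the corollary is stated as an immediate consequence of \refl{transposeInvariant}, obtained by transferring the relations of \reft{LLT} along reversal and pulling back a hypothetical solution $F$ via $\tilde F_P \coloneqq F_{\rev(P)}$ to invoke the uniqueness in \reft{recursion}. Your extra care about the reversed bounce decompositions is fine but not strictly needed, since the dual relations are by definition the textual reversals of the original ones.
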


\begin{cor}{dual:bounce:dyck}
The function $F:\DYCK{}\to\Lambda$ defined by $P\mapsto\LLT_P\xqvar$ is uniquely determined by the conditions in~\refti{dyckRecursion}{e} and \refi{dyckRecursion:mult} together with the dual bounce relations obtained by reversing all paths in \refti{dyckRecursion}{bounce}.
\end{cor}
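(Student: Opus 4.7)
The plan is to deduce \refc{dual:bounce:dyck} from the uniqueness statement of \reft{dyckRecursion} by conjugating with the path-reversal involution. Suppose $F:\DYCK{}\to\Lambda$ satisfies \refti{dyckRecursion}{e}, \refi{dyckRecursion:mult}, and the dual bounce relations (those obtained from \refi{dyckRecursion:bounce} by reversing every Schr\"oder path appearing in them). Define $\tilde F:\DYCK{}\to\Lambda$ by $\tilde F_P \coloneqq F_{\rev(P)}$. I will verify that $\tilde F$ satisfies the hypotheses of \reft{dyckRecursion}; then, by that theorem, $\tilde F_P = \LLT_P$ for every Dyck path $P$, and hence
\begin{eq*}
F_P = \tilde F_{\rev(P)} = \LLT_{\rev(P)} = \LLT_P
\end{eq*}
by \refl{transposeInvariant}. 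Existence of such an $F$ is immediate: the assignment $P\mapsto\LLT_P$ satisfies the dual relations by combining \reft{LLT} with \refl{transposeInvariant}.

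The verification that $\tilde F$ satisfies the three conditions of \reft{dyckRecursion} reduces to routine checks. A direct computation shows that $\rev(\nstep(\nstep\estep)^{k}\estep) = \nstep(\nstep\estep)^{k}\estep$ for every $k\in\setN$, so the initial condition transfers verbatim from $F$ to $\tilde F$. Multiplicativity is preserved because $\rev(PQ) = \rev(Q)\rev(P)$ and $\Lambda$ is commutative, giving $\tilde F_{PQ} = F_{\rev(Q)}F_{\rev(P)} = \tilde F_P \tilde F_Q$. For the bounce relations, the key is to observe that the bounce decomposition $P = U\nstep\bs\nstep V\nstep\estep\bs\estep W$ reverses to a bounce decomposition of $\rev(P)$ in which the roles of the starting point and the endpoint of the bounce path are swapped; applying $\rev$ term-by-term to \refq{dyckBounceI} at a path $P$ then yields exactly the dual three-term relation at $\rev(P)$, which $F$ satisfies by assumption. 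By definition of $\tilde F$ this is equivalent to the original bounce relation for $\tilde F$ at $P$. The analogous argument, carried out for the six terms appearing in \refq{dyckBounceII}, yields the remaining family of relations.

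The only non-trivial obstacle is purely combinatorial bookkeeping: verifying that the six words on the two sides of \refq{dyckBounceII} are permuted among themselves by hat-reversal, so that the six-term relation really translates into its own dual. Once this finite correspondence between the local patterns at the start and the endpoint of the bounce path is tabulated, all hypotheses of \reft{dyckRecursion} hold for $\tilde F$, and the corollary follows at once.
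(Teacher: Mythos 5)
Your proposal is correct and is exactly the argument the paper intends (it states the corollary without proof): conjugate by path reversal, setting $\tilde F_P = F_{\rev(P)}$, check the self-dual initial condition and multiplicativity, invoke the uniqueness in \reft{dyckRecursion}, and finish with \refl{transposeInvariant}. The only remark is that your final "bookkeeping" step is not actually needed: since the dual relations are by definition the relations of \refti{dyckRecursion}{bounce} with every path reversed but still indexed by the same pairs $(P,(x,z))$, the assumption that $F$ satisfies them is literally the statement that $\tilde F$ satisfies the original bounce relations, with no need to re-express them as bounce relations for $\rev(P)$ or to track how the six patterns in \refq{dyckBounceII} are permuted.
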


Note that some sets of relations we have encountered in this paper are \defin{self-dual}, that is, invariant under reversal of all underlying paths.
Examples are the unicellular relation and the second type of bounce relations in \reft{LLT} --- the case $st=\dstep\nstep$.
Moreover the initial conditions in \refti{recursion}{e} and \refti{dyckRecursion}{e} are self-dual.
However \refl{transposeInvariant} is not manifestly evident, neither from the characterization of vertical-strip LLT poylnomials in \reft{recursion}, nor from the $\elementaryE$-expansion in \refc{expansion}.

\begin{prob}{dual_orientations}
Give a combinatorial explanation of the identity
\begin{eq}{dual_orientations}
\sum_{\theta \in \ORIENTS(P)} q^{\asc(\theta)}
\elementaryE_{\hrvpp(\theta)}(\xvec)
=
\sum_{\theta' \in \ORIENTS(\rev(P))} q^{\asc(\theta')}
\elementaryE_{\hrvpp(\theta')}(\xvec).
\end{eq} 
\end{prob}

\begin{prob}{dual_relations}
Deduce the dual bounce relations directly from the bounce relations (in the style of the proof of \refp{equivalentRelations}).
\end{prob}

Moreover it is an interesting question if there are other sets of relations that uniquely determine the vertical-strip LLT polynomials.

\begin{prob}{bounce+dual}
Combining bounce relations and dual bounce relations,
can we find other results in the style of \reft{recursion}?
\end{prob}
An answer to this question can be found in the recent \cite{AbreuNigro2020ax},
where the Dyck relation and the dual is shown to uniquely define the 
chromatic quasisymmetric functions.

\subsection{Hall--Littlewood polynomials}

For a Dyck path $P$ it is noted in \cite{AlexanderssonPanova2016}
that $\LLT_{P}(\xvec;q) = \LLT_{\rev(P)}(\xvec;q)$
and that
\begin{eq}{omegaRel}
\omega \LLT_{P}(\xvec;q) = q^{\area(P)}\LLT_{P}(\xvec;q^{-1}),
\end{eq}
and that a similar relation holds for general Schr{\"o}der paths,
where vertical-strip LLT polynomials are mapped to horizontal-strip LLT polynomials.
Hence, there is a formula analogous to \refq{conjFormula} 
for the horizontal-strip LLT polynomials.
As a special case, we can say something about 
the transformed Hall--Littlewood polynomials.
The \defin{transformed Hall--Littlewood polynomial} $\hallLittlewoodH_\mu(\xvec;q)$
may be defined as follows (see \cite[Eq.~(2)+(11)]{TudoseZabrocki2003} for a reference).
For $\lambda \vdash n$ let
\begin{eq}{hallLittlewoodHDef}
 \mathdefin{\hallLittlewoodH_\lambda(\xvec;q)} \coloneqq \prod_{1\leq i<j \leq n} \frac{1 - R_{ij}}{1 - qR_{ij}} \completeH_{\lambda}(\xvec)
\end{eq}
where the $R_{ij}$ are \defin{raising operators} acting on the 
partitions (or compositions) indexing the complete homogeneous symmetric functions
by
\begin{eq*}
\mathdefin{R_{ij}} \completeH_{(\lambda_1,\dotsc,\lambda_n)}(\xvec) \coloneqq 
\completeH_{(\lambda_1,\dotsc,\lambda_i +1, \dotsc, \lambda_j-1,\dotsc,\lambda_n)}(\xvec)
.
\end{eq*}
The transformed Hall--Littlewood polynomials 
can be expressed (see for example~\cite[Prop.~38]{Alexandersson2019llt}) 
via a vertical-strip LLT polynomial as
\[
 \hallLittlewoodH_{\mu'}(\xvec;q) = q^{-\sum_{i\geq 2} \binom{\mu_i}{2} } \omega \LLT_{P_\mu}(\xvec;q) 
\]
where 
\[
\mathdefin{P_\mu} \coloneqq 
(\nstep^{\mu_1})
(\estep^{\mu_1-\mu_2})
(\dstep^{\mu_2})
(\estep^{\mu_2-\mu_3})
(\dstep^{\mu_3}) 
\dotsm 
(\estep^{\mu_{\ell-1}-\mu_\ell})
(\dstep^{\mu_\ell}) 
(\estep^{\mu_\ell}).
\]
Hence, our main result implies that
\begin{eq}{newHLPolyFormula}
 \hallLittlewoodH_{\mu'}(\xvec;q+1) = (q+1)^{-\sum_{i\geq 2} \binom{\mu_i}{2} }
 \sum_{\theta \in \ORIENTS(P_\mu)} q^{\asc(\theta)} \completeH_{\hrvpp(\theta)}(\xvec).
\end{eq}
It would be interesting to see if 
there is a direct combinatorial proof of this identity connecting 
Kostka--Foulkes polynomials and ascents. 
Moreover, perhaps it is possible to interpret the raising operators 
used to define the transformed Hall--Littlewood polynomials
on the level of orientations.

\subsection{Diagonal harmonics}

We shall briefly sketch some consequences in diagonal harmonics.
For more background on this topic, we refer to \cite{Bergeron2009,qtCatalanBook}.

Let us first recall the definition of the operators $\nabla$ and $\Delta'_f$.
They act on the space $\Lambda(q,t)$ of symmetric functions over $\setQ(q,t)$, and 
the \defin{modified Macdonald polynomials} are eigenvectors.
First set $\mathdefin{B(\mu)} \coloneqq \sum_{(i,j)\in \mu} (q^{i-1} t^{j-1})$,
where the sum ranges over all cells in the Young diagram of $\mu$.
We then define the linear operators $\nabla$ and $\Delta'_f$ by
\begin{eq}{nablaDef}
 \mathdefin{\nabla} \macdonaldH_\mu(\xvec;q,t) &\coloneqq 
 \elementaryE_n[B(\mu)] \cdot  \macdonaldH_\mu(\xvec;q,t), 
 \\
 \mathdefin{\Delta'_f} \macdonaldH_\mu(\xvec;q,t) &\coloneqq 
 f[B(\mu)-1] \cdot  \macdonaldH_\mu(\xvec;q,t), 
\end{eq}
where we use plethystic substitution.
For example, 
\[
\elementaryE_2[B(2,1)] = \elementaryE_2[1+q+t] = \elementaryE_2(1,q,t)=q+t+qt.
\]

Let $\mathdefin{\mathcal{F}_n(\xvec;q,t)} \coloneqq \nabla \elementaryE_n$.
M.~Haiman showed \cite{Haiman1994} that $\mathcal{F}_n(\xvec;q,t)$
is the bigraded Frobenius series of the space of diagonal coinvariants in two sets of $n$ variables where $\symS_n$ act diagonally. 
A great deal of research has been devoted to give combinatorial interpretations of 
$\nabla f$, for various choices of symmetric function $f$.

In \cite{HaglundHaimanLoehrRemmelUlyanov2005}, the authors presented a conjectured
combinatorial formula for $\mathcal{F}_n(\xvec;q,t)$, the \defin{Shuffle Conjecture}.
This conjecture was later refined in \cite{HaglundMorseZabrocki2012},
where the \defin{Compositional Shuffle Conjecture} was stated.
This refined conjecture was proved by E.~Carlsson and A.~Mellit~\cite[Prop.~2.4]{CarlssonMellit2017}.
\begin{thm}{shuffleTheorem}[The Shuffle Theorem]
 We have that
\begin{eq}{shuffleThm}
 \mathcal{F}_n(\xvec;q,t) = \sum_{w \in \mathrm{WPF}(n)} q^{\area(w)} t^{\dinv(w)} \xvec_w,
\end{eq}
where the sum is over all \emph{word parking functions} of size $n$.
\end{thm}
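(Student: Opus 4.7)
The plan is to adapt the Carlsson--Mellit strategy by reducing the identity to a compositional statement that can be verified via the recursive characterization of vertical-strip LLT polynomials in \reft{recursion}. First I would rewrite the right-hand side by grouping word parking functions according to their underlying Dyck path $D$: the labels form a coloring of $\uig_D$, and Haglund's treatment of $\dinv$ after the zeta map identifies this statistic with the ascent statistic $\asc(\kappa)$ from \refq{lltDefinition}. The $\area$ statistic depends only on $D$, and using \refq{omegaRel} to handle the $\omega$-duality of $(q,t)$-statistics this recasts the right-hand side as
\begin{eq*}
\sum_{w\in\mathrm{WPF}(n)} q^{\area(w)} t^{\dinv(w)} \xvec_w \;=\; \sum_{D\in\DYCK{n}} t^{\area(D)} \LLT_D(\xvec;q),
\end{eq*}
so the theorem becomes an identity expressing $\nabla\elementaryE_n$ as a $t^{\area}$-weighted sum of unicellular LLT polynomials.

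Second, I would refine this identity compositionally to a statement indexed by all Schr\"oder paths $P \in \SCH{n}$; the right-hand side extends naturally to $t^{\area(P)}\LLT_P(\xvec;q)$, and the left-hand side is obtained by applying the Haglund--Haiman--Loehr formula to the Schur expansion of $\elementaryE_n$, combining it with the eigenvalue formula \refq{nablaDef}, and organising the resulting plethystic sum according to the underlying vertical-strip shape. The characterization in \reft{recursion} then reduces the proof to checking the initial condition, multiplicativity, the unicellular relation, and the bounce relations on each side. For the right-hand side these all follow from \reft{LLT}. For the $\nabla$-side the initial condition at $P=\nstep\dstep^{k}\estep$ and multiplicativity reduce to direct eigenvalue computations, and the unicellular relation follows from the single commutation $[d_{-},d_{+}] = (q-1)\,\mathrm{id}$ in the Dyck path algebra.

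The main obstacle is verifying the bounce relations \refti{recursion}{bounce} on the $\nabla$-side. These correspond to compositions of commutation identities among the operators $d_{+}$, $d_{-}$ and their conjugates in the Dyck path algebra, and in the Carlsson--Mellit proof they are the deepest technical input: essentially a pentagon/hexagon-type family of relations verified by direct plethystic manipulations. While the characterization in \reft{recursion} neatly packages the combinatorial side and reduces the number of identities one needs to check to a short finite list, it does not appear to bypass the operator calculus where the real work still takes place; one would still need to show that each bounce move on a Schr\"oder path is realised by a specific commutator identity in the Dyck path algebra acting on $\nabla\elementaryE_n$, which is essentially equivalent to establishing the full algebra of relations that Carlsson--Mellit construct from scratch.
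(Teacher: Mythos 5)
There is a genuine gap here, and it is worth being clear about the status of this statement in the paper: the Shuffle Theorem is not proved in this paper at all. It is imported as an external result of E.~Carlsson and A.~Mellit \cite[Prop.~2.4]{CarlssonMellit2017}, and everything in \refs{applications} that touches $\nabla\elementaryE_n$ (in particular \refq{shuffleThmLLT} and \refc{shuffleCor}) takes that theorem as input. Your proposal runs in the opposite direction: it tries to use the machinery of this paper to reprove the theorem, and it stalls exactly where one would expect. The two essential steps are left unsupplied. First, the ``compositional refinement'' of the left-hand side is not constructed: you assert that $\nabla\elementaryE_n$ can be organised into a family of symmetric functions indexed by Schr\"oder paths by manipulating the Haglund--Haiman--Loehr formula, but no such family $P\mapsto F_P$ is defined, and without it the characterization in \reft{recursion} (which characterizes a \emph{function on} $\SCH{}$, not a single symmetric function) has nothing to act on. In Carlsson--Mellit the refinement is the compositional statement built from the $d_+,d_-$ operators of the Dyck path algebra, and producing it is a substantial part of their proof. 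Second, you concede that verifying the bounce relations \refti{recursion}{bounce} for that hypothetical family amounts to establishing the full system of operator relations of \cite{CarlssonMellit2017}; since that is precisely the core of the original proof, the proposal reduces the Shuffle Theorem to the Shuffle Theorem's own hardest ingredient rather than proving it.

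There is also a technical slip in the first reduction. Grouping word parking functions by their underlying Dyck path and translating $\dinv$ into the coloring/ascent model of \refq{lltDefinition} does \emph{not} yield $\sum_{D\in\DYCK{n}} t^{\area(D)}\LLT_D(\xvec;q)$ with unicellular LLT polynomials: after the zeta map the correct statement is \refq{shuffleThmLLT}, a $t^{\bounce(P)}$-weighted sum of \emph{vertical-strip} LLT polynomials $\LLT_{P^*}$ indexed by Schr\"oder paths obtained by turning corners of $P$ into diagonal steps; moreover with the weights $q^{\area(w)}t^{\dinv(w)}$ as stated, the path statistic attaches to $q$ and the label statistic to $t$, so invoking \refq{omegaRel} to ``swap'' them is not a substitute for the $(q,t)$-symmetry of $\mathcal{F}_n$, which itself requires an argument. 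None of this is fatal to the general plan, but together with the missing compositional construction and the unverified bounce relations on the $\nabla$-side, the proposal is an outline of how one might re-derive Carlsson--Mellit, not a proof of the theorem.
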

By applying the $\zeta$-map of J.~Haglund \cite{qtCatalanBook}
and interpreting the result, we obtain the equivalent statement 
now using vertical-strip LLT polynomials as they are defined in this paper. We have that
\begin{eq}{shuffleThmLLT}
 \mathcal{F}_n(\xvec;q,t) = \sum_{P \in \DYCK{n}} t^{\bounce(P)} \LLT_{P^*}(\xvec;q),
\end{eq}
where $P^*$ is the Schr{\"o}der path obtained from the Dyck path $P$
by replacing every instance of a pattern $\estep\nstep$ by a diagonal step $\dstep$.
By applying \refc{expansion} we obtain a new combinatorial formula for 
$\mathcal{F}_n(\xvec;q+1,t)$.

\begin{cor}{shuffleCor}
We have that 
\begin{eq*}
\mathcal{F}_n(\xvec;q+1,t)
=
\sum_{P \in \DYCK{n}} t^{\bounce(P)}
\sum_{\theta \in \ORIENTS(P^*)}
q^{\asc(\theta)} \elementaryE_{\hrvpp(\theta)}(\xvec)
.
\end{eq*}
\end{cor}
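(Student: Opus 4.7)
The plan is to combine two ingredients that are already established in the paper. The first ingredient is the vertical-strip LLT reformulation of the Shuffle Theorem displayed in \refq{shuffleThmLLT}, namely
\begin{equation*}
\mathcal{F}_n(\xvec;q,t) = \sum_{P \in \DYCK{n}} t^{\bounce(P)} \LLT_{P^*}(\xvec;q),
\end{equation*}
which is proved via the $\zeta$-map applied to the parking-function side of \reft{shuffleTheorem}. This identity holds as an equality of symmetric functions with coefficients in $\setQ(q,t)$, so it remains valid under the substitution $q\mapsto q+1$.

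After this substitution the left-hand side becomes $\mathcal{F}_n(\xvec;q+1,t)$, and the right-hand side becomes
\begin{equation*}
\sum_{P\in\DYCK{n}} t^{\bounce(P)} \LLT_{P^*}(\xvec;q+1).
\end{equation*}
The second ingredient is \refc{expansion}, which gives the explicit $\elementaryE$-expansion
\begin{equation*}
\LLT_{P^*}(\xvec;q+1) = \sum_{\theta \in \ORIENTS(P^*)} q^{\asc(\theta)} \elementaryE_{\hrvpp(\theta)}(\xvec)
\end{equation*}
for each Schr\"oder path $P^*$ obtained from $P\in\DYCK{n}$ by converting every $\estep\nstep$ corner into a diagonal step $\dstep$. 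Substituting this expansion inside the sum over $P\in\DYCK{n}$ yields the claimed formula.

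There is no real obstacle here, since both inputs are already in place: the Shuffle Theorem in its LLT form is a known consequence of \cite{CarlssonMellit2017} combined with the $\zeta$-map, and the $\elementaryE$-positive expansion at $q+1$ is precisely the main result \refc{expansion} of this paper. The only thing to verify is that the substitution $q\mapsto q+1$ is legitimate and that the coefficients $t^{\bounce(P)}$ play no role in the substitution, both of which are immediate since $\bounce(P)$ depends only on the Dyck path $P$ and the identity \refq{shuffleThmLLT} is a polynomial identity in $q$. Hence the corollary follows by simply plugging the $\elementaryE$-expansion into the Shuffle Theorem.
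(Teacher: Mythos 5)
Your proposal is correct and follows exactly the paper's route: the paper also obtains the corollary by substituting $q\mapsto q+1$ in the LLT form of the Shuffle Theorem \refq{shuffleThmLLT} and then inserting the $\elementaryE$-expansion of $\LLT_{P^*}(\xvec;q+1)$ from \refc{expansion}. Your remark that $t^{\bounce(P)}$ is unaffected by the substitution is a harmless extra verification that the paper leaves implicit.
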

We also note that vertical-strip LLT polynomials appear in a similar manner
in the rational shuffle conjecture \cite{BergeronGarsiaLevenXin2015}
and its subsequent proof \cite{Mellit2016x}.
Similar observations can be used to show that the conjectured
expressions for $ \Delta'_{\elementaryE_k} \elementaryE_n$
appearing in the \defin{Delta Conjecture} by J.~Haglund, J.~Remmel and A.~Wilson~\cite{HaglundRemmelWilson2018} are also $\elementaryE$-positive after the substitution $q \mapsto q+1$.

We also note that the expression in \cite[Thm. 3.2.6]{BergeronCeballosPilaud2018x},
where we have $r$ sets of variables (compared to the one or two sets of variables in
the main literature of diagonal harmonics). In particular,
an open question, \cite[Conj. 3.2.17]{BergeronCeballosPilaud2018x}, 
seem to indicate a close connection with vertical-strip LLT polynomials,
due to an $\elementaryE$-positivity phenomenon.

Another interesting result is the combinatorial interpretation of 
$(-1)^{n-1} \nabla \powerSum_n$.  The \defin{Square Path Conjecture} 
was formulated by N.~Loehr and G.~Warrington~\cite{LoehrWarrington2007}.
Later E.~Sergel~\cite{Sergel2017} gave a proof of the Square Path Conjecture by using the Compositional Shuffle Theorem.
\begin{thm}{squarePathsThm}[E.~Sergel, (2017)]
We have the expansion into fundamental quasisymmetric functions
\begin{eq*}
(-1)^{n-1} \nabla \powerSum_n
=
\sum_{w \in \mathrm{Pref}(n)} 
t^{\area(w)} q^{\dinv(w)}
\gessel_{\mathrm{IDES}(\sigma(w))}(\xvec)
,
\end{eq*}
where the sum is taken over all \defin{preference functions} of size $n$,
and $\mathdefin{\sigma(w)}$ is the \defin{reading word} of $w$,
and $\mathdefin{\dinv}$ is a statistic closely related to 
the statistic $\asc$ on colorings.
\end{thm}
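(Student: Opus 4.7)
The plan is to deduce \reft{squarePathsThm} from the Compositional Shuffle Theorem (available to us as \reft{shuffleTheorem}), following the strategy of Sergel~\cite{Sergel2017}. The basic idea is to reduce the computation of $(-1)^{n-1} \nabla \powerSum_n$ to $\nabla$ applied to products $\elementaryE_\lambda$ via a classical symmetric-function identity, then use the parking-function expansion furnished by the Shuffle Theorem together with a bijection between parking-function data on Dyck paths and preference functions on square paths.

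First I would expand $(-1)^{n-1} \powerSum_n$ as a signed $\setZ$-linear combination of products of elementary symmetric functions via Newton's identity, and then apply $\nabla$ termwise. For each $\elementaryE_\lambda$-summand one invokes a compositional refinement of \reft{shuffleTheorem}, obtaining a signed sum over parking functions on Dyck paths with prescribed composition (or "touch") data. The combinatorial core of the argument is then a decomposition of preference functions: the underlying square path has a leftmost lowest point, and cutting at this point and rotating the two pieces produces a Dyck path together with a distinguished touch point together with a cyclic rotation of the labels. This decomposition should match the compositional Shuffle data so that, after regrouping, the alternating signs coming from Newton's identity cancel down to exactly one contribution per preference function. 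Finally one checks that $\area$ and $\dinv$ on preference functions transport correctly to the Dyck-path statistics, and that the descent set of the reading word $\sigma(w)$ controls the $\gessel_{\mathrm{IDES}(\sigma(w))}$ expansion.

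The main obstacle will be the sign-reversing cancellation and the identification of the surviving terms with preference functions: the Newton-identity substitution produces many parking-function terms with alternating signs, and one must exhibit either a clean involution or, preferably, a direct bijective argument showing that precisely the preference-function sum survives. A secondary difficulty is the statistic matching, in particular verifying that the descent set of the reading word of a preference function agrees with the descent set that governs the fundamental quasisymmetric expansion on the Dyck side, and that the square-path $\dinv$ agrees with the parking-function $\dinv$ after the breakpoint rotation.

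An alternative route that would leverage the results of this paper is to begin from \refq{shuffleThmLLT}, which expresses $\nabla\elementaryE_n$ as a $t$-weighted sum of vertical-strip LLT polynomials indexed by Dyck paths, and combine it with Newton's identity to obtain an analogous expansion for $(-1)^{n-1}\nabla\powerSum_n$. Substituting the $\elementaryE$-expansion of \refc{expansion} after the shift $q \mapsto q+1$ would then give an $\elementaryE$-positive formula (an analogue of \refc{shuffleCor}) from which, via the zeta map and the correspondence between orientations and preference functions, one could hope to recover the $\gessel$-expansion of \reft{squarePathsThm} directly.
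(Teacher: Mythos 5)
First, note that the paper itself contains no proof of this statement: it is quoted with attribution as Sergel's Square Paths Theorem and then used as a black box to deduce \reft{squarePathsThmLLT} (``It is straightforward to deduce this from \reft{squarePathsThm}, by unraveling the definitions''). So your proposal has to stand on its own as a reconstruction of Sergel's argument, and as written it has a genuine gap at its very first step. Newton's identity does write $(-1)^{n-1}\powerSum_n$ as a signed combination of products $\elementaryE_\lambda$, but $\nabla$ is not a ring homomorphism, and neither \reft{shuffleTheorem} nor its compositional refinement says anything about $\nabla\elementaryE_\lambda$ when $\lambda$ has more than one part: the Compositional Shuffle Theorem computes $\nabla C_\alpha\cdot 1$ for compositions $\alpha$, with $\elementaryE_n=\sum_{\alpha\vDash n}C_\alpha\cdot 1$, not $\nabla$ of products of elementary symmetric functions. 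Hence the ``apply $\nabla$ termwise and invoke the Shuffle Theorem for each $\elementaryE_\lambda$-summand'' step never connects to the available input, and the alternating-sign cancellation you then flag as the main obstacle is an artifact of this wrong reduction rather than a feature of the actual proof.

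Sergel's proof runs differently: one starts from an explicit symmetric-function identity expressing $(-1)^{n-1}\powerSum_n$ as a $q$-weighted combination of the compositional pieces $E_{n,k}=\sum_{\ell(\alpha)=k}C_\alpha\cdot 1$ (with explicit coefficients involving ratios of $q$-integers), so that after applying $\nabla$ and the Compositional Shuffle Theorem no signed cancellation is needed at all; the coefficients are then absorbed combinatorially by a counting lemma which, for each parking function whose Dyck path has prescribed touch data, enumerates (with the correct $\area$ and $\dinv$ shifts) the preference functions that project to it under the cut-at-the-lowest-diagonal-and-rotate map. Your sketch defers exactly these two ingredients --- the precise algebraic identity and the weighted bijection replacing the hoped-for involution --- which are the heart of the proof. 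The alternative route you suggest through \refq{shuffleThmLLT} and \refc{expansion} inherits the same non-multiplicativity problem, and in addition \reft{squarePathsThmLLT} in this paper is itself deduced from Sergel's theorem, so that direction cannot furnish an independent proof.
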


There is again a close connection with vertical-strip LLT polynomials.
We shall give an example explaining the connection.
\begin{exa}{prefFunc}
A preference function is simply a map $f:[n]\to [n]$.
We can describe such an $f$ by writing $f^{-1}(i)$
in column $i$ in an $n \times n$-diagram, sorted increasingly from the bottom.
Moreover, we demand that the entries $f^{-1}(i)$ appear in rows lower than
$f^{-1}(j)$ for all $i<j$. This uniquely determines the diagram.
The numbers in the digram are called \defin{cars}\footnote{This is due to the close connection with parking functions.}.
There is a unique North-East path from $(0,0)$ to $(n,n)$
where each North step is immediately to the left of a car.
We set $\mathdefin{\alpha_i \coloneqq |f^{-1}(i)|}$.
For example, the preference function $f = (5,2,5,2,2,3)$
give rise to the diagram (a) here:
\begin{eq}{prefFunc}
(a)\;
\begin{tikzpicture}
\fill[lightblue] (1,0)--(1,3)--(2,3)--(2,4)--(4,4)--(4,6)--(6,6)--(6,5)--
(5,5)--(5,4)--(4,4)--(4,3)--(3,3)--(3,2)--(2,2)--(2,1)--(1,1)--(1,0);
\draw[step=1em,gray] (0,0) grid (6,6);
\draw[thickLine] (0,0)--(1,0)--(1,3)--(2,3)--(2,4)--(4,4)--(4,6)--(6,6);
\draw[thinLine,dashed] (2,1)--(6,5);
\draw[entries]
(2,3)node{$5$}
(2,2)node{$4$}
(2,1)node{$2$}
(3,4)node{$6$}
(5,6)node{$3$}
(5,5)node{$1$};
\end{tikzpicture}
\qquad 
\qquad 
(b)\;
\begin{tikzpicture}
\draw[thinLine,black!10] (0,1)--(5,6);
\draw[thinLine,black!10] (0,0)--(6,6);
\draw[thinLine,black!10] (1,0)--(6,5);
\draw[step=1em,gray] (0,0) grid (6,6);
\draw[thickLine] (0,0)--(1,0)--(1,3)--(2,3)--(2,4)--(4,4)--(4,6)--(6,6);
\draw[entries]
(2,3)node{$3$}
(2,2)node{$5$}
(2,1)node{$6$}
(3,4)node{$2$}
(5,6)node{$1$}
(5,5)node{$4$};
\end{tikzpicture}
\qquad 
\qquad 
(c)\;
\ytableausetup{boxsize=0.9em,aligntableaux=center}
\begin{ytableau}
*(lightgray)  &*(lightgray) & *(lightgray) & *(lightgray)  & *(lightgray) \to & *(yellow) 6 \\
*(lightgray)  & *(lightgray)   & *(lightgray) \to  & & *(yellow) 5 \\
*(lightgray) \to & &  & *(yellow) 4 \\
   &   & *(yellow) 3 \\
   & *(yellow)  2 \\
*(yellow)  1
\end{ytableau}
\end{eq}
The path $P$ is given by $\mathtt{ennneneennee}$. Moreover, $\alpha = (0,3,1,0,2,0)$,
and note that $P$ is uniquely determined by $\alpha$.
There are a few other statistics on preference functions 
that only depend on $\alpha$. We let $\mathdefin{\area(\alpha)}$ be the number 
of squares below $P$ and strictly above the lowest 
diagonal of the form $y=x+k$, containing a car.
In our diagram above, $\area(\alpha)=8$.
Moreover, we let $\mathdefin{\mathrm{below}(\alpha)}$
be the number of cars strictly below the $y=x$ line. 
In this case, $\mathrm{below}(\alpha) = 1$, as the car labeled $2$ is below the main diagonal.

Finally, given $\alpha$, we associate a Schr{\"o}der path $\mathdefin{P(\alpha)}$
as follows. We read the cars diagonal by diagonal as in (b), this gives 
the \defin{reading order} of the cars.
Then $\dedge{x}{y}$ is a diagonal steps in the Schr{\"o}der path 
if and only if label $x$ is immediately below $y$ in the reading word order (b).
We note that the map that takes $P$ to the Schr{\"o}der path is also a type of $\zeta$-map,
see \cite[Sec.~5.2]{ArmstrongHanusaJones2014}.
\end{exa}

We can now state the expansion of $(-1)^{n-1} \nabla \powerSum_n$
in terms of vertical-strip LLT polynomials.
It is straightforward to deduce this from \reft{squarePathsThm},
by unraveling the definitions.

\begin{thm}{squarePathsThmLLT}
We have the expansion
\begin{eq*}
(-1)^{n-1} \nabla \powerSum_n
=
\sum_{\alpha}
t^{\area(\alpha)} q^{\mathrm{below}(\alpha)} \LLT_{P(\alpha)}(\xvec;q),
\end{eq*}
where the sum is taken over all 
weak compositions $\alpha$ with $n$ parts, of size $n$,
and $P(\alpha)$ is as in \refe{prefFunc}.
\end{thm}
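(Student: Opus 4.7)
The plan is to deduce Theorem~\ref{Theorem:squarePathsThmLLT} from Sergel's Square Paths Theorem (\reft{squarePathsThm}) by regrouping the sum over preference functions according to their column composition $\alpha=(\alpha_1,\dotsc,\alpha_n)$ with $\alpha_i=|f^{-1}(i)|$. First I would observe that the underlying path, the Schr{\"o}der path $\nuvec(\alpha)$ obtained via the zeta-type construction of \refe{prefFunc}, and the area statistic all depend only on $\alpha$; the remaining data --- the labels of the cars in each column, read from bottom to top --- records a bijection between preference functions with composition $\alpha$ and colorings $\kappa$ of the decorated unit-interval graph $\Gamma_{\nuvec(\alpha)}$ whose multiset of colors is $\{1^{\alpha_1},\dotsc,n^{\alpha_n}\}$. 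Strict edges of $\nuvec(\alpha)$ correspond exactly to pairs of cars stacked immediately on top of one another, so the strict-inequality condition defining colorings matches the convention that $f^{-1}(i)$ is listed increasingly.

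Next I would verify the decomposition of the preference-function statistics. The $\dinv$ of $w$ is built so that each diagonal-inversion pair of cars lying weakly above the main diagonal contributes exactly when it forms an ascent under the reading-order labeling of $\Gamma_{\nuvec(\alpha)}$, while the contribution from pairs involving a car strictly below the main diagonal depends only on $\alpha$ and equals $\mathrm{below}(\alpha)$. Hence
\[
\dinv(w)=\mathrm{below}(\alpha)+\asc(\kappa_w),\qquad \area(w)=\area(\alpha),
\]
where $\kappa_w$ is the coloring associated to $w$. Similarly, the reading word $\sigma(w)$ has the same inverse descent set as the standardization of $\kappa_w$.

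Finally I would invoke the fundamental quasisymmetric expansion of vertical-strip LLT polynomials, which expresses $\LLT_P(\xvec;q)$ as $\sum_{\kappa}q^{\asc(\kappa)}\gessel_{\mathrm{IDES}(\mathrm{std}(\kappa))}(\xvec)$ (this is the $\gessel$-level statement that, after Jacobi--Trudi, produces \refq{ELG_Schur}). Combining this with the statistic decomposition above, the contribution of all preference functions with a fixed column composition $\alpha$ equals $t^{\area(\alpha)}q^{\mathrm{below}(\alpha)}\LLT_{\nuvec(\alpha)}(\xvec;q)$, and summing over $\alpha$ yields the claim.

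The main obstacle is purely bookkeeping: one must verify precisely that the zeta-type reordering of cars into reading order intertwines $\dinv$ on preference functions with $\mathrm{below}(\alpha)+\asc$ on colorings, and that the inverse descent set of $\sigma(w)$ agrees with that of $\mathrm{std}(\kappa_w)$. This is a standard but tedious verification, analogous to the Dyck-path zeta-map computations in \cite{CarlssonMellit2017} and \cite{ArmstrongHanusaJones2014}, and is what the authors refer to as ``unraveling the definitions.''
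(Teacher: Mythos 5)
Your route is exactly the paper's: the authors obtain \reft{squarePathsThmLLT} from Sergel's theorem (\reft{squarePathsThm}) simply ``by unraveling the definitions,'' using the setup of \refe{prefFunc} --- group preference functions by the column composition $\alpha$, note that the Schr{\"o}der path $\nuvec(\alpha)$, $\area(\alpha)$ and $\mathrm{below}(\alpha)$ depend only on $\alpha$, and match the residual car-labeling data and $\dinv$ with colorings and $\asc$. Your proposal spells out the same steps the paper leaves implicit, so there is no divergence in method.

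One step is misstated and would fail as written. For fixed $\alpha$, the car labels do \emph{not} put the preference functions with composition $\alpha$ in bijection with colorings of $\Gamma_{\nuvec(\alpha)}$ whose color multiset is $\{1^{\alpha_1},\dotsc,n^{\alpha_n}\}$; they correspond to the \emph{bijective} colorings, i.e.\ the set $\symS_n(\nuvec(\alpha))$ defined in the paper (each car $1,\dotsc,n$ is used exactly once, and the increasing-up-columns convention is exactly the strict-edge condition). Your claim already breaks for $\alpha=(n,0,\dotsc,0)$: there is one preference function, but no coloring with a repeated color across a strict edge, so no coloring of content $(n,0,\dotsc,0)$ at all. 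Correspondingly, the fundamental expansion you invoke must be the sum over $\symS_n(P)$ only (the $\gessel$-level statement behind \refq{ELG_Schur}), not over all colorings $\kappa$. A smaller imprecision: $\mathrm{below}(\alpha)$ enters $\dinv$ as a bonus counting the cars strictly below the diagonal themselves, not as the contribution of ``pairs involving'' such cars (those pairs still convert into ascent edges of the coloring). With these corrections the decomposition $\area(w)=\area(\alpha)$, $\dinv(w)=\mathrm{below}(\alpha)+\asc(\kappa_w)$, and the matching of inverse descent sets are precisely the bookkeeping the authors summarize as ``unraveling the definitions,'' and your argument then goes through.
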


\begin{table}
\begin{tabular}{ll}
\toprule
$n$ & $(-1)^{n-1} \nabla \powerSum_n$ \\
\midrule
1 & $\schurS_{1}$ \\
2 & $\schurS_{2} + (q + t + q t) \schurS_{11}$ \\
3 & $\schurS_{3} +  (q + q^2 + t + q t + q^2 t + t^2 + q t^2 + q^2 t^2)\schurS_{21} +$ \\
  & $+(q^3 + q t + q^2 t + q^3 t + q t^2 + q^2 t^2 + q^3 t^2 + t^3 + q t^3 +
   q^2 t^3) \schurS_{111}$ \\
\bottomrule
\end{tabular}
\caption{The Schur expansion of $(-1)^{n-1} \nabla \powerSum_n$ for the first few $n$.}
\label{Table:nablaPSchur}
\end{table}

By applying \refc{expansion}
we again obtain a new combinatorial expression for $(-1)^{n-1} \nabla \powerSum_n$
and note that it is $\elementaryE$-positive
if we replace $q$ with $q+1$.
We also mention \cite{Bergeron2013} and \cite[Sec.~4]{Bergeron2017}, 
where related $\elementaryE$-positivity conjectures are made.

\subsection{The Dyck path algebra}
The \defin{Dyck path algebra} $\mathdefin{\DyckPathAlgebra}$ is defined by E.~Carlsson and A.~Mellit in~\cite[Def.~5.1]{CarlssonMellit2017} as the path algebra over $\field$ of the quiver with vertex set $\setN$ with directed edges $d_{+}$ from $k$ to $k+1$, directed edges $d_{-}$ from $k+1$ to $k$, and loops $T_1,\dots,T_{k-1}$ at $k$ for all $k\in\setN$, subject to the relations
\begin{eq*}
(T_i-1)(T_i+q)=0,\quad
T_iT_{i+1}T_i=T_{i+1}T_iT_{i+1},\quad
T_iT_j=T_jT_i\text{ for }\abs{i-j}>1,
\end{eq*}
\begin{eq*}
T_id_{-}=d_{-}T_i,\quad
d_{+}T_i=T_{i+1}d_{+},\quad
T_1d_{+}^2=d_{+}^2,\quad
d_{-}^2T_{k-1}=d_{-}^2,
\end{eq*}
\begin{eq*}
d_{-}\varphi T_{k-1}=
q\varphi d_{-}\text{ for }k>1,\quad
\text{and}\quad
T_1\varphi d_{+}=qd_{+}\varphi
.
\end{eq*}
Here $k$ denotes the vertex on which the respective paths begin, and we denote
$\varphi\coloneqq(d_{-}d_{+}-d_{+}d_{-})/(q-1)$.

The Dyck path algebra can be realised as an algebra of operators acting on (generalized) symmetric functions.
For $k\in\setN$ let $\mathdefin{V_k}\coloneqq\Lambda\otimes\setQ[y_1,\dots,y_k]$, such that $V_0=\Lambda$, and set
\begin{eq*}
\mathdefin{V_{*}}
\coloneqq
\bigoplus_{k\in\setN}V_k
.
\end{eq*}
For $k\in\setN$ and $i\in[k-1]$ define $\mathdefin{T_i}:V_k\to V_k$ by
\begin{eq*}
T_i(f(\xvec,q,\yvec))
\coloneqq
\frac{(q-1)y_{i+1}f(\dots y_i,y_{i+1}\dots)+(y_{i+1}-qy_i)f(\dots y_{i+1},y_i\dots)}{y_{i+1}-y_i}
.
\end{eq*}
For all $k\in\setN$ define a \defin{raising operator} $\mathdefin{d_{+}}:V_k\to V_{k+1}$ and a \defin{lowering operator} $\mathdefin{d_{-}}:V_{k+1}\to V_k$ by
\begin{eq*}
d_{+}(f(\xvec;q;\yvec))
&\coloneqq
T_1T_2\cdots T_k(f[\xvec+(q+1)y_{k+1};q;\yvec])
,\\
d_{-}(f(\xvec;q;\yvec))
&\coloneqq
\Big(-f[\xvec-(q-1)y_{k+1};q;\yvec]
\sum_{i\in\setN}\frac{1}{(-y_{k+1})^i}\elementaryE_i(\xvec)\Big)\Big|_{y_{k+1}^{-1}}
.
\end{eq*}
Here brackets denote plethystic substitution in the $\xvec$-variables, and $g|_{y_k^i}$ denotes the coefficient of $y_k^i$ in $g$.
Finally for all $k\geq 1$ define $\mathdefin{\varphi}:V_k\to V_k$ by
\begin{eq*}
\varphi
\coloneqq
\frac{1}{q-1}(d_{-}d_{+}-d_{+}d_{-}).
\end{eq*}
To each Schr{\"o}der path $P=s_1s_2\cdots s_{\ell}$ we associate an operator
\begin{eq*}
\mathdefin{d_P}
\coloneqq
d_1d_2\cdots d_{\ell}
\qquad
\text{where}
\qquad
d_i
=
\begin{cases}
d_{-}&\quad\text{if }s_i=\nstep,\\
\varphi&\quad\text{if }s_i=\dstep,\\
d_{+}&\quad\text{if }s_i=\estep,
\end{cases}
\end{eq*}
for all $i\in[\ell]$.
One of the fundamental insights of E.~Carlsson and A.~Mellit is that these path operators are creation operators of vertical-strip LLT polynomials.

\begin{thm}{pathCharacteristicFunction}[{{\cite[Thm.~4.2]{CarlssonMellit2017}}}]
Let $P$ be a Schr{\"o}der path. Then $d_P(1)=\LLT_P(\xvec;q)$.
\end{thm}

In \cite[Sec.~4]{CarlssonMellit2017} \reft{pathCharacteristicFunction} is proven by analyzing the effect of the raising and lowering operators on characteristic functions of partial Dyck paths.
This can be compared to our proof of the bounce relations in \refs{colorings}, which has the advantage that we do not need to introduce additional variables.
The following theorem is a consequence of the results of  M.~D'Adderio in~\cite{DAdderio2020}.

\begin{thm}{DAdderio}[{{\cite{DAdderio2020}}}]
Let $F:\SCH{}\to\Lambda$ be defined by $P\mapsto F_P=d_P(1)$.
Then $F$ satisfies the conditions in \reft{recursion}.
\end{thm}

In fact, this is precisely how \reft{recursion} was discovered.
M.~D'Adderio proceeds to use \reft{DAdderio} in conjunction with \reft{pathCharacteristicFunction} to prove the $\elementaryE$-positivity of vertical-strip LLT polynomials.
On the other hand, combining \reft{DAdderio} with Theorems~\ref{Theorem:recursion} and~\ref{Theorem:LLT} one obtains an alternative proof of \reft{pathCharacteristicFunction}.

\subsection{Chromatic quasisymmetric functions}

Chromatic symmetric functions were defined by R.~Stanley as a symmetric function 
generalization of the chromatic polynomial of a graph in \cite{Stanley1995}.
These functions have received much recent attention since J.~Shareshian and M.~Wachs 
introduced a quasisymmetric $q$-analog in \cite{ShareshianWachs2016}.
E.~Carlsson and A.~Mellit observed a plethystic relationship between 
chromatic quasisymmetric functions $\mathdefin{\chrom_P(\xvec;q)}$ of unit-interval 
graphs and unicellular LLT polynomials.
This was implicitly done already in \cite[Sec.~5.1]{HaglundHaimanLoehr2005}.
It is also hidden in \cite[Eq.~(178)]{GuayPaquet2016x}.
The relationship is as follows:

\begin{lem}{plethRelation}[{{\cite[Prop.~3.5]{CarlssonMellit2017}}}]
Let $P$ be a Dyck path of size $n$. Then
\begin{eq}{plethRelation}
\chrom_P(\xvec;q)
=
(q-1)^{-n} \LLT_P[\xvec(q-1);q]
,
\end{eq}
where the bracket denotes a substitution using plethysm.
\end{lem}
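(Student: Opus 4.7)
I would follow the classical Haglund--Haiman--Loehr strategy of decomposing the sum defining $\LLT_P$ along set partitions that record the fibres of a coloring, and then showing that the plethystic substitution $\xvec \mapsto \xvec(q-1)$ effects the sign-reversing cancellation that collapses the result to a sum over \emph{proper} colorings only. Concretely, for $\kappa : [n] \to \setNN$ let $\pi_\kappa$ denote its fibre partition, i.e.\ the partition of $[n]$ into level sets of $\kappa$. Since within-block edges carry $\kappa(u)=\kappa(v)$ and contribute zero to $\asc$, grouping by $\pi_\kappa$ produces
\[
\LLT_P(\xvec;q) = \sum_{\pi} \;\sum_{\bar\kappa : \pi \hookrightarrow \setNN} q^{\asc(\bar\kappa)} \prod_{B \in \pi} x_{\bar\kappa(B)}^{|B|},
\]
where $\bar\kappa$ ranges over injections assigning distinct colours to distinct blocks, and $\asc(\bar\kappa)$ now counts only the cross-block edges of $\uig_P$ coloured ascendingly.

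Next I would expand each inner sum as a symmetric function and invoke the basic plethystic identity $\powerSum_k[\xvec(q-1)] = (q^k-1)\powerSum_k(\xvec)$. In the power-sum basis, each block $B$ ultimately contributes a factor $(q^{|B|}-1) = (q-1)[|B|]_q$, so the total contribution of a partition $\pi$ carries a factor $(q-1)^{\ell(\pi)}\prod_B [|B|]_q$. After dividing by $(q-1)^n$, the missing powers $(q-1)^{n-\ell(\pi)}$ for each $\pi$ are absorbed by a M\"obius-type identity on the partition lattice, using $\mu(\hat{0},\pi) = \prod_B (-1)^{|B|-1}(|B|-1)!$; morally, this is an inclusion-exclusion over which collections of same-colour vertices are permitted to collide. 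After the cancellations, every partition $\pi$ containing a block that is \emph{not} an independent set of $\uig_P$ vanishes, and the surviving terms reassemble into the fibre-partition decomposition of $\chrom_P(\xvec;q)$.

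\textbf{The hard part} is the plethystic manipulation itself: the $\asc$ statistic is intrinsically tied to the linear order on $[n]$ and does not behave transparently under power-sum expansion, so the identity must be tracked through a more combinatorial channel. The cleanest route is to pass to the Gessel fundamental basis, in which both $\LLT_P$ and $\chrom_P$ admit parallel expansions, or alternatively to construct an explicit sign-reversing involution on augmented pairs $(\kappa,S)$ where $S$ is a distinguished set of "bad" edges inside a monochromatic block containing an edge of $\uig_P$; this second strategy is in essence the original argument of Haglund--Haiman--Loehr that Carlsson--Mellit invoke.
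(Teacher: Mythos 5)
The paper does not prove this lemma at all: it is imported verbatim from \cite[Prop.~3.5]{CarlssonMellit2017} (with the remark that it is implicit already in \cite[Sec.~5.1]{HaglundHaimanLoehr2005} and hidden in \cite[Eq.~(178)]{GuayPaquet2016}), and it is only \emph{used}, together with \reft{dyckRecursion}, to deduce \refc{chromaticRecursion}. So there is no in-paper argument to compare with; the only question is whether your proposal stands on its own as a proof, and as written it does not.

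The gap sits exactly at the step you yourself flag as ``the hard part''. Grouping colorings by their fibre partition $\pi$ is harmless, but the inner sum over injective colorings of the blocks is a chromatic-type quasisymmetric generating function of a quotient structure whose $\asc$-weight depends on the \emph{relative order} of the block colours; it does not factor over blocks, so the assertion that ``each block $B$ ultimately contributes a factor $(q^{|B|}-1)$'' after expanding in power sums is unjustified (and false termwise). Consequently the M\"obius-type cancellation you invoke --- that after substituting $\powerSum_k \mapsto (q^k-1)\powerSum_k$ and dividing by $(q-1)^n$ precisely the partitions containing a non-independent block cancel, while the partitions with all blocks independent survive with the \emph{unchanged} weight $q^{\asc}$ --- is exactly the content of the lemma and is nowhere argued; note that it is not enough to kill the bad terms, one must also see that the surviving proper-coloring terms come out with the right coefficients. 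Your fallback (pass to the fundamental basis, or build a sign-reversing involution on super fillings, ``in essence the HHL argument'') names the correct channel but does not execute it: no fundamental expansions of $\LLT_P$ and $\chrom_P$ are written down, no statement about how that basis transforms under $\xvec \mapsto \xvec(q-1)$ is made, and no involution is defined or verified to preserve the monomial and the $\asc$-weight while reversing sign. To turn the proposal into a proof you would have to carry out one of these routes in detail --- e.g.\ interpret $\LLT_P[\xvec(q-1);q]$ as a signed sum over colorings by a super alphabet and exhibit an involution cancelling exactly the non-proper contributions --- which is where all the actual work of \cite[Prop.~3.5]{CarlssonMellit2017} lies.
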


Using \refl{plethRelation} and \reft{dyckRecursion} we obtain a new 
characterization of chromatic quasisymmetric functions of unit-interval graphs.

\begin{cor}{chromaticRecursion}
The function that assigns to each Dyck path $P$ the chromatic quasisymmetric function of the unit-interval graph $\uig_P$ is the unique function $F:\DYCK{}\to\Lambda$, $P\mapsto F_P\xqvar$ that satisfies the following conditions:

\begin{enumerate}[(i)]
\myi{chromaticRecursion:e}
For all $k\in\setN$  the initial condition 
$F_{\nstep(\nstep\estep
)^{k}\estep}\xqvar
=
\chrom_{\nstep(\nstep\estep
)^{k}\estep}\xqvar
$ holds, where $\chrom_{\nstep(\nstep\estep
)^{k}\estep}\xqvar$ denotes the chromatic quasisymmetric function of the path on $k+1$ vertices\footnote{See \cite{ShareshianWachs2016} or \cite{Athanasiadis2015} for explicit formulas in terms of elementary symmetric functions, respectively power-sum symmetric functions.}.
\myi{dyckRecursion:mult:chrom}
The function $F$ is multiplicative, that is,
$F_{PQ}\xqvar
=
F_{P}\xqvar
F_{Q}\xqvar
$
for all $P,Q\in\DYCK{}$.

\setcounter{enumi}{5}
\myi{chromaticRecursion:bounce}
Let $P\in\DYCK{}$ be a Dyck path and let $(x,z)\in\setZ^2$ be a point on $P$ with $x+1<z$.
If the bounce decomposition of $P$ at $(x,z)$ is given by $P=U\nstep\bs\nstep V\nstep\estep\bs\estep W$ then
\begin{eq*}
F_{U\nstep\bs\nstep V\nstep \estep\bs\estep W}\xqvar
&= (q+1)F_{U\nstep\bs\nstep V\estep\bs\nstep\estep W}\xqvar
- q F_{U\nstep\bs\nstep V\estep\bs\estep\nstep W}\xqvar.
\end{eq*}
If the bounce decomposition of $P$ at $(x,z)$ is given 
by $P=U\nstep\estep\bs\nstep V\nstep\estep\bs\estep W$ then
\begin{eq*}
&F_{U{\estep\nstep\bs\nstep}V{\estep\nstep\bs\estep}W}\xqvar
+
F_{U{\nstep\estep\bs\nstep}V{\nstep\estep\bs\estep}W}\xqvar
+
F_{U{\nstep\bs\nstep\estep}V{\estep\bs\estep\nstep}W}\xqvar = \\
&F_{U{\estep\nstep\bs\nstep}V{\nstep\estep\bs\estep}W}\xqvar
+
F_{U{\nstep\estep\bs\nstep}V{\estep\bs\estep\nstep}W}\xqvar
+
F_{U{\nstep\bs\nstep\estep}V{\estep\bs\nstep\estep}W}\xqvar.
\end{eq*}
\end{enumerate}
\end{cor}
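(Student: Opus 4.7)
The plan is to deduce this corollary from Theorem~\ref{Theorem:dyckRecursion} via the plethystic bridge of Lemma~\ref{Lemma:plethRelation}. Given a function $F:\DYCK{}\to\Lambda$ satisfying the stated hypotheses, I would define its plethystic lift
$\hat{F}_P(\xvec;q) \coloneqq (q-1)^{|P|}\, F_P[\xvec/(q-1);\, q]$,
where $\xvec \mapsto \xvec/(q-1)$ is the inverse of $\xvec \mapsto \xvec(q-1)$ on the algebra $\Lambda_{\setQ(q)}$. The goal is to verify that $\hat{F}$ satisfies the hypotheses of Theorem~\ref{Theorem:dyckRecursion}, which then forces $\hat{F}_P = \LLT_P$, and reversing the plethysm yields $F_P = \chrom_P$.

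The verification is essentially term by term. Multiplicativity passes from $F$ to $\hat{F}$ because plethysm commutes with multiplication and the prefactor $(q-1)^{|P|}$ is multiplicative in $|P|$. For the bounce relations in condition~(vi), the crucial observation is that all Schr{\"o}der paths appearing in a single relation share a common size, so the common factor $(q-1)^{|P|}$ cancels and the relations translate verbatim between the chromatic and LLT settings. For the initial condition, applying Lemma~\ref{Lemma:plethRelation} to $P = \nstep(\nstep\estep)^k\estep$ gives
\[
\LLT_P(\xvec;q) = (q-1)^{k+1}\,\chrom_P[\xvec/(q-1);\, q],
\]
so condition~(i) of the corollary becomes precisely condition~(i) of Theorem~\ref{Theorem:dyckRecursion} after applying the plethysm. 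This establishes uniqueness.

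Existence requires checking that $P\mapsto\chrom_P$ itself satisfies the three conditions. Multiplicativity is classical (a disjoint union of unit-interval graphs has chromatic quasisymmetric function equal to the product), the initial condition is tautological, and the bounce relations follow from the corresponding LLT bounce relations — already part of Theorem~\ref{Theorem:dyckRecursion} — by applying the inverse plethysm in Lemma~\ref{Lemma:plethRelation}. There is no serious obstacle once those two ingredients are in hand; the only point requiring genuine care is confirming that the plethystic substitution is size-graded, so that the $(q-1)^{|P|}$ prefactor transports the linear relations cleanly between the two settings.
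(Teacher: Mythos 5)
Your proposal is correct and takes essentially the same route as the paper, which obtains the corollary precisely by combining \refl{plethRelation} with \reft{dyckRecursion}; your write-up merely makes explicit the lift $\hat{F}_P(\xvec;q)=(q-1)^{|P|}F_P[\xvec/(q-1);q]$ and the observation that the size-homogeneous, $\setQ(q)$-linear bounce relations and multiplicativity transfer across this invertible plethystic substitution, details the paper leaves to the reader.
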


It is a major open problem to prove that the chromatic quasisymmetric 
functions $\chrom_P(\xvec;q)$ expand positively in the $\elementaryE$-basis.
This is known as the Shareshian--Wachs conjecture~\cite[Conj.~4.9]{ShareshianWachs2012}, 
and for $q=1$ it reduces to the 
Stanley--Stembridge conjecture~\cite{StanleyStembridge1993,Stanley1995}.
Some special cases have been solved, for 
example in~\cite{ChoHuh2019,HaradaPrecup2019,HuhNamYoo2020,FoleyHoangMerkel2019,
DahlbergWilligenburg2018,Dahlberg2018x,DahlbergFoleyWilligenburg2020} using different methods.
\reft{LLTe} together with \refl{plethRelation} implies the following expansion.

\begin{cor}{chrom_e}
For all Dyck paths $P\in\DYCK{}$  we have
\begin{eq}{chrom_e}
\chrom_P(\xvec;q)
=
\sum_{\theta \in \ORIENTS(P)} (q-1)^{\asc(\theta)-n} 
\elementaryE_{\hrvpp(\theta)}[\xvec(q-1)].
\end{eq}
\end{cor}
Why the right-hand side in \refq{chrom_e} is $\elementaryE$-positive remains very much unclear.

We remark that vertical-strip LLT polynomials do not seem to have a natural pendent in the world of chromatic symmetric functions in general.
In principle one could use \refq{plethRelation} also when $P$ is a Schr{\"o}der path, however, the resulting functions are not even monomial positive.

\subsection{Hessenberg varieties}\label{Section:hessenberg}

Hessenberg varieties have been studied since the work of F.~De Mari, C.~Procesi, and M.~Shayman~\cite{MariProcesiShayman1992}.
Following up a conjecture of J.~Shareshian and M.~Wachs, recently unicellular LLT polynomials and chromatic quasisymmetric functions of unit-interval graphs have been connected to the equivariant cohomology rings of regular semisimple Hessenberg varieties by P.~Brosnan and T.~Chow, and independently by M.~Guay-Paquet~\cite{BrosnanChow2018,GuayPaquet2016x}.
Here we follow more or less the notation of \cite{GuayPaquet2016x}.

A \defin{flag} in $\setC^n$ is a sequence $F_{\bullet}
\coloneqq (F_1,F_2,\dotsc,F_n)$ of subspaces
\begin{eq*}
\{0\}\subset F_1\subset F_2\subset\dots\subset F_n=\setC^n
\end{eq*}
such that $\dim F_{i}=i$.
The \defin{full flag variety} $\mathdefin{\Flag(\setC^n)}$ is defined as the set of all flags in $\setC^n$. 
Each Dyck path $P\in\DYCK{n}$ is readily identified with a \defin{Hessenberg function}, that is, a function $h:[n]\to[n]$ such that $h(i)\geq i$ for all $i\in[n]$, and $h(i+1)\geq h(i)$ for all $i\in[n-1]$.
Let $\mathdefin{M}\in\GL(n,\setC)$ be a diagonal matrix with $n$ distinct eigenvalues.
The \defin{regular semisimple Hessenberg variety of type $A$} indexed by $M$ and $P$ is the subvariety of $\Flag(\setC^n)$ given by
\begin{eq*}
\mathdefin{\Hess(M,P)}
\coloneqq
\{F_{\bullet}\in\Flag(\setC^n):MF_i\subseteq F_{h(i)}\text{ for all }i\in[n]\}
.
\end{eq*}
Let $\mathdefin{T}\cong(\setC^*)^n$ be the group of invertible diagonal matrices in $\GL(n,\setC)$.
Since the elements of $T$ commute with $M$ we have an action of $T$ on $\Hess(M,P)$ and can consider the \defin{equivariant cohomology ring} $\mathdefin{H_T^*(M,P)}\coloneqq\mathdefin{H_T^*(\Hess(M,P))}$.
The reader is referred to~\cite{Tymoczko2005,Tymoczko2008b} for excellent accessible expositions of this topic.
The ring $H_T^*(M,P)$ is isomorphic to a subring of the direct product of polynomial rings
\begin{eq*}
\mathdefin{A}\coloneqq
\prod_{w\in\symS_n}\setC[x_1,\dots,x_n]
=\{(f_w)_{w\in\symS_n}:f_w\in\setC[x_1,\dots,x_n]
\text{ for all }w\in\symS_n\}
\end{eq*}
whose elements satisfy divisibility conditions imposed by the edges of the unit-interval graph $\uig_P=(V,E)$.
To this end define the \defin{moment graph} of $P$ as the directed edge-labeled graph with vertex set $\symS_n$ and a directed edge $w\overset{(i,j)}{\longrightarrow}w(i,j)$ if and only if $i<j$, $w_i<w_j$, and $\edge{i}{j}\in E$.
Then
\begin{eq*}
H_T^*(M,P)
\cong
\left\{
(f_{w})_{w\in\symS_n}\in A:
(x_{w_i}-x_{w_j})|(f_{w}-f_{w'})
\text{ whenever }w\overset{(i,j)}{\longrightarrow}w'
\right\}
.
\end{eq*}
See also \cite{Teff2013} for additional details.
The symmetric group $\symS_n$ acts on $H_T^*(M,P)$ by
\begin{eq*}
u\cdot(f_w)_{w\in\symS_n}
\coloneqq
(u\cdot f_{u^{-1}w})_{w\in\symS_n}
.
\end{eq*}
This action respects the grading.
There are two natural ways to embed $\setC[x_1,\dots,x_n]$ into $H_T^*(M,P)$, namely
\begin{eq*}
\mathdefin{L}&\coloneqq
\{(f)_{w\in\symS_n}:f\in\setC[x_1,\dots,x_n]\}
\quad\text{and}
\\
\mathdefin{R}&\coloneqq
\{(w\cdot f)_{w\in\symS_n}:f\in\setC[x_1,\dots,x_n]\}
.
\end{eq*}
One can show that $H_T^*(M,P)$ is a free module over $L$ and over $R$.
This reflects the fact that $H_T^*(M,P)$ is a free module over $H_T^*(\point)$, the equivariant cohomology ring of a point.
Note that the action of $\symS_n$ fixes $L$ as a set, and fixes $R$ pointwise.
Define the \defin{graded Frobenius series} of $H_T^*(M,P)$ over $L$ as
\begin{eq*}
\mathdefin{\Frob(H_T^*(M,P),L;\xvec,q)}
\coloneqq
\sum_{w\in\symS_n}\trace(w,H_T^*(M,P),L;q)\;
\powerSum_{\lambda(w)}(\xvec)
,
\end{eq*}
where $\mathdefin{\lambda(w)}$ denotes the cycle type of $w$, and
\begin{eq*}
\mathdefin{\trace(w,H_T^*(M,P),L;q)}
\coloneqq
\sum_{b\in B}(\text{coefficient of }b\text{ in }w\cdot b)\;q^{\deg(b)}
,
\end{eq*}
where $B$ is a homogeneous basis of $H_T^*(M,P)$ over $L$.
Then the chromatic quasisymmetric function satisfies
\begin{eq}{chromatic_Hessenberg}
\chrom_P(\xvec;q)
=
\omega\Frob(H_T^*(M,P),L;\xvec,q)
.
\end{eq}
This was conjectured in \cite{ShareshianWachs2012} and proven in \cite{BrosnanChow2018}.
An independent proof was obtained by M.~Guay-Paquet.
Moreover it is shown in \cite[Lem.~158]{GuayPaquet2016x} that
\begin{eq}{LLT_Hessenberg}
\LLT_P(\xvec;q)
=
\Frob(H_T^*(M,P),R;\xvec,q)
.
\end{eq}
The results obtained in the present paper therefore imply relations 
among $\symS_n$-modules built from the modules $H_T^*(M,P)$ for various Dyck paths $P$.
For example consider the six-term relation in \refti{dyckRecursion}{bounce}.
It follows that there exists an isomorphism of graded $\symS_n$-modules
\begin{eq*}
&H_T^*(M,U{\estep\nstep\bs\nstep}V{\estep\nstep\bs\estep}W)
\oplus
H_T^*(M,U{\nstep\estep\bs\nstep}V{\nstep\estep\bs\estep}W)
\oplus
H_T^*(M,U{\nstep\bs\nstep\estep}V{\estep\bs\estep\nstep}W) \cong \\
&H_T^*(M,U{\estep\nstep\bs\nstep}V{\nstep\estep\bs\estep}W)
\oplus
H_T^*(M,U{\nstep\estep\bs\nstep}V{\estep\bs\estep\nstep}W)
\oplus
H_T^*(M,U{\nstep\bs\nstep\estep}V{\estep\bs\nstep\estep}W)
.
\end{eq*}
It would be interesting to know whether such relations 
can be understood directly or provide some insight in the geometry of these representations.
Note that the relations considered in \cite{BrosnanChow2018,HaradaPrecup2019}
are not linear in the sense that they involve chromatic quasisymmetric
functions indexed by Dyck paths of different sizes.

\subsection{Circular unit-interval digraphs}

The class of Dyck paths and unit-interval graphs can be extended to so called 
\defin{circular Dyck paths} and \defin{circular unit-interval graphs},
see \cite{AlexanderssonPanova2016,Ellzey2016,Ellzey2017}.
There are corresponding chromatic quasisymmetric functions 
and analogs of vertical-strip LLT polynomials in this setting.
Modifying the notion of highest reachable vertex slightly, 
there is an analog of \refq{conjFormula} in this extended setting.
It remains to prove $\elementaryE$-positivity for circular unit-interval graphs.
The positivity in the power-sum basis\footnote{After applying the involution $\omega$.} 
has been proved for the circular unit-interval graphs,
both for chromatic quasisymmetric functions and the vertical-strip LLT polynomials,
via a uniform method in \cite{AlexanderssonSulzgruber2018}.
Note that the plethystic relationship in \refl{plethRelation} 
no longer holds in the circular setting.

\subsection{Other open problems}
We conclude this paper with three additional open problems.

\begin{prob}{unimodality}
Consider the coefficients $a_\mu(q) \in \setN[q]$ defined by
\begin{eq*}
\LLT(\xvec;q+1) = \sum_{\mu} a_\mu(q) \elementaryE_\mu(q).
\end{eq*}
It seems that $a_\mu(q)$ is unimodal with mode $\lfloor \mu_1 / 2 \rfloor$. 
Moreover, if $a_\mu(q) = b_0+b_1q + \dotsb + b_\ell q^\ell$,
then for all $j\in \{1,2,\dotsc,\ell-1\}$, we seem to have
$
 b_{j-1}b_{j+1} \leq b_j^2
$. In other words, the coefficients of $a_\mu(q)$ seem to be \defin{log-concave}.
\end{prob}
A similar conjecture is stated in \cite[Conj.~25]{AlexanderssonPanova2016}.
Some partial results in this direction are given 
for example in \cite[Cor.~6.2]{Ellzey2017} and \cite[Thm.~4.9]{HuhNamYoo2020}.
See also \cite[Conj.~53]{AlexanderssonPanova2016} and \cite[Conj.~7.13]{AlexanderssonSulzgruber2018} for related conjectures on the $\omega\powerSum$-unimodality of certain LLT polynomials.

\begin{prob}{nonSymVersion}
In \cite{HaglundWilson2017,TewariWilsonZhang2019x} a non-symmetric extension
of the chromatic quasisymmetric functions is considered. 
It is natural to ask whether our recurrences extend to this setting.
\end{prob}

\begin{prob}{nonCommVersion}
There is a notion of non-commutative unicellular LLT polynomials introduced in \cite{NovelliThibon2019}.
Can one extend the $\elementaryE$-positivity result to the non-commutative setting?
\end{prob}

\subsection*{Acknowledgements}

Decisive parts of the research presented in this paper were conducted during a stay of the second named author at Institut
Mittag-Leffler in Djursholm, Sweden in the course of the program on Algebraic and Enumerative Combinatorics in Spring 2020.
We thankfully acknowledge the support of the Swedish Research
Council under grant no.~2016-06596, and thank Institut~Mittag-Leffler for its hospitality.
The authors would also like to thank J.~Haglund and A.~Morales for helpful discussions.

\section*{Appendix: Tables}
This appendix contains two tables that list all possible types of orientations that appear in the proofs of \reft{orientations:bounceI} and \reft{orientations:bounceIII}, and show how they are matched.
The following example illustrates how to read these tables.

\begin{exa}{use_table}
Consider the following orientation of the Schr{\"o}der path $P=\nstep\nstep\estep\nstep\nstep\nstep\estep\nstep\dstep\estep\nstep\estep\estep\estep\estep$ and let $(x,z)=(3,7)$.
\begin{eq*}
\theta
=
\begin{tikzpicture}
\fill[lightgray](0,2)--(1,2)--(1,5)--(2,5)--(2,6)--(3,6)--(3,7)--(4,7)--(4,8)--(0,8)--cycle;
\fill[yellow]
(0,0)rectangle(1,1)rectangle(2,2)rectangle(3,3)rectangle(4,4)rectangle(5,5)rectangle(6,6)rectangle(7,7)rectangle(8,8);
\draw
(0,1)--(1,1)--(1,8)
(0,2)--(2,2)--(2,8)
(0,3)--(3,3)--(3,8)
(0,4)--(4,4)--(4,8)
(0,5)--(5,5)--(5,8)
(0,6)--(6,6)--(6,8)
(0,7)--(7,7)--(7,8)
(0,0)--(1,0)--(1,1)--(2,1)--(2,2)--(3,2)--(3,3)--(4,3)--(4,4)--(5,4)--(5,5)--(6,5)--(6,6)--(7,6)--(7,7)--(8,7)--(8,8)--(0,8)--cycle;
\draw[red,->,thickLine](3,7)--(3,3)--(1,3);
\draw[entries]
(1,1)node{$1$}
(2,2)node{$2$}
(3,3)node{$3$}
(4,4)node{$4$}
(5,5)node{$5$}
(6,6)node{$6$}
(7,7)node{$7$}
(8,8)node{$8$}
(3,7)node{$\to$}
(2,3)node{$\to$}
(2,5)node{$\to$}
(3,4)node{$\to$}
(3,5)node{$\to$}
(4,6)node{$\to$}
(5,8)node{$\to$}
(6,7)node{$\to$}
;
\end{tikzpicture}
\qquad\mapsto\qquad\theta'=
\begin{tikzpicture}
\fill[lightgray](0,2)--(1,2)--(1,5)--(2,5)--(2,6)--(4,6)--(4,8)--(0,8)--cycle;
\fill[yellow]
(0,0)rectangle(1,1)rectangle(2,2)rectangle(3,3)rectangle(4,4)rectangle(5,5)rectangle(6,6)rectangle(7,7)rectangle(8,8);
\draw
(0,1)--(1,1)--(1,8)
(0,2)--(2,2)--(2,8)
(0,3)--(3,3)--(3,8)
(0,4)--(4,4)--(4,8)
(0,5)--(5,5)--(5,8)
(0,6)--(6,6)--(6,8)
(0,7)--(7,7)--(7,8)
(0,0)--(1,0)--(1,1)--(2,1)--(2,2)--(3,2)--(3,3)--(4,3)--(4,4)--(5,4)--(5,5)--(6,5)--(6,6)--(7,6)--(7,7)--(8,7)--(8,8)--(0,8)--cycle;
\draw[entries]
(1,1)node{$1$}
(2,2)node{$2$}
(3,3)node{$3$}
(4,4)node{$4$}
(5,5)node{$5$}
(6,6)node{$6$}
(7,7)node{$7$}
(8,8)node{$8$}
(4,7)node{$\to$}
(2,4)node{$\to$}
(2,5)node{$\to$}
(4,5)node{$\to$}
(3,6)node{$\to$}
(5,8)node{$\to$}
(6,7)node{$\to$}
;
\end{tikzpicture}
\end{eq*}
The bounce decomposition of $P$ at $(x,z)$ is $P=U{\nstep\bs\nstep}V{\dstep\bs\estep}W$ 
where $U=\nstep\nstep\estep$, $V=\nstep\estep\nstep$ and $W=\nstep\estep\estep\estep\estep$.
Therefore we need to consult Table~\ref{Table:orientations:bounceI} below.
We have $v=1$, $w=2$, $y=4$, $Q=U{\nstep\bs\nstep}V{\estep\bs\dstep}W$, 
and importantly $\hrv{\theta}{x}=8$, $\hrv{\theta}{y}=7$, and $\hrv{\theta}{z}=7$.
Thus we need to consider the weak standardisation of $877$ which is $211$.
Table~\ref{Table:orientations:bounceI} shows that there 
are four subcases in the row indexed by $\sigma=211$.
That is, all four orientations of the edges $\edge{x}{y}$ and $\edge{y}{z}$ could lead to this word.
In our case $\dedge{x}{y},\dedge{z}{y}\in\theta$.
Thus we are in the second subcase, which belongs to \starMap{5}.
This means we should apply the map $\psi$ of \refl{II:psi} and 
end up in case \starMap{5} in the column corresponding to $Q$.
In accordance with what the table predicts, the resulting orientation $\theta'$ 
contains $\dedge{y}{x}$, and the weak 
standardization of $\hrv{\theta'}{x},\hrv{\theta'}{y}$, $\hrv{\theta'}{z}$ is $121$.
Since $\theta$ has one more descent than $\theta'$ we should regard $\theta'$ as 
the second subcase of \starMap{5} (in column $Q$), which is multiplied by $q$.
The reader can check that if $\psi$ is applied to an orientation 
from the first subcase of \starMap{5} (in column $P$), then the number of ascents is preserved.
\end{exa}

\FloatBarrier

\renewcommand{\arraystretch}{0.6}

\begin{table}
\small
\begin{tabular}[ht]{c ccccc c ccccc }
\toprule
 &\multicolumn{5}{c}{$P$} & \phantom{space} & \multicolumn{5}{c}{$Q$}\\
 \cmidrule{2-6} \cmidrule{8-12} 
$\sigma$&$xy$&$yz$& & &Map &  &$q^*$&$xy$& & & Map \\
\midrule
$111$&$\dn$&$\dn$&$ $&$ $&\starMap{1}&\phantom{ }&1&$\dn$&$ $&$ $&\starMap{1}\\
&$\to$&$\dn$&$ $&$ $&&\phantom{ }&$q$&$\dn$&$ $&$ $&\\
&$\dn$&$\to$&$ $&$ $&\diamondMap{2}&\phantom{ }&1&$\to$&$ $&$ $&\diamondMap{2}\\
&$\to$&$\to$&$ $&$ $&&\phantom{ }&$q$&$\to$&$ $&$ $&\\

\midrule
$122$&&&&&&\phantom{ }&$1$&$\dn$&$ $&$ $&\starMap{3}\\
&&&&&&\phantom{ }&$q$&$\dn$&$ $&$ $&\\

\noalign{\vskip 3mm}
$212$&$\dn$&$\dn$&$ $&$ $&\starMap{3}\\
&$\dn$&$\to$&$ $&$ $\\

\midrule
$211$&$\dn$&$\dn$&$ $&$ $&\starMap{5}&\phantom{ }&$1$&$\dn$&$ $&$ $&\starMap{6}\\
&$\to$&$\dn$&$ $&$ $&&\phantom{ }&$q$&$\dn$&$ $&$ $&\\
&$\dn$&$\to$&$ $&$ $&\diamondMap{4}&\phantom{ }&$1$&$\to$&$ $&$ $&\diamondMap{4}\\
&$\to$&$\to$&$ $&$ $&&\phantom{ }&$q$&$\to$&$ $&$ $&\\
\noalign{\vskip 3mm}
$121$&$\dn$&$\dn$&$ $&$ $&\starMap{6}&\phantom{ }&$1$&$\dn$&$ $&$ $&\starMap{5}\\
&$\to$&$\dn$&$ $&$ $&&\phantom{ }&$q$&$\dn$&$ $&$ $&\\
\midrule
$221$&$\dn$&$\dn$&$ $&$ $&\beadMap{7}&\phantom{ }&$1$&$\dn$&$ $&$ $&\beadMap{7}\\
&$\to$&$\dn$&$ $&$ $&&\phantom{ }&$q$&$\dn$&$ $&$ $&\\
&$\dn$&$\to$&$ $&$ $&&\phantom{ }&$1$&$\to$&$ $&$ $&\\
&$\to$&$\to$&$ $&$ $&&\phantom{ }&$q$&$\to$&$ $&$ $&\\

\midrule
$132$&&&&&&\phantom{ }&$1$&$\dn$&$ $&$ $&\starMap{8}\\
&&&&&&\phantom{ }&$q$&$\dn$&$ $&$ $&\\

\noalign{\vskip 3mm}
$312$&$\dn$&$\dn$&$ $&$ $&\starMap{8}\\
&$\dn$&$\to$&$ $&$ $\\
\midrule
$231$&$\dn$&$\dn$&$ $&$ $&\beadMap{9}&\phantom{ }&$1$&$\dn$&$ $&$ $&\beadMap{9}\\
&$\to$&$\dn$&$ $&$ $&&\phantom{ }&$q$&$\dn$&$ $&$ $&\\
\midrule
$321$&$\dn$&$\dn$&$ $&$ $&\beadMap{10}&\phantom{ }&$1$&$\dn$&$ $&$ $&\beadMap{10}\\
&$\to$&$\dn$&$ $&$ $&&\phantom{ }&$q$&$\dn$&$ $&$ $&\\
&$\dn$&$\to$&$ $&$ $&&\phantom{ }&$1$&$\to$&$ $&$ $&\\
&$\to$&$\to$&$ $&$ $&&\phantom{ }&$q$&$\to$&$ $&$ $&\\
\bottomrule
\end{tabular}
\caption{Subcases for orientations, first bounce relation.
The missing $\sigma \in \{112,123,213\}$ are not possible, 
since the edge $\dedge{x}{z}$ is present in $P$ and $\dedge{y}{z}$ is present in $Q$.
The maps marked with $\starMap{i}$ are covered by \refl{II:switch},
while $\diamondMap{i}$ and $\beadMap{i}$ are covered by \refl{II:changeb}
and \refl{II:id}, respectively.
}\label{Table:orientations:bounceI}
\end{table}

\begin{table}
\small
\begin{tabular}[ht]{c ccccc c ccccc }
\toprule
 &\multicolumn{5}{c}{$P$} & \phantom{space} & \multicolumn{5}{c}{$Q$}\\
 \cmidrule{2-6} \cmidrule{8-12} 
$\sigma$&$xy$&$yz$& & &Map &  &$xy$&$wx$& & &Map \\

\midrule
$111$&$\dn$&$\dn$&$ $&$ $&\starMap{1}& \phantom{-}  &$\dn$&$\dn$& $ $&$ $&\starMap{1}\\
&$\dn$&$\to$&$ $&$ $&&\phantom{-}&$\dn$&$\to$&$ $&$ $&\\
&$\to$&$\dn$&$ $&$ $&&\phantom{-}&$\to$&$\dn$&$ $&$ $&\\
&$\to$&$\to$&$ $&$ $&&\phantom{-}&$\to$&$\to$&$ $&$ $&\\
\midrule
$122$&&&&&&\phantom{-}&$\dn$&$\dn$&$ $&$ $& \starMap{2}\\
&&&&&&\phantom{-}&$\dn$&$\to$&$ $&$ $\\
\noalign{\vskip 3mm}
$212$&$\dn$&$\dn$&$ $&$ $&\starMap{2}\\
&$\to$&$\dn$&$ $&$ $\\

\midrule
$211$&$\dn$&$\dn$&$ $&$ $&\starMap{3}&\phantom{-}&$\dn$&$\dn$&$ $&$ $&\starMap{5}\\
&$\to$&$\dn$&$ $&$ $& &\phantom{-}&$\to$&$\dn$&$ $&$ $& \\
&$\dn$&$\to$&$ $&$ $&\beadMap{4}&\phantom{-}&$\dn$&$\to$&$ $&$ $&\beadMap{4}\\
&$\to$&$\to$&$ $&$ $& &\phantom{-}&$\to$&$\to$&$ $&$ $& \\
\noalign{\vskip 3mm}
$121$&$\dn$&$\dn$&$ $&$ $&\starMap{5}&\phantom{-}&$\dn$&$\dn$&$ $&$ $&\starMap{3}\\
&$\dn$&$\to$&$ $&$ $&&\phantom{-}&$\dn$&$\to$&$ $&$ $\\
\midrule
$221$&$\dn$&$\dn$&$ $&$ $&\beadMap{6}&\phantom{-}&$\dn$&$\dn$&$ $&$ $&\beadMap{6}\\
&$\to$&$\dn$&$ $&$ $&&\phantom{-}&$\to$&$\dn$&$ $&$ $\\
&$\dn$&$\to$&$ $&$ $&&\phantom{-}&$\dn$&$\to$&$ $&$ $\\
&$\to$&$\to$&$ $&$ $&&\phantom{-}&$\to$&$\to$&$ $&$ $\\

\midrule
$132$&&&&&&\phantom{-}&$\dn$&$\dn$&$ $&$ $&\starMap{7}\\
&&&&&&\phantom{-}&$\dn$&$\to$&$ $&$ $\\
%
%
\noalign{\vskip 3mm}
$312$&$\dn$&$\dn$&$ $&$ $&\starMap{7}\\
&$\to$&$\dn$&$ $&$ $\\
\midrule
$231$&$\dn$&$\dn$&$ $&$ $&\starMap{8}&\phantom{-}&$\dn$&$\dn$&$ $&$ $&\starMap{9}\\
&$\dn$&$\to$&$ $&$ $&&\phantom{-}&$\dn$&$\to$&$ $&$ $\\
\noalign{\vskip 3mm}
$321$&$\dn$&$\dn$&$ $&$ $&\starMap{9}&\phantom{-}&$\dn$&$\dn$&$ $&$ $&\starMap{8}\\
&$\to$&$\dn$&$ $&$ $& &\phantom{-}&$\to$&$\dn$&$ $&$ $& \\
&$\dn$&$\to$&$ $&$ $&\beadMap{10}&\phantom{-}&$\dn$&$\to$&$ $&$ $&\beadMap{10}\\
&$\to$&$\to$&$ $&$ $& &\phantom{-}&$\to$&$\to$&$ $&$ $& \\
\bottomrule
\end{tabular}
\caption{Subcases for orientations, second bounce relation.
The maps marked with $\beadMap{i}$ correspond to \refl{IV:id} and the maps 
marked with $\starMap{i}$ are covered by \refl{IV:switch}.
The missing $\sigma \in \{112, 123, 213\}$ are not possible,
due to the edges $\dedge{x}{z}$ and $\dedge{y}{z}$ being present in $P$ and $Q$,
respectively.
}\label{Table:orientations:bounceII}
\end{table}

\FloatBarrier

\bibliographystyle{alphaurl}
\bibliography{bibliography}

\end{document}